\newtheorem{Proposition}{Proposition}
\newtheorem{Theorem}{Theorem}
\newtheorem{Lemma}{Lemma}
\newcommand{\dt}{\,\partial_t\, }
\newcommand{\dx}{\,\partial_x\, }
\newcommand{\Dt}{\Delta t}
\newcommand{\Dx}{\Delta x}
\newcommand{\Dv}{\Delta v}
\newcommand{\dxx}{\,\partial_{xx}\, }
\newcommand{\dd}{\,\mathrm{d}}
\newcommand{\R}{\mathbb{R}}
\newcommand{\T}{\mathbb{T}}
\newcommand{\TT}{\mathsf{T}}
\newcommand{\II}{\mathcal{I}}
\newcommand{\J}{\mathcal{J}}
\newcommand{\ft}{\widetilde{f}}
\newcommand{\Ft}{{\widetilde{F}}}
\newcommand{\rhot}{\widetilde{\rho}}
\newcommand{\Jt}{\widetilde{J}}
\newcommand{\St}{\widetilde{S}}
\newcommand{\fluxFt}{\widetilde{\mathcal{F}}}
\newcommand{\fluxGt}{\widetilde{\mathcal{G}}}
\newcommand{\Ftbf}{\mathbf{\Ft}}
\newcommand{\Fbf}{\mathbf{F}}
\newcommand{\Gbf}{\mathbf{G}}
\newcommand{\Jbf}{\mathbf{J}}
\newcommand{\Sbf}{\mathbf{S}}
\newcommand{\Phibf}{\boldsymbol{\Phi}}
\newcommand{\rhobf}{\boldsymbol{\rho}}
\newcommand{\rhotbf}{\boldsymbol{\rhot}}
\newcommand{\Jtbf}{\mathbf{\Jt}}
\newcommand{\Stbf}{\mathbf{\St}}
\newcommand{\e}{\,\mathrm{e}}
\newcommand{\iph}{{i+\frac{1}{2}}}
\newcommand{\imh}{{i-\frac{1}{2}}}
\newcommand{\jph}{{j+\frac{1}{2}}}
\newcommand{\jmh}{{j-\frac{1}{2}}}
\newcommand{\inv}{^{-1}}
\newcommand{\rhoinfs}{\rho^{\infty,*}}
\newcommand{\tin}{{\mathrm{in}}}
\newcommand{\lla}{\left\langle}
\newcommand{\rra}{\right\rangle}
\newcommand{\lbr}{\left\lbrace}
\newcommand{\rbr}{\right\rbrace}
\newcommand{\overbar}[1]{\mkern 1.5mu\overline{\mkern-1.5mu#1\mkern-1.5mu}\mkern 1.5mu}
\numberwithin{equation}{section}
\title[Entropy-dissipating scheme for a nonlinear kinetic model]{Discrete $H$-theorem for a finite volume discretization of a nonlinear kinetic system: application to hypocoercivity}
\author{Marianne Bessemoulin-Chatard}
\address[Marianne Bessemoulin-Chatard]{Nantes Université, CNRS, Laboratoire de Mathématiques Jean Leray, LMJL,
UMR 6629, F-44000 Nantes, France}
\author{Tino Laidin}
\address[Tino Laidin]{Univ Brest, CNRS UMR 6205, Laboratoire de Mathématiques de Bretagne Atlantique, F-29200 Brest, France}
\author{Thomas Rey}
\address[Thomas Rey]{Université Côte d’Azur, CNRS, LJAD, Parc Valrose, F-06108 Nice, France}
\email{marianne.bessemoulin@univ-nantes.fr}
\email{tino.laidin@univ-brest.fr}
\email{thomas.rey@univ-cotedazur.fr}
\begin{document}

\begin{abstract}
    In this article, we study the long-time behavior of a finite-volume discretization for a nonlinear kinetic reaction model involving two interacting species. Building upon the seminal work of [Favre, Pirner, Schmeiser, ARMA, 2023], we extend the discrete exponential convergence to equilibrium result established in [Bessemoulin-Chatard, Laidin, Rey, IMAJNA, 2025], which was obtained in a perturbative framework using weighted $L^2$ estimates. The analysis applies to a broader class of exponentially decaying initial data, without requiring proximity to equilibrium, by exploiting the properties of the Boltzmann entropy. The proof relies on the propagation of the initial $L^\infty$ bounds, derived from monotonicity properties of the scheme, allowing controlled linearizations within the nonlinear entropy estimates. Moreover, we show that the time-discrete dissipation inherent to the numerical scheme plays a crucial stabilizing role, providing control over the nonlinear terms.
    
    \textsc{2020 Mathematics Subject Classification:} 82B40, %Kinetic theory of gases
    65M08, %Finite volume methods
    65M12. %Stability and convergence of num. methods
\end{abstract}

% \begin{abstract}
%     Dans cet article, nous étudions le comportement en temps long d’une discrétisation volumes finis pour un modèle cinétique de réaction non linéaire faisant intervenir deux espèces en interaction. En nous appuyant sur le travail de [Favre, Pirner, Schmeiser, ARMA, 2023], nous étendons le résultat de convergence exponentielle discret vers l’équilibre, établi dans [Bessemoulin-Chatard, Laidin, Rey, IMAJNA, 2025], obtenu dans un cadre perturbatif à l’aide d’estimations en norme $L^2$ pondéré. L’analyse s’applique à une classe plus large de données initiales décroissant exponentiellement, sans imposer de proximité à l’équilibre, en exploitant les propriétés de l’entropie de Boltzmann. La preuve repose sur la propagation des bornes $L^\infty$, obtenues grâce aux propriétés de monotonie du schéma, permettant des linéarisations contrôlées au sein des estimations d’entropie non linéaires. De plus, nous montrons que la dissipation discrète en temps inhérente au schéma numérique joue un rôle stabilisant crucial, fournissant un contrôle des termes non linéaires.
% \end{abstract}

\keywords{Kinetic equations, hypocoercivity, maximum principle, finite volume methods, large time behavior}

\maketitle

\section{Introduction}
This work investigates the long-time behavior of an implicit-in-time finite-volume discretization for a nonlinear kinetic reaction model describing two interacting species in one-dimensional position–velocity phase space. We focus on the following system introduced in~\cite{NeumannSchmeiser2016}
\begin{align}
    &\dt f_1 + v\dx f_1 = \chi_1 - \rho_2 f_1, \label{eq_1_nonlin}\\
    &\dt f_2 + v\dx f_2 = \chi_2 - \rho_1 f_2, \label{eq_2_nonlin}
\end{align}
where $f_1$ and $f_2$ denote the phase-space densities of two chemical reactants, A and B. These species are produced by the decomposition of a third substance C (whose density is assumed to be constant) with nonnegative velocity profiles $\chi_1$ and $\chi_2$, and can also recombine to form C, thus leaving the system. The functions $f_1$ and $f_2$ depend on time $t \ge 0$, position $x \in \T$ (the one-dimensional torus), and velocity $v \in \R$. The reaction probability depends on the spatial density of the reaction partner, defined by
\begin{equation}\label{def_densities}
    \rho_{k}(t,x)\coloneq\int_\R f_k(t,x,v)\,\dd v, \qquad k=1,\,2.
\end{equation}
The system \eqref{eq_1_nonlin}--\eqref{eq_2_nonlin} is complemented with the initial condition
\begin{equation}\label{CI}
    f_1(0,x,v)=f_1^\tin(x,v)\geq 0,\quad f_2(0,x,v)=f_2^\tin(x,v)\geq 0.
\end{equation}
Since the chemical reaction is assumed to be reversible, we impose the compatibility condition $\int_\R (\chi_1 -\chi_2)\,\dd v=0$. Moreover, we make the following standard assumptions on the velocity profiles $\chi_k$, which are given positive functions of $v$. For $k=1,\,2$, we require
\begin{equation}\label{hyp_chi}
    \begin{gathered}
        \int_\R \chi_k\,\dd v=1,\qquad \int_\R v\,\chi_k\,\dd v=0,\\
        D_k\coloneq\int_\R v^2\,\chi_k\,\dd v<\infty,\quad Q_k\coloneq\int_\R v^4\,\chi_k\,\dd v<\infty.
    \end{gathered}
\end{equation}
In \cite{FavrePirnerSchmeiser2023}, additional linear relaxation terms of the form $\rho_k \chi_k - f_k$, $k=1,\,2$ were introduced to model thermalization effects relevant in semiconductor physics. While such relaxation terms could also be included within our analysis, the present work focuses on the nonlinear reaction terms in \eqref{eq_1_nonlin}–\eqref{eq_2_nonlin}, which are sufficient to establish exponential convergence toward equilibrium.

Under these assumptions, the model satisfies a natural conservation property reflecting the reversible nature of the chemical reaction:
\begin{equation*}
    \frac{\dd}{\dd t}\int_{\T\times\R}(f_1-f_2)\,\dd v\,\dd x=0.
\end{equation*}
This motivates the introduction of the unique constant $\rho^\infty>0$ such that
\begin{equation}\label{def_rhoinf}
    \int_{\T\times\R}(f_1^\tin-f_2^\tin)\,\dd v\,\dd x=|\T|\left(\rho^\infty-\frac{1}{\rho^\infty}\right),
\end{equation}
and of the corresponding equilibrium state $\Fbf^\infty=(f_1^\infty,f_2^\infty)^\top$, depending only on velocity, defined by
\begin{equation}\label{def_eq}
    f_1^\infty(x,v)\coloneq\rho_1^\infty\,\chi_1(v),\qquad f_2^\infty(x,v)\coloneq\rho_2^\infty\,\chi_2(v),
\end{equation}
with $\rho_1^\infty = \rho^\infty$ and $\rho_2^\infty = 1/\rho^\infty$.

The exponential convergence to equilibrium for the solution of \eqref{eq_1_nonlin}–\eqref{eq_2_nonlin} was first established in \cite{NeumannSchmeiser2016}. This result relies on a perturbative approach around equilibrium: a hypocoercivity result for the linearized system is first obtained in a weighted $L^2$ framework, following the method of \cite{DolbeaultMouhotSchmeiser2015}, and then extended to the nonlinear system through the propagation of $L^\infty$ bounds. More recently, the authors of \cite{FavrePirnerSchmeiser2023} removed the small-perturbation assumption by exploiting the nonlinear Boltzmann entropy of the system, allowing the treatment of more general initial data that satisfy weaker conditions relative to the equilibrium.

Motivated by these developments, we observed that the Boltzmann entropy $H(\Fbf)$ of the solution to the numerical scheme studied in \cite{BCLR2025}---for which exponential convergence to equilibrium was proven in a perturbative framework---also exhibits exponential decay in time (see Figure~\ref{fig:decayEntropy}). The goal of the present work is to provide a rigorous proof of this decay by adapting the approach of \cite{FavrePirnerSchmeiser2023} to the discrete setting. This extends our previous results to initial data that are not necessarily close to equilibrium, requiring only suitable $L^\infty$ bounds relative to the equilibrium profiles.
\begin{figure}
    \centering
    \includegraphics[width=.6\linewidth]{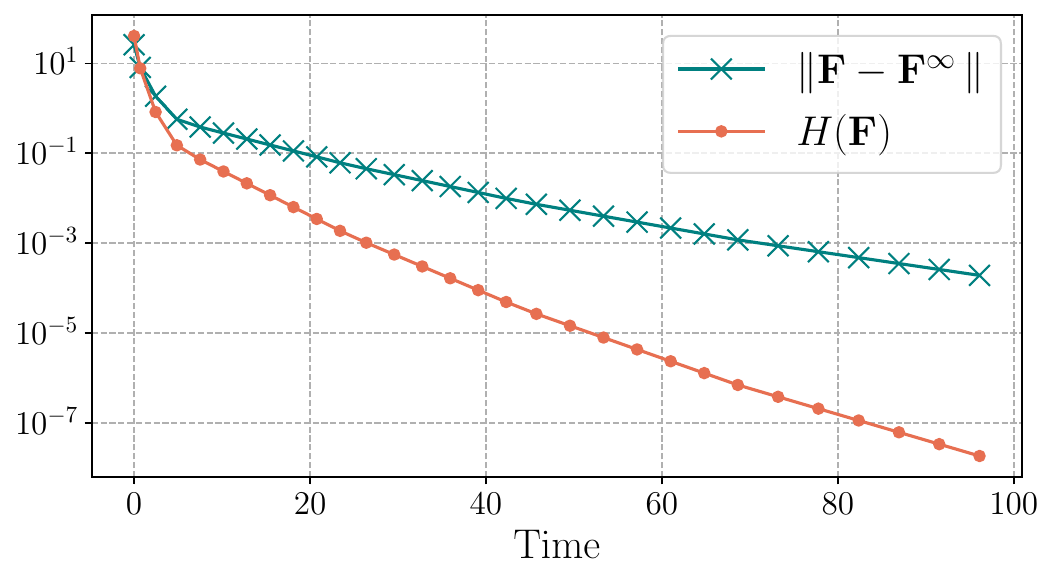}
    \caption{Comparison between the exponential decay of the relative Boltzmann entropy $H$ and of the weighted $L^2$ distance to equilibrium for the numerical scheme studied in \cite{BCLR2025}.}
    \label{fig:decayEntropy}
\end{figure}

% Domain : 
%   $x\in[0,x^*]$ with $^*=\pi$
%   $v\in[-v^*,v^*]$ with $v^*=12$
%   $Nx = 51$, $Nv = 60$
% Initial data : 
%   $f_1^\tin(x,v) = Chi_1(v)v^4(1+0.5\cos\left(\frac{2\pix}{x^*}/\right))$
%   $f_2^\tin(x,v) = Chi_2(v)v^4(1+0.3\cos\left(\frac{6\pix}{x^*}/\right))$
%   Chi_1, Chi_2 gaussians

The long-time dynamics of \eqref{eq_1_nonlin}--\eqref{eq_2_nonlin} results from the interplay between the transport operator $v\dx$ and the reaction term, which induces a mixing effect in phase space. Under spatial periodic boundary conditions, this mixing drives the system toward the position-independent equilibrium $\Fbf^\infty$. The study of the long-time asymptotics in kinetic equations is a classical topic, dating back to the seminal work \cite{Hoermander1967} on hypoellipticity for linear operators. Early results on the asymptotic behavior of kinetic equations were obtained in \cite{BensoussanLionsPapanicolaou1979}. More recently, for a broad class of collision operators \cite{HerauNier2004, Herau2017, DolbeaultMouhotSchmeiser2015, BouinDolbeaultMischlerMouhotSchmeiser2020}, it has been shown that, in a suitable norm, solutions converge exponentially fast toward equilibrium: there exist constants $\kappa > 0$ and $C > 1$ such that
\begin{equation}
    \left\|\Fbf(t) - \Fbf^\infty\right\|_{\mathcal{X}}
    \le C\left\|\Fbf^{\tin} - \Fbf^\infty\right\|_{\mathcal{X}}
    \e^{-\kappa t},
\end{equation}
for an appropriate Hilbert space $\mathcal{X}$. This property is known as \emph{hypocoercivity}, following \cite{VillaniHypo2009}, whereas the special case $C = 1$ corresponds to the classical notion of coercivity.

Building on the general framework of hypocoercivity, the seminal work \cite{DolbeaultMouhotSchmeiser2015} introduced a robust and versatile method to establish exponential convergence to equilibrium. The core idea is to construct a \emph{modified entropy functional}, equivalent to a weighted $L^2$-norm, for which exponential decay can be proven. This approach has since been successfully applied in a variety of kinetic contexts, including chemotaxis models \cite{CalvezRaoulSchmeiser2015}, the fractional Fokker–Planck equation \cite{BouinDolbeaultLafleche2022}, the Vlasov–Fokker–Planck equation \cite{BouinDolbeaultZiviani2023, BouinZiviani2025}, and the Vlasov–Poisson–Fok\-ker–Planck system \cite{Addala2021}. Extension to control problems in plasma theory, using refined estimates on the trajectories of the solutions, was also considered in \cite{HeHuMo2022}.

More recently, in \cite{FavrePirnerSchmeiser2023}, the authors adapted this strategy to the nonlinear framework. Indeed, instead of relying on the evolution of a weighted $L^2$-norm, they construct a modified entropy functional directly from the nonlinear Boltzmann entropy, which can be shown to decay exponentially. Under an $L^\infty$ assumption on the initial data relative to the equilibrium profiles, the logarithmic terms appearing in the Boltzmann entropy can be carefully linearized, thereby quantifying the distance to equilibrium in the standard weighted $L^2$-norm.

The entropy-based and hypocoercive frameworks developed at the continuous level have also inspired a broad range of studies on their discrete counterparts, aiming to understand how numerical schemes reproduce long-time asymptotics. Over the past decades, the long-time behavior of discretizations of macroscopic models has been extensively studied using energy-based techniques. These methods rely on preserving the entropic structure of the continuous equations at the discrete level \cite{GosseToscani2006, ChainaisFilbet2007, BurgerCarrilloWolfram2010, BessemoulinChatardFilbet2012}. Among them, the $\phi$-entropy method has proven to be a particularly robust tool for analyzing asymptotic behavior \cite{FilbetHerda2017, ChainaisHillairetHerda2020}. The use of $\phi$-entropies was later extended to linear kinetic equations, such as the Fokker–Planck equation \cite{DolbeaultLi2018} and the linear relaxation model \cite{Evans2021}, demonstrating the effectiveness of nonlinear techniques in deriving quantitative convergence rates toward equilibrium.

These ideas naturally extend to the discretization of kinetic equations, allowing numerical schemes to accurately capture long-time asymptotics. Early developments in this direction focused on the Kolmogorov equation, where collisions are modelled by a Laplacian in velocity. A finite-difference scheme was analyzed in \cite{PorrettaZuazua2017}, while finite-element approaches were investigated in \cite{Georgoulis2021, ZhaonanGeorgoulisHerbert2025pp}. These discrete hypocoercivity results led to the design of an efficient stabilization strategy for finite-element methods, as developed in \cite{DongEmmanuilGeorgoulisHerbert2025}. The original $H^1$-hypocoercivity framework “à la Villani” \cite{VillaniHypo2009} was first adapted to the discrete setting through finite-difference discretizations of the kinetic Fokker–Planck equation \cite{DujardinHerauLaffitte2018}. Subsequently, the robust weighted $L^2$ approach of \cite{DolbeaultMouhotSchmeiser2015} was extended to various discrete schemes for linear kinetic models. A discrete hypocoercivity property was first established for finite-volume discretizations of the Fokker–Planck and linear BGK equations in \cite{BessemoulinHerdaRey2020}, and later generalized to the nonlinear model \eqref{eq_1_nonlin}–\eqref{eq_2_nonlin} within a perturbative regime in \cite{BCLR2025}. Long-time asymptotics for a finite-volume scheme of the fractional Fokker–Planck equation were also sudied in \cite{AyiHerdaHivertTristani2023}. Finally, this methodology has been applied to collisional Vlasov equations—both with external potentials and for the linearized Vlasov–Poisson system—via spectral decompositions based on Hermite polynomials \cite{BlausteinFilbet2024_1, BlausteinFilbet2024_2}. We also mention the recent work \cite{Grosse2025}, which employs a fully spectral phase-space decomposition with a linearized BGK operator preserving the first three moments in an unbounded domain with a confining potential. This approach follows the analysis of \cite{CarrapatosoDolbeaultHerauMischlerMouhotSchmeiser2021}, where convergence toward specific macroscopic modes (both stationary and time-periodic) was rigorously established.

\medskip
\textbf{Outline of the paper.}
Section~\ref{sec:ContinuousSetting} recall the main arguments of the continuous analysis, reformulated in a form suitable for the discrete framework. Section~\ref{sec:DiscreteSetting} introduces the numerical scheme, together with its discrete maximum principle and an existence result. The maximum principle follows from a monotonicity property of the chosen upwind numerical fluxes. Finally, Section~\ref{sec:Hypoco} establishes the exponential convergence of the discrete solution toward equilibrium by adapting the continuous proof. In this section, we focus on the specific challenges introduced by the discretization---such as the treatment of additional terms, the lack of a discrete chain rule, and the impact of time discretization---while the algebraic computations from the continuous setting carry over directly and require only a brief discussion.

\section{The continuous setting}\label{sec:ContinuousSetting}
This works builds on and extends our previous contribution \cite{BCLR2025}. In particular, it relies on the recent work \cite{FavrePirnerSchmeiser2023}, adopting a fully nonlinear approach rather than relying on a linearization of the model.  For clarity and readability, we present here a condensed version of the proof from \cite{FavrePirnerSchmeiser2023}, reformulated in a way that facilitates its adaptation to the discrete setting.

First of all, let us recall the global existence result and maximum principle estimates stated in \cite[Theorem 1]{FavrePirnerSchmeiser2023}.
\begin{Proposition}[Global existence and maximum principle]
    Under assumptions \eqref{hyp_chi} on $\chi_k$, we further assume that the initial data $\Fbf^{in}$ satisfies for all $x\in\T$, $v\in\R$, 
    \begin{align}
        &\rho_m\chi_1(v)\leq f_1^{in}(x,v)\leq \rho_M\chi_1(v), \label{hyp.CI.1}\\
        &\rho_M^{-1}\chi_2(v)\leq f_2^{in}(x,v)\leq \rho_m^{-1}\chi_2(v), \label{hyp.CI.2}
    \end{align}
    with constants $0<\rho_m\leq \rho_M<\infty$.
    
    Then there exists a unique global solution $\Fbf$ of \eqref{eq_1_nonlin}--\eqref{eq_2_nonlin} such that for all $x\in\T$, $v\in\R$, $t\geq 0$,
    \begin{align}
        &\rho_m\chi_1(v)\leq f_1(t,x,v)\leq \rho_M\chi_1(v), \label{estimLinf_f1}\\
        &\rho_M^{-1}\chi_2(v)\leq f_2(t,x,v)\leq \rho_m^{-1}\chi_2(v). \label{estimLinf_f2}
    \end{align}
\end{Proposition}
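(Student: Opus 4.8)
The plan is to construct the solution via a fixed-point iteration and to propagate the $L^\infty$ bounds along the iteration using the fact that, once $\rho_2$ (resp. $\rho_1$) is regarded as a known coefficient, equations \eqref{eq_1_nonlin}--\eqref{eq_2_nonlin} become linear transport equations with a nonnegative source and a nonnegative absorption coefficient, for which the maximum principle and Duhamel representation along characteristics are explicit. More precisely, I would first observe that the bounds \eqref{hyp.CI.1}--\eqref{hyp.CI.2} on the initial data translate, after integration in $v$ using $\int_\R\chi_k\dd v=1$, into the density bounds $\rho_m\le\rho_1^{in}(x)\le\rho_M$ and $\rho_M^{-1}\le\rho_2^{in}(x)\le\rho_m^{-1}$. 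The key algebraic point is that the equilibrium profiles $\rho_m\chi_1$ and $\rho_M\chi_1$ are \emph{stationary supersolutions/subsolutions} of the $f_1$-equation \emph{provided} the coefficient $\rho_2$ stays in $[\rho_M^{-1},\rho_m^{-1}]$: indeed $\chi_1-\rho_2(\rho_m\chi_1)=(1-\rho_2\rho_m)\chi_1\ge 0$ when $\rho_2\le\rho_m^{-1}$, and symmetrically $\chi_1-\rho_2(\rho_M\chi_1)\le 0$ when $\rho_2\ge\rho_M^{-1}$. The same computation with indices swapped shows $\rho_M^{-1}\chi_2,\rho_m^{-1}\chi_2$ are sub/supersolutions of the $f_2$-equation when $\rho_1\in[\rho_m,\rho_M]$. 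Thus the bounds \eqref{estimLinf_f1}--\eqref{estimLinf_f2} are \emph{consistent} with, and stable under, the natural iteration.

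Concretely, I would define the iteration $\Fbf^{(0)}\equiv\Fbf^\infty$ (or $\equiv\Fbf^{in}$) and, given $\Fbf^{(n)}=(f_1^{(n)},f_2^{(n)})$ with densities $\rho_k^{(n)}$, let $f_1^{(n+1)}$ solve $\dt f_1^{(n+1)}+v\dx f_1^{(n+1)}=\chi_1-\rho_2^{(n)}f_1^{(n+1)}$ and $f_2^{(n+1)}$ solve $\dt f_2^{(n+1)}+v\dx f_2^{(n+1)}=\chi_2-\rho_1^{(n)}f_2^{(n+1)}$, each with the prescribed initial datum. By induction, if $\rho_1^{(n)}\in[\rho_m,\rho_M]$ and $\rho_2^{(n)}\in[\rho_M^{-1},\rho_m^{-1}]$ pointwise, then the comparison principle for the linear equation (equivalently, Duhamel along characteristics $x\mapsto x-vt$, where the absorption factor $\exp(-\int_0^t\rho_2^{(n)}\dd s)$ is positive and the source is nonnegative) gives $\rho_m\chi_1\le f_1^{(n+1)}\le\rho_M\chi_1$ and similarly for $f_2^{(n+1)}$; integrating in $v$ restores the density bounds at level $n+1$, closing the induction. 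Nonnegativity of all iterates is immediate from the same representation. Passing to the limit: I would show the iteration is a contraction on a short time interval $[0,\tau]$ in, say, $L^\infty_{x,v}$ weighted by $\chi_k^{-1}$ (or in a weighted $L^1$ norm controlling the densities), using that the maps $\rho\mapsto$ (solution of the linear transport–absorption equation) are Lipschitz with a constant depending only on $\rho_m,\rho_M,\tau$ thanks to the uniform bounds already established; the uniform-in-$n$ $L^\infty$ bounds guarantee the contraction constant does not degenerate, so the fixed point exists on $[0,\tau]$, satisfies the stated bounds, and the argument iterates on consecutive intervals of fixed length $\tau$ to yield a global solution. Uniqueness follows from the same Lipschitz estimate applied to the difference of two solutions with the same data.

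The main obstacle I anticipate is making the fixed-point functional-analytic setup clean in the velocity variable: since $v\in\R$ and $\chi_k$ merely have finite fourth moments, one must carry the $\chi_k$-weighted norms throughout so that the densities are well-defined and the absorption coefficients $\rho_k$ are genuinely bounded; the unbounded velocities also mean characteristics move arbitrarily fast, but because we work on the torus in $x$ this causes no loss. The second, more delicate point is the self-consistency of the bound: one needs the \emph{a priori} estimate $\rho_k\in[\rho_m,\rho_M]^{\pm1}$ to hold not just for the iterates but to be preserved in the limit, which is why it is essential that the bounds \eqref{estimLinf_f1}--\eqref{estimLinf_f2} are built into the iteration from the start rather than recovered afterwards. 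Everything else—existence for the linear transport–absorption equation, continuous dependence, and the Grönwall-type uniqueness estimate—is standard, and since this Proposition is quoted verbatim from \cite[Theorem 1]{FavrePirnerSchmeiser2023}, I would in the paper simply recall this structure and refer there for the details.
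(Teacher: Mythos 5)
The paper does not actually prove this Proposition: it is recalled verbatim from \cite[Theorem 1]{FavrePirnerSchmeiser2023}, so there is no in-paper argument to compare against line by line. That said, your sketch is a correct reconstruction of the standard proof, and your central algebraic observation is exactly right: $\rho_m\chi_1$ and $\rho_M\chi_1$ are stationary sub- and supersolutions of the $f_1$-equation as long as $\rho_2\in[\rho_M^{-1},\rho_m^{-1}]$, and symmetrically for $f_2$, so the decoupled linear iteration propagates \eqref{hyp.CI.1}--\eqref{hyp.CI.2} by Duhamel along characteristics, and a short-time contraction in a $\chi_k^{-1}$-weighted $L^\infty$ norm (with constants depending only on $\rho_m,\rho_M,\tau$, hence iterable on intervals of fixed length) yields global existence and uniqueness. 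It is worth noting that your argument is the exact continuous analogue of the discrete existence proof the paper \emph{does} give in Appendix~\ref{app:ExistenceMaxPrinciple}: there, the comparison-along-characteristics step is replaced by a truncation of the nonlinearity $\overbar{\rho}_{l}\overbar{f}_{k}$ combined with an evaluation at the extremal cell and the monotonicity of the upwind flux, and the fixed-point step is replaced by a Brouwer-type argument (Lemma~\ref{lem_existence_Navier}) rather than a contraction. The truncation device buys existence without needing a contraction constant, at the price of a separate lemma showing the truncation is inactive; your iteration buys uniqueness for free from the same Lipschitz estimate. Either route is legitimate; for the purposes of this paper a citation to \cite{FavrePirnerSchmeiser2023} suffices, as you note.
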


\subsection{Notations and functional setting} 
We now introduce the notations and functional framework that will be used throughout the analysis, providing the tools needed to handle both the continuous and discrete settings.

The decay towards equilibrium will be estimated quantitatively in the following weighted $L^2$ space
\[ \mathcal{X}\coloneq L^2(\T \times\R,\dd x\dd v/f_1^\infty)\times L^2(\T\times\R,\dd x\dd v/f_2^\infty),  \]
endowed with the scalar product
\[ \lla \Fbf,\Gbf\rra = \int_{\T\times\R} \left(\frac{f_1g_1}{f_1^\infty}+\frac{f_2g_2}{f_2^\infty}\right)\dd x \dd v,\quad\forall\, \Fbf=(f_1,f_2)^\top,\,\Gbf= (g_1,g_2)^\top\in\mathcal{X}.\]
The corresponding norm is denoted by $\|\cdot\|$.

We also introduce the space
\[ L^2(\T, \dd x/\rho_1^\infty)\times L^2(\T, \dd x/\rho_2^\infty),  \]
equipped with the scalar product $\lla \cdot,\cdot\rra_x$ and the associated norm $\|\cdot\|_x$.

We define the orthogonal projection onto the velocity equilibrium by
\[\Pi \Fbf\coloneq \begin{pmatrix}\rho_1\,\chi_1 \\ \rho_2\,\chi_2\end{pmatrix},\]
where $\rho_k$ is defined by~\eqref{def_densities}.

To study the convergence of $\Fbf(t)$ toward $\Fbf^\infty$ as $t$ goes to $\infty$, we introduce the deviation $\Ftbf(t)\coloneq \Fbf(t)-\Fbf^\infty$ and rewrite the system  \eqref{eq_1_nonlin}--\eqref{eq_2_nonlin} as
\begin{align}
    &\dt \ft_1 + v\dx \ft_1 = \chi_1 - \rho_2 f_1, \label{eq_1tilde_nonlin}\\
    &\dt \ft_2 + v\dx \ft_2 = \chi_2 - \rho_1 f_2. \label{eq_2tilde_nonlin}
\end{align}

\subsection{Moment dynamics and estimates}
To analyze the long-time behavior of the system, it is useful to consider its macroscopic quantities \cite{DolbeaultMouhotSchmeiser2015}. These moments capture essential features of the dynamics and play a key role in establishing coercivity properties. We now define these moments and provide corresponding estimates with respect to the norm $\|\cdot\|$. The moments of $\Fbf=\left(f_1, f_2\right)^\top$ are defined as
\begin{align}
 &   \rhobf_\Fbf\coloneq (\rho_1,\rho_2)^\top,\quad \text{with } \rho_k=\int_\R f_k\dd v, \label{def.rhoF}\\
 &   \Jbf_\Fbf\coloneq (J_1,J_2)^\top,\quad \text{with } J_k=\int_\R v\,f_k\dd v, \label{def.JF}\\
 &   \Sbf_\Fbf\coloneq (S_1,S_2)^\top,\quad \text{with } S_k=\int_\R (v^2-D_k)f_k\dd v, \label{def.SF}
\end{align}
$D_k$ being the second-order moment of $\chi_k$, defined in \eqref{hyp_chi}. Let us remark that with these notations,
\[ 
\rhotbf\coloneq\rhobf_\Ftbf=\rhobf_\Fbf-\rhobf^\infty,\quad \Jtbf\coloneq\Jbf_\Ftbf=\Jbf_\Fbf,\quad \Stbf\coloneq\Sbf_\Ftbf=\Sbf_\Fbf.
\]
By multiplying \eqref{eq_1tilde_nonlin} and \eqref{eq_2tilde_nonlin} by $(1,v)^\top$ and integrating with respect to $v$, we get the following moment equations for $\Ftbf$:
\begin{equation}\label{eq.rhoFt}
    \dt \rhotbf+\dx \Jtbf=(1-\rho_1\rho_2)\left(\begin{matrix}1\\ 1\end{matrix}\right),
\end{equation}
and 
\begin{align}
    \dt \Jt_1+\dx \St_1+D_1\dx \rhot_1=-\rho_2J_1,\label{eq.Jt1}\\
    \dt \Jt_2+\dx \St_2+D_2\dx \rhot_2=-\rho_1J_2. \label{eq.Jt2}
\end{align}
The following lemma provides estimates, which are essential for controlling the macroscopic contributions in the subsequent hypocoercivity analysis.

\begin{Lemma}[Moments estimates]
\label{lem_mom_estim}
Under assumptions \eqref{hyp_chi}, the moments satisfy the following estimates
\begin{align}
    & \|\rhotbf\|_x=\|\Pi \Ftbf\|, \label{estim.rhoFt}\\
    & \|\Jtbf\|_x=\|\Jbf_\Fbf\|_x\leq C_1 \,\|(I-\Pi)\Ftbf\|, \label{estim.JFt}\\
    & \|\Stbf\|_x=\|\Sbf_\Fbf\|_x\leq C_2 \,\|(I-\Pi)\Ftbf\|, \label{estim.SFt}
\end{align}
where the constants are given by
\begin{align}
    & C_1 = \sqrt{\max(D_1,D_2)}, \label{def.C1}\\
    & C_2 = \sqrt{\max(Q_1-D_1^2,Q_2-D_2^2)}. \label{def.C2}
\end{align}
\end{Lemma}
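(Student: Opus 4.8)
The plan is to compute each of the three identities/estimates by direct integration against the velocity variable, using the Gaussian-type weights $1/f_k^\infty = 1/(\rho_k^\infty\chi_k)$ that define the norm $\|\cdot\|$. For the first identity \eqref{estim.rhoFt}, I would note that $\Pi\Ftbf = (\rhot_1\chi_1,\rhot_2\chi_2)^\top$, so that
\[
\|\Pi\Ftbf\|^2 = \sum_{k=1}^2 \int_{\T\times\R} \frac{(\rhot_k\chi_k)^2}{\rho_k^\infty\chi_k}\,\dd x\,\dd v = \sum_{k=1}^2 \frac{1}{\rho_k^\infty}\int_\T \rhot_k^2\left(\int_\R \chi_k\,\dd v\right)\dd x,
\]
and the normalization $\int_\R\chi_k\,\dd v = 1$ from \eqref{hyp_chi} collapses this to $\sum_k \frac{1}{\rho_k^\infty}\int_\T\rhot_k^2\,\dd x = \|\rhotbf\|_x^2$. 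This requires no inequality, only the first assumption in \eqref{hyp_chi}.

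For \eqref{estim.JFt} and \eqref{estim.SFt}, the strategy is to exploit that $\Jbf_\Fbf = \Jbf_\Ftbf = \Jbf_{(I-\Pi)\Ftbf}$ and $\Sbf_\Fbf = \Sbf_{(I-\Pi)\Ftbf}$, since both $\Pi\Ftbf$ and $\Fbf^\infty$ are even-in-$v$ (or, more precisely, have vanishing first and appropriate second moments because $\int_\R v\chi_k\,\dd v = 0$ and $\int_\R(v^2-D_k)\chi_k\,\dd v = 0$). Writing $g_k \coloneq ((I-\Pi)\Ftbf)_k$, I would apply Cauchy--Schwarz in $v$ with the weight $1/(\rho_k^\infty\chi_k)$: for instance,
\[
J_k^2 = \left(\int_\R v\, g_k\,\dd v\right)^2 = \left(\int_\R \frac{g_k}{\sqrt{\rho_k^\infty\chi_k}}\cdot v\sqrt{\rho_k^\infty\chi_k}\,\dd v\right)^2 \le \left(\int_\R \frac{g_k^2}{\rho_k^\infty\chi_k}\,\dd v\right)\left(\rho_k^\infty\int_\R v^2\chi_k\,\dd v\right),
\]
and $\int_\R v^2\chi_k\,\dd v = D_k$. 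Integrating in $x$ and summing over $k$ yields $\|\Jtbf\|_x^2 \le \max(D_1,D_2)\,\|(I-\Pi)\Ftbf\|^2$, which is \eqref{estim.JFt} with $C_1$ as in \eqref{def.C1}. The same computation with $v$ replaced by $(v^2-D_k)$ gives the weight factor $\rho_k^\infty\int_\R(v^2-D_k)^2\chi_k\,\dd v = \rho_k^\infty(Q_k - D_k^2)$ after expanding and using $\int_\R v^2\chi_k = D_k$, $\int_\R v^4\chi_k = Q_k$; this produces \eqref{estim.SFt} with $C_2$ as in \eqref{def.C2}.

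The only subtle point — and the place I would be most careful — is justifying that one may replace $\Ftbf$ by $(I-\Pi)\Ftbf$ inside the moment integrals defining $\Jtbf$ and $\Stbf$; this is precisely where the moment-matching conditions $\int v\chi_k = 0$ and $\int(v^2-D_k)\chi_k = 0$ in \eqref{hyp_chi}, together with the definition of $\Sbf_\Fbf$ using the centered weight $v^2 - D_k$, are used. Concretely, $\Jbf_{\Pi\Ftbf}$ has components $\rhot_k\int_\R v\chi_k\,\dd v = 0$ and $\Sbf_{\Pi\Ftbf}$ has components $\rhot_k\int_\R(v^2-D_k)\chi_k\,\dd v = 0$, so indeed $\Jbf_\Ftbf = \Jbf_{(I-\Pi)\Ftbf}$ and $\Sbf_\Ftbf = \Sbf_{(I-\Pi)\Ftbf}$. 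Everything else is the routine Cauchy--Schwarz estimate above; there is no genuine obstacle, only bookkeeping of the moments of $\chi_k$.
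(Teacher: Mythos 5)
Your proof is correct and follows essentially the same route as the paper's: the normalization $\int\chi_k=1$ for the first identity, and insertion of $f_k-\rho_k\chi_k=((I-\Pi)\Ftbf)_k$ followed by a weighted Cauchy--Schwarz in $v$ for the other two, with $\int v\chi_k\,\dd v=0$ and $\int(v^2-D_k)\chi_k\,\dd v=0$ justifying the replacement and $\int(v^2-D_k)^2\chi_k\,\dd v=Q_k-D_k^2$ giving the constant $C_2$. No gaps.
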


\begin{proof}
We start by proving \eqref{estim.rhoFt}. Using the definition of $\Pi$ and the first property in  \eqref{hyp_chi}, one has
\begin{equation*}
    \|\Pi \Ftbf\|^2= \int_{\T\times \R}\left[ \frac{(\rhot_1\chi_1)^2}{\chi_1\rho_1^\infty}+\frac{(\rhot_2\chi_2)^2}{\chi_2\rho_2^\infty} \right]\dd x \dd v=\int_{\T\times\R}\left[\frac{\rhot_1^2\chi_1}{\rho_1^\infty}+\frac{\rhot_2^2\chi_2}{\rho_2^\infty}\right]\dd x\dd v=\|\rhotbf\|^2_x.
\end{equation*}
Now for \eqref{estim.JFt}, we have by definition
\[\|\Jbf_\Fbf\|_x^2=\int_\T\sum_{k=1,2}\frac{1}{\rho_k^\infty}\left( \int_\R v\,f_k\,\dd v\right)^2\dd x.\]
Using the second property in \eqref{hyp_chi}, we get
\begin{align*}
    \left( \int_\R v\,f_k\,\dd v\right)^2&=\left( \int_\R v\,(f_k-\rho_k\chi_k)\frac{\sqrt{\chi_k}}{\sqrt{\chi_k}}\,\dd v\right)^2\\
    &\leq \left( \int_\R \frac{(f_k-\rho_k\chi_k)^2}{\chi_k}\dd v \right)\left(\int_\R v^2\chi_k\dd v \right),
\end{align*}
which yields the result using the definition of $D_k$ given in \eqref{hyp_chi}.

Finally, for \eqref{estim.SFt}, we have by definition
\[ \|\Sbf_\Fbf\|_x^2=\sum_{k=1,2}\int_\T\frac{1}{\rho_k^\infty}\left( \int_\R (v^2-D_k)f_k\,\dd v \right)^2\dd x.\]
Now, using the definition of $D_k$, we have $\int_\R (v^2-D_k)\rho_k\chi_k\dd v = 0$, which gives
\begin{align*}
    \left( \int_\R (v^2-D_k)f_k\,\dd v \right)^2&= \left( \int_\R (v^2-D_k)(f_k-\rho_k\chi_k)\frac{\sqrt{\chi_k}}{\sqrt{\chi_k}}\,\dd v \right)^2\\
    &\leq \left( \int_\R (v^2-D_k)^2\chi_k \dd v\right)\left(\int_\R \frac{(f_k-\rho_k\chi_k)^2}{\chi_k}\dd v \right),
\end{align*}
yielding the result using properties \eqref{hyp_chi} of $\chi_k$.
\end{proof}

\subsection{Boltzmann entropy dissipation}
We define the relative Boltzmann entropy of $\Fbf=(f_1,f_2)^\top$ with respect to the equilibrium $\Fbf^\infty=(f_1^\infty,f_2^\infty)^\top$ as
\begin{equation}\label{def.Boltzmann}
    H(\Fbf)\coloneq  \sum_{k=1,2}\int_{\T\times\R}\left( f_k\left[ \log\left( \frac{f_k}{f_k^\infty}\right)-1 \right] +f_k^\infty\right)\dd v\dd x.
\end{equation}
The first step of the analysis is to establish the following microscopic coercivity property, which provides dissipation a priori solely in the velocity variable.

\begin{Lemma}[Microscopic coercivity]\label{lem_microcoercivity}
Let \eqref{hyp_chi} hold and let $\Fbf=(f_1,f_2)^\top$ be the solution to the system \eqref{eq_1_nonlin}--\eqref{eq_2_nonlin} with initial data $\Fbf^{in}=(f_1^{in},f_2^{in})^\top \in \mathcal{X}$. Then for every $t\geq 0$,
\begin{equation}\label{estim.H_D}
    \frac{\dd}{\dd t}H(\Fbf)=-\mathcal{D}(\Fbf)\leq 0,
\end{equation}
where the dissipation is defined by
\begin{equation}\label{def.D}
    \mathcal{D}(\Fbf)\coloneq \int_{\T\times\R^2}(f_1f_2'-\chi_1\chi_2')\log\left( \frac{f_1f_2'}{\chi_1\chi_2'} \right)\dd v \dd v' \dd x.
\end{equation}
\end{Lemma}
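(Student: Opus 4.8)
The plan is to differentiate $H(\Fbf)$ in time and show that the transport part drops out, leaving only the contribution of the reaction terms, which assembles into the nonnegative quantity $\mathcal{D}(\Fbf)$. First I would compute, using the chain rule,
\[
\frac{\dd}{\dd t}H(\Fbf)=\sum_{k=1,2}\int_{\T\times\R}\log\!\left(\frac{f_k}{f_k^\infty}\right)\dt f_k\,\dd v\,\dd x,
\]
since $\partial_{f_k}\bigl(f_k[\log(f_k/f_k^\infty)-1]+f_k^\infty\bigr)=\log(f_k/f_k^\infty)$. Substituting the evolution equations \eqref{eq_1_nonlin}--\eqref{eq_2_nonlin}, the transport contribution is $-\int \log(f_k/f_k^\infty)\,v\dx f_k$. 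Here I would use that $f_k^\infty$ is independent of $x$, so $v\dx f_k=v\dx\bigl(f_k/f_k^\infty\bigr)f_k^\infty$ and, writing $\phi(s)=s\log s-s$, the integrand becomes $v\,f_k^\infty\,\dx\bigl(\phi(f_k/f_k^\infty)\bigr)$; integrating by parts in $x$ over the torus $\T$ (no boundary terms) and using that $v$ and $f_k^\infty$ do not depend on $x$, this term vanishes. Thus only the reaction terms survive.

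Next I would treat the reaction contribution. Using $\chi_1=\int_\R f_1^\infty\chi_2'\,\dd v'/\rho_2^\infty$ is not quite what is needed; rather, the key is to rewrite $\chi_k$ and $\rho_j f_k$ in a symmetric bilinear form over the doubled velocity variable. Concretely, $\rho_2 f_1=\int_\R f_1(v)f_2(v')\,\dd v'$ and, using $\int_\R\chi_2'\,\dd v'=1$ together with $f_1^\infty f_2^\infty{}'=\rho_1^\infty\rho_2^\infty\chi_1\chi_2'=\chi_1\chi_2'$ (since $\rho_1^\infty\rho_2^\infty=1$), one has $\chi_1=\int_\R\chi_1\chi_2'\,\dd v'$. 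Hence the reaction part of $\frac{\dd}{\dd t}H$ equals
\[
\int_{\T\times\R^2}\Bigl[\log\!\bigl(f_1/f_1^\infty\bigr)\bigl(\chi_1\chi_2'-f_1f_2'\bigr)+\log\!\bigl(f_2'/f_2^\infty{}'\bigr)\bigl(\chi_1\chi_2'-f_1f_2'\bigr)\Bigr]\dd v\,\dd v'\,\dd x,
\]
where the second term comes from the equation for $f_2$ after relabelling $v\leftrightarrow v'$ and using symmetry of the domain. Combining the two logarithms gives $\log\bigl(f_1f_2'/(f_1^\infty f_2^\infty{}')\bigr)=\log\bigl(f_1f_2'/(\chi_1\chi_2')\bigr)$, so the reaction contribution is precisely $-\mathcal{D}(\Fbf)$ with $\mathcal{D}$ as in \eqref{def.D}. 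Nonnegativity of $\mathcal{D}(\Fbf)$ then follows from the elementary inequality $(a-b)\log(a/b)\geq 0$ for $a,b>0$ applied pointwise with $a=f_1f_2'$, $b=\chi_1\chi_2'$, giving $\frac{\dd}{\dd t}H(\Fbf)=-\mathcal{D}(\Fbf)\leq 0$.

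The main obstacle is bookkeeping rather than conceptual: one must justify differentiation under the integral sign and the integration by parts, which requires the integrability afforded by the maximum principle bounds \eqref{estimLinf_f1}--\eqref{estimLinf_f2} (controlling $f_k/f_k^\infty$ from above and below, so that $\log(f_k/f_k^\infty)$ is bounded), together with the moment assumptions \eqref{hyp_chi} ensuring the $v$- and $v'$-integrals converge. I would also be slightly careful with the symmetrization step: it relies on the compatibility condition and on $\rho_1^\infty\rho_2^\infty=1$ to identify $f_1^\infty f_2^\infty{}'$ with $\chi_1\chi_2'$, which is exactly what makes the two logarithmic terms combine into the single symmetric expression defining $\mathcal{D}$.
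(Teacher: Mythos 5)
Your proposal is correct and follows essentially the same route as the paper: differentiate $H$, kill the transport term by writing it as a perfect $x$-derivative over the torus, double the velocity variable using $\int_\R\chi_k'\,\dd v'=1$ and $\rho_k=\int_\R f_k'\,\dd v'$, swap $v\leftrightarrow v'$ in the second species' contribution, and combine the logarithms via $\rho_1^\infty\rho_2^\infty=1$ so that $f_1^\infty f_2^{\infty\prime}=\chi_1\chi_2'$. The only cosmetic difference is that the paper keeps the factor $\log(\rho_1^\infty\rho_2^\infty)$ explicit before noting it vanishes, whereas you absorb it directly into the identification of $f_1^\infty f_2^{\infty\prime}$ with $\chi_1\chi_2'$; your closing remark on the $L^\infty$ bounds justifying the formal manipulations is a sensible addition not spelled out in the paper.
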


\begin{proof}
Since $\Fbf=(f_1,f_2)^\top$ satisfies \eqref{eq_1_nonlin}--\eqref{eq_2_nonlin}, we have
\begin{align*}
    \frac{\dd}{\dd t}H(\Fbf)&=\sum_{k=1,2}\int_{\T\times\R}\dt f_k\log\left( f_k/f_k^\infty\right)\dd v\dd x\\
    &=\int_{\T\times\R}(-v\dx f_1+\chi_1-\rho_2f_1)\log\left( f_1/f_1^\infty\right)\dd v\dd x\\
    &\,+ \int_{\T\times\R}(-v\dx f_2+\chi_2-\rho_1f_2)\log\left( f_2/f_2^\infty\right)\dd v\dd x.
\end{align*}
Performing an integration by parts with respect to $x$ and using the periodic boundary conditions, we remark that the transport terms vanish:
\begin{equation*}
    \begin{aligned}
        -\int_{\T\times\R}v\dx f_k\log\left( f_k/f_k^\infty\right)\dd v\dd x&=\int_{\T\times\R}vf_k\dx\left(\log\left( f_k/f_k^\infty\right)\right)\dd v\dd x\\ &=\int_{\T\times\R}v\dx f_k\dd v\dd x=0.
    \end{aligned}
\end{equation*}
It remains to treat the relaxation terms. To do so, we double the velocity variables using that $\int_\R \chi_k'\dd v'=1$ and $\rho_k=\int_\R f_k'\dd v'$:
\begin{gather*}
    \int_{\T\times\R}\left[(\chi_1-\rho_2f_1)\log\left( \frac{f_1}{\rho_1^\infty\chi_1}\right)+(\chi_2-\rho_1f_2)\log\left( \frac{f_2}{\rho_2^\infty\chi_2}\right)\right]\dd v\dd x=\\
    \int_{\T\times\R^2}\left[(\chi_1\chi_2'-f_2'f_1)\log\left( \frac{f_1}{\rho_1^\infty\chi_1}\right)+(\chi_2\chi_1'-f_1'f_2)\log\left( \frac{f_2}{\rho_2^\infty\chi_2}\right)\right]\dd v'\dd v\dd x.
\end{gather*}
By reversing the integration variables $v$ and $v'$ in the second term, we get
\[\frac{\dd}{\dd t}H(\Fbf)=\int_{\T\times\R^2}(\chi_1\chi_2'-f_1f_2')\left[\log\left(\frac{f_1f_2'}{\chi_1\chi_2'}\right)-\log(\rho_1^\infty\rho_2^\infty)\right]\dd v'\dd v\dd x,\]
which concludes the proof using that $\rho_1^\infty\rho_2^\infty=1$.
\end{proof}

We also require the following technical result to establish exponential convergence toward equilibrium. It connects the dissipation to the nonlinear reaction terms in the moment equations \eqref{eq.Jt1}--\eqref{eq.Jt2}, as well as to the dissipation in the direction of $I-\Pi$, which corresponds to dissipation in the velocity variable.

\begin{Lemma}\label{lem_estimD}
Under assumptions \eqref{hyp_chi} and \eqref{hyp.CI.1}--\eqref{hyp.CI.2}, the dissipation term defined by \eqref{def.D} satisfies
\begin{equation}\label{estim.D}
    \mathcal{D}(\Fbf)\geq C_3\|1-\rho_1\rho_2\|_{L^2(\T)}^2+C_4\|(I-\Pi)\Ftbf\|^2,
\end{equation}
where $C_3=\rho_m/\rho_M$ and $C_4=(\rho_m/\rho_M)(\rho_1^\infty+\rho_2^\infty)^{-1}\min(\rho_m,\rho_M^{-1})^2$.
\end{Lemma}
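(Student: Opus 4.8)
The plan is to reduce $\mathcal{D}(\Fbf)$ to a weighted $L^2$ quantity by an elementary convexity estimate, and then to extract the two contributions in the right-hand side of \eqref{estim.D} by an orthogonality argument carried out in the velocity variables.

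First I would use the elementary bound $(a-b)\log(a/b)\ge (a-b)^2/\max(a,b)$, valid for all $a,b>0$ (treating the two cases $a\gtrless b$ separately, each reducing to the classical inequality $\log s\ge 1-1/s$, resp. $\log s\le s-1$). Applying it pointwise with $a=f_1f_2'$ and $b=\chi_1\chi_2'$, and invoking the maximum-principle bounds \eqref{estimLinf_f1}--\eqref{estimLinf_f2}, which give $f_1f_2'\le(\rho_M/\rho_m)\chi_1\chi_2'$ and hence $\max(f_1f_2',\chi_1\chi_2')\le(\rho_M/\rho_m)\chi_1\chi_2'$ since $\rho_M/\rho_m\ge 1$, the nonnegative integrand of $\mathcal{D}$ is bounded below so that
\begin{equation*}
  \mathcal{D}(\Fbf)\ \ge\ \frac{\rho_m}{\rho_M}\int_{\T\times\R^2}\frac{(f_1f_2'-\chi_1\chi_2')^2}{\chi_1\chi_2'}\,\dd v\,\dd v'\,\dd x .
\end{equation*}

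Next, at fixed $x$ I would decompose orthogonally in the Hilbert space $L^2(\R^2,\chi_1\chi_2'\,\dd v\,\dd v')$. Since $\int_\R\chi_k=1$, the average of $f_1f_2'/(\chi_1\chi_2')$ against $\chi_1\chi_2'\,\dd v\,\dd v'$ equals $\rho_1\rho_2$, so Pythagoras gives
\begin{equation*}
  \int_{\R^2}\frac{(f_1f_2'-\chi_1\chi_2')^2}{\chi_1\chi_2'}\,\dd v\,\dd v'
  = (1-\rho_1\rho_2)^2 + \int_{\R^2}\Bigl(\frac{f_1f_2'}{\chi_1\chi_2'}-\rho_1\rho_2\Bigr)^{\!2}\chi_1\chi_2'\,\dd v\,\dd v'.
\end{equation*}
Writing $f_k=\rho_k\chi_k+(f_k-\rho_k\chi_k)$ and expanding the last integrand, every mixed term vanishes because $\int_\R(f_k-\rho_k\chi_k)\,\dd v=0$, leaving $\rho_1^2\!\int_\R\frac{(f_2-\rho_2\chi_2)^2}{\chi_2}\,\dd v + \rho_2^2\!\int_\R\frac{(f_1-\rho_1\chi_1)^2}{\chi_1}\,\dd v$ plus the nonnegative product of those two velocity integrals. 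Integrating the $L^\infty$ bounds over $v$ yields $\rho_1\ge\rho_m$ and $\rho_2\ge\rho_M^{-1}$, so this is bounded below by $\min(\rho_m,\rho_M^{-1})^2\bigl(\int_\R\frac{(f_1-\rho_1\chi_1)^2}{\chi_1}\,\dd v+\int_\R\frac{(f_2-\rho_2\chi_2)^2}{\chi_2}\,\dd v\bigr)$.

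Finally, since $(I-\Pi)\Ftbf=(f_1-\rho_1\chi_1,\,f_2-\rho_2\chi_2)^\top$ and $\rho_k^\infty(\rho_1^\infty+\rho_2^\infty)=(\rho_k^\infty)^2+\rho_1^\infty\rho_2^\infty\ge 1$, one has $\chi_k^{-1}\ge(\rho_1^\infty+\rho_2^\infty)^{-1}(\rho_k^\infty\chi_k)^{-1}$, so integrating the previous bound over $x\in\T$ turns the velocity integrals into $(\rho_1^\infty+\rho_2^\infty)^{-1}\|(I-\Pi)\Ftbf\|^2$, while $(1-\rho_1\rho_2)^2$ integrates to $\|1-\rho_1\rho_2\|_{L^2(\T)}^2$. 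Chaining the three displayed inequalities produces \eqref{estim.D} with exactly $C_3=\rho_m/\rho_M$ and $C_4=(\rho_m/\rho_M)(\rho_1^\infty+\rho_2^\infty)^{-1}\min(\rho_m,\rho_M^{-1})^2$. I expect the only genuinely delicate point to be the orthogonality step: one must pick the inner product so that the macroscopic mismatch $1-\rho_1\rho_2$ and the microscopic fluctuation $(I-\Pi)\Ftbf$ decouple, and verify that all cross terms in the expansion actually cancel; the remaining estimates are routine once the $L^\infty$ bounds are in hand.
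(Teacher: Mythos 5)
Your proposal is correct and reaches exactly the constants $C_3=\rho_m/\rho_M$ and $C_4=(\rho_m/\rho_M)(\rho_1^\infty+\rho_2^\infty)^{-1}\min(\rho_m,\rho_M^{-1})^2$, but the middle step is executed by a genuinely different route than the paper's. The first reduction is essentially identical: the paper uses a mean-value form of the logarithm together with $\widehat{f_1f_2'}\le(\rho_M/\rho_m)\chi_1\chi_2'$, while you use the equivalent elementary inequality $(a-b)\log(a/b)\ge(a-b)^2/\max(a,b)$; both yield $\mathcal{D}(\Fbf)\ge(\rho_m/\rho_M)\mathcal{E}(\Fbf)$. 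Where you diverge is in splitting $\mathcal{E}$: the paper symmetrizes by writing $(\rho_1^\infty+\rho_2^\infty)\mathcal{E}$ as a sum over $k\neq l$, applies the Cauchy--Schwarz inequality in $v'$ to collapse the doubled variable onto $(f_k\rho_l-\chi_k)^2$, and then checks that a single cross term vanishes; you instead perform an exact Pythagorean decomposition in $L^2(\R^2,\chi_1\chi_2'\,\dd v\,\dd v')$ with respect to the constants (whose projection of $f_1f_2'/(\chi_1\chi_2')$ is indeed $\rho_1\rho_2$), then fully expand in the fluctuations $g_k=f_k-\rho_k\chi_k$ and verify that all cross terms cancel since $\int_\R g_k\,\dd v=0$. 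Your version is arguably tighter at this stage --- the macroscopic term $(1-\rho_1\rho_2)^2$ comes out as an identity rather than through Cauchy--Schwarz, and you only discard the manifestly nonnegative quartic term $\bigl(\int_\R g_1^2/\chi_1\,\dd v\bigr)\bigl(\int_\R g_2^2/\chi_2\,\dd v\bigr)$ --- at the price of reinstating the equilibrium weights at the end via $\rho_k^\infty(\rho_1^\infty+\rho_2^\infty)\ge1$, which is correct since $\rho_1^\infty\rho_2^\infty=1$. Both arguments rely on the same $L^\infty$ bounds in the same places, and the decoupling of the macroscopic mismatch from the microscopic fluctuation that you flag as the delicate point is precisely the cross-term cancellation the paper also exploits, just packaged differently.
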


\begin{proof}
Using Taylor expansion, we have
\[\log\left(\frac{f_1f_2'}{\chi_1\chi_2'}\right)=\frac{1}{\widehat{f_1f_2'}}(f_1f_2'-\chi_1\chi_2'),\]
where $\widehat{f_1f_2'} \in \left(\min(f_1f_2', \chi_1\chi_2'),\max(f_1f_2', \chi_1\chi_2')\right)$. In particular $\widehat{f_1f_2'}\leq (\rho_M/\rho_m)\chi_1\chi_2'$ by assumptions \eqref{hyp.CI.1}--\eqref{hyp.CI.2}, and it yields
\begin{equation}\label{estim.Da}
    \mathcal{D}(\Fbf)\geq \frac{\rho_m}{\rho_M}\int_{\T\times\R^2}\frac{(f_1f_2'-\chi_1\chi_2')^2}{\chi_1\chi_2'}\dd v'\dd v\dd x\eqcolon\frac{\rho_m}{\rho_M}\mathcal{E}(\Fbf).
\end{equation}
Moreover, using that 
\[\rho_1^\infty+\rho_2^\infty=\frac{1}{\rho_1^\infty}+\frac{1}{\rho_2^\infty}\]
and $\int_\R \chi_k'\dd v'=1$, we remark that 
\begin{gather*}
    (\rho_1^\infty+\rho_2^\infty)\mathcal{E}(\Fbf)=\\
    \int_{\T\times\R}\frac{1}{\rho_1^\infty}\int_\R\frac{(f_1f_2'-\chi_1\chi_2')^2}{\chi_1\chi_2'}\dd v'\dd v\dd x+ \int_{\T\times\R}\frac{1}{\rho_2^\infty}\int_\R\frac{(f_1'f_2-\chi_1'\chi_2)^2}{\chi_1'\chi_2}\dd v'\dd v\dd x\\
    = \sum_{\substack{k,l=1,2\\ k\neq l}}\int_{\T\times\R}\frac{1}{\rho_k^\infty\chi_k}\left[\left(\int_\R\frac{f_kf_l'-\chi_k\chi_l')^2}{\chi_l'}\dd v'\right)^{\frac{1}{2}}\left(\int_\R \chi_l'\dd v'\right)^{\frac{1}{2}}\right]^2.
\end{gather*}
Then, thanks to the Cauchy-Schwarz inequality, we get
\begin{gather*}
(\rho_1^\infty+\rho_2^\infty)\mathcal{E}(\Fbf) \geq \sum_{\substack{k,l=1,2\\ k\neq l}}\int_{\T\times\R}\frac{1}{\rho_k^\infty\chi_k}\left[\int_\R(f_kf_l'-\chi_k\chi_l')\dd v'\right]^2\dd v\dd x\\
\geq \sum_{\substack{k,l=1,2\\ k\neq l}}\int_{\T\times\R}\frac{1}{\rho_k^\infty\chi_k}(f_k\rho_l-\chi_k)^2\dd v\dd x\\
\geq \sum_{\substack{k,l=1,2\\ k\neq l}}\int_{\T\times\R}\frac{1}{\rho_k^\infty\chi_k}\left[\rho_l(f_k-\rho_k\chi_k)-\chi_k(1-\rho_k\rho_l)\right]^2\dd v\dd x.
\end{gather*}
Noticing that the cross term in the expansion of the square satisfies
\[ \int_{\T\times\R}\frac{1}{\rho_k^\infty\chi_k}\rho_l(f_k-\rho_k\chi_k)\chi_k(1-\rho_k\rho_l)\dd v\dd x =0,\]
we obtain
\begin{gather*}
(\rho_1^\infty+\rho_2^\infty)\mathcal{E}(\Fbf) \geq  \sum_{\substack{k,l=1,2\\ k\neq l}}\int_{\T\times\R}\frac{1}{\rho_k^\infty\chi_k}\left(\rho_l^2(f_k-\rho_k\chi_k)^2+\chi_k^2(1-\rho_k\rho_l)^2\right)\dd v\dd x\\
\geq  \sum_{\substack{k,l=1,2\\ k\neq l}}\int_{\T\times\R}\frac{1}{f_k^\infty}\rho_l^2(f_k-\rho_k\chi_k)^2\dd v\dd x+\left(\frac{1}{\rho_1^\infty}+\frac{1}{\rho_2^\infty}\right)\int_\T (1-\rho_1\rho_2)^2\dd x \\
\geq \min(\rho_m,\rho_M^{-1})^2 \sum_{\substack{k,l=1,2\\ k\neq l}}\int_{\T\times\R}\frac{1}{f_k^\infty}(f_k-\rho_k\chi_k)^2\dd v\dd x+(\rho_1^\infty+\rho_2^\infty)\|1-\rho_1\rho_2\|_{L^2(\T)}^2,
\end{gather*}
where we have used $\rho_1^2\geq \rho_m^2$ and $\rho_2^2\geq \rho_M^{-2}$ in the last inequality. Finally, by definition of $I-\Pi$ and since $\Fbf^\infty$ belongs to the nullspace of $I-\Pi$, we get
\begin{align*}
(\rho_1^\infty+\rho_2^\infty)\mathcal{E}(\Fbf) &\geq \min(\rho_m,\rho_M^{-1})^2\|(I-\Pi)F\|^2+(\rho_1^\infty+\rho_2^\infty)\|1-\rho_1\rho_2\|_{L^2(\T)}^2\\
&\geq \min(\rho_m,\rho_M^{-1})^2\|(I-\Pi)\Ft\|^2+(\rho_1^\infty+\rho_2^\infty)\|1-\rho_1\rho_2\|_{L^2(\T)}^2,
\end{align*}
which concludes the proof by combining this estimate with \eqref{estim.Da}.
\end{proof}

\subsection{Nonlinear hypocoercivity}
With the microscopic and macroscopic estimates established in the previous subsections, we are now in a position to state the main hypocoercivity result given in \cite{FavrePirnerSchmeiser2023}. This theorem quantifies the exponential convergence of the solution toward equilibrium, combining the velocity dissipation and the macroscopic coercivity into a unified nonlinear estimate.
\begin{Theorem}\label{thm.hypoco}
    Under assumptions \eqref{hyp_chi}, there exist two constants $C\geq 1$ and $\kappa>0$ such that for all initial data $\Fbf^{in}=(f_1^{in},f_2^{in})^\top \in \mathcal{X}$ fulfilling \eqref{hyp.CI.1}--\eqref{hyp.CI.2}, the solution $\Fbf=(f_1,f_2)^\top$ of \eqref{eq_1_nonlin}--\eqref{eq_2_nonlin} satisfies
    \begin{equation}\label{estim.hypoco}
        \|\Fbf(t)-\Fbf^\infty\|\leq C\|\Fbf^{in}-\Fbf^\infty\|e^{-\kappa t}\quad \forall t\geq 0.
    \end{equation}
\end{Theorem}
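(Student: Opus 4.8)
The plan is to follow the Dolbeault--Mouhot--Schmeiser strategy, but with the relative Boltzmann entropy $H(\Fbf)$ in place of a weighted $L^2$-norm, exactly as in \cite{FavrePirnerSchmeiser2023}. The starting point is Lemma~\ref{lem_microcoercivity}, which gives $\frac{\dd}{\dd t}H(\Fbf)=-\mathcal{D}(\Fbf)\leq 0$, and Lemma~\ref{lem_estimD}, which bounds $\mathcal{D}(\Fbf)$ below by $C_3\|1-\rho_1\rho_2\|_{L^2(\T)}^2+C_4\|(I-\Pi)\Ftbf\|^2$. Pure entropy dissipation only controls the microscopic part $(I-\Pi)\Ftbf$ and the nonlinear moment quantity $1-\rho_1\rho_2$; it does not see the macroscopic fluctuation $\Pi\Ftbf=\rhotbf$ directly. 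To capture the latter, I would introduce a modified entropy functional of the form
\[
\mathcal{L}(\Fbf)\coloneq H(\Fbf)+\varepsilon\, A(\Fbf),
\]
where $\varepsilon>0$ is a small parameter to be fixed and $A(\Fbf)$ is an auxiliary functional built from the moments, typically a bilinear expression coupling $\Jtbf$ with $\dx\rhotbf$ (a discrete analogue of $\lla u \dx \rhot, \dots\rra$ type terms), designed so that $-\frac{\dd}{\dd t}A(\Fbf)$ produces a positive multiple of $\|\rhotbf\|_x^2$ (equivalently $\|\Pi\Ftbf\|^2$ by \eqref{estim.rhoFt}) up to terms controlled by $\|(I-\Pi)\Ftbf\|$, $\|\Jtbf\|_x$, $\|\Stbf\|_x$ and the nonlinear reaction terms.

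The key computational step is to differentiate $A(\Fbf)$ along the flow using the moment equations \eqref{eq.rhoFt}--\eqref{eq.Jt2}. The transport structure $\dt\Jt_k+D_k\dx\rhot_k=\dots$ is what makes the cross term $\dx\rhot_k$ appear with a definite sign after integration by parts in $x$; this is the usual hypocoercivity mechanism transferring dissipation from velocity to space. The nonlinear contributions on the right-hand sides—$(1-\rho_1\rho_2)$ in \eqref{eq.rhoFt} and $-\rho_\ell J_k$ in \eqref{eq.Jt1}--\eqref{eq.Jt2}—must be absorbed: here the $L^\infty$ bounds \eqref{estimLinf_f1}--\eqref{estimLinf_f2} are essential, since they bound $\rho_k$ between $\rho_m$ (resp. $\rho_M^{-1}$) and $\rho_M$ (resp. $\rho_m^{-1}$), so that $|\rho_\ell J_k|\lesssim \|\Jtbf\|_x$, and the $\|1-\rho_1\rho_2\|_{L^2(\T)}^2$ term is precisely what Lemma~\ref{lem_estimD} already controls. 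Using the moment estimates of Lemma~\ref{lem_mom_estim} to bound $\|\Jtbf\|_x$ and $\|\Stbf\|_x$ by $\|(I-\Pi)\Ftbf\|$, and Young's inequality with small weights, one arrives at
\[
\frac{\dd}{\dd t}\mathcal{L}(\Fbf)\leq -c_1\|\Pi\Ftbf\|^2-c_2\|(I-\Pi)\Ftbf\|^2-c_3\|1-\rho_1\rho_2\|_{L^2(\T)}^2
\]
for suitable positive constants once $\varepsilon$ is chosen small enough. Dropping the last (nonnegative) term and using $\|\Ftbf\|^2=\|\Pi\Ftbf\|^2+\|(I-\Pi)\Ftbf\|^2$ gives $\frac{\dd}{\dd t}\mathcal{L}(\Fbf)\leq -c\,\|\Ftbf\|^2$.

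The final step is the comparison between $\mathcal{L}(\Fbf)$, $H(\Fbf)$, and $\|\Ftbf\|^2$. Because $A(\Fbf)$ is linear-times-linear in the moments, a Cauchy--Schwarz/Young argument together with Lemma~\ref{lem_mom_estim} shows $|A(\Fbf)|\lesssim\|\Ftbf\|^2$, so for $\varepsilon$ small $\mathcal{L}(\Fbf)$ is comparable to $H(\Fbf)$. Then the key nonlinear sandwiching—this is where the relative Boltzmann entropy is genuinely used—is the Csisz\'ar--Kullback--Pinsker-type two-sided bound
\[
c_-\,\|\Ftbf\|^2\leq H(\Fbf)\leq c_+\,\|\Ftbf\|^2,
\]
valid precisely because the $L^\infty$ bounds \eqref{estimLinf_f1}--\eqref{estimLinf_f2} confine $f_k/f_k^\infty$ to a compact subinterval of $(0,\infty)$ on which $x\mapsto x\log x - x + 1$ is comparable to $(x-1)^2$; this allows the logarithmic terms in $H$ to be linearized. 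Combining, $\frac{\dd}{\dd t}\mathcal{L}\leq -c\|\Ftbf\|^2\leq -\frac{c}{c_+}H\leq -2\kappa\,\mathcal{L}$ for some $\kappa>0$, and Grönwall gives $\mathcal{L}(\Fbf(t))\leq\mathcal{L}(\Fbf^{in})e^{-2\kappa t}$; translating back through the two-sided bounds yields \eqref{estim.hypoco} with an explicit $C\geq 1$ depending on $\rho_m,\rho_M$ and the moments of $\chi_k$. I expect the main obstacle to be the bookkeeping in the differentiation of $A(\Fbf)$: ensuring that every nonlinear remainder is either of the form already dominated by Lemma~\ref{lem_estimD} or absorbable by Young's inequality without spoiling positivity of the three constants $c_1,c_2,c_3$, which forces a careful ordering of the smallness choices (first $\varepsilon$, then the Young weights).
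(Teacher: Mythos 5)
Your overall strategy is exactly the paper's: the proof of Theorem~\ref{thm.hypoco} builds the modified functional $\Gamma=H(\Fbf)+\delta\lla\Jtbf,\dx\Phibf\rra_x$, uses Lemma~\ref{lem_microcoercivity} for the entropy dissipation, Lemma~\ref{lem_estimD} to convert $\mathcal{D}(\Fbf)$ into control of $\|(I-\Pi)\Ftbf\|^2$ and $\|1-\rho_1\rho_2\|^2_{L^2(\T)}$, the $L^\infty$ bounds to absorb the nonlinear reaction terms, and the two-sided linearization of $H$ (your Csisz\'ar--Kullback step is precisely Lemma~\ref{lem.normeq}) to close the Gr\"onwall argument.

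The one place where your sketch, taken literally, would not go through is the choice of the auxiliary functional $A$. You propose coupling $\Jtbf$ with $\dx\rhotbf$. Differentiating such a term and using $\dt\Jt_k+D_k\dx\rhot_k+\dx\St_k=-\rho_\ell J_k$ produces $-D_k\|\dx\rhot_k\|^2_{L^2}$, not the needed $-D_k\|\rhot_k\|^2_{L^2}=-D_k\|\Pi\Ftbf\|^2$ (up to weights). Passing from the former to the latter would require a Poincar\'e inequality on $\rhot_k$, which fails here because the individual masses $\int_\T\rhot_k\,\dd x$ are \emph{not} conserved for this reaction system (only $\int(\rho_1-\rho_2)$ is), so $\rhot_k$ need not have zero mean. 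The paper avoids this by taking the DMS-type choice $A=\lla\Jtbf,\dx\Phibf\rra_x$ with $-\partial_{xx}\phi_k=\rhot_k$: then the term $T_2=-\lla (D_k\dx\rhot_k)_k,\dx\Phibf\rra_x$ integrates by parts directly into $-\min(D_1,D_2)\|\Pi\Ftbf\|^2$, at the price of an extra term $T_4=\lla\Jtbf,\dt\dx\Phibf\rra_x$ where the time derivative falls on $\Phibf$; that term is then controlled through the continuity equation \eqref{eq.rhoFt} and estimate \eqref{estim.dtdxphi}, which is where the $\|1-\rho_1\rho_2\|_{L^2(\T)}$ contribution you anticipated actually enters. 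With $A$ so chosen, the rest of your bookkeeping (moment estimates of Lemma~\ref{lem_mom_estim}, Young's inequality, smallness of $\delta$) is exactly what the paper carries out.
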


In order to prove Theorem~\ref{thm.hypoco}, we introduce a modified entropy functional, following the approach of \cite{FavrePirnerSchmeiser2023,DolbeaultMouhotSchmeiser2015}:
\begin{equation}\label{def.Gamma}
    \Gamma(t)\coloneq H(\Fbf)+\delta\lla \Jtbf,\dx\Phibf\rra_x,
\end{equation}
where $\Phibf=\left(\phi_1, \phi_2\right)^\top$, with each $\phi_k$ solving the Poisson equation
\begin{equation}\label{eq.Phi}
    -\partial_{xx}\phi_k=\rhot_k,\quad \int_\T \phi_k\dd x=0,\quad k=1,2,
\end{equation}
and $\delta>0$ is a small parameter to be fixed later. The additional term in \eqref{def.Gamma} is designed to explicitly recover dissipation in the spatial variable, reflecting the mixing effect induced by the transport operator.

We begin with key estimates on $\Phibf$ that will be used to control the additional term in the modified entropy $\Gamma(t)$.
\begin{Lemma}
    The function $\Phibf$ satisfies for all $t\geq 0$
    \begin{align}
        &\|\dx\Phibf\|_x\leq C_5\|\Pi \Ftbf\|, \label{estim.dxphi}\\
        &\|\dt\dx\Phibf\|_x\leq C_1\|(I-\Pi)\Ftbf\|+C_6\|1-\rho_1\rho_2\|_{L^2(\T)}, \label{estim.dtdxphi}
    \end{align}
    where $C_5$ is the Poincaré constant of $\T$ and $C_6=C_5\sqrt{\rho_1^\infty+\rho_2^\infty}$.
\end{Lemma}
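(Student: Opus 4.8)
The plan is to estimate the two terms separately, using the Poisson equation \eqref{eq.Phi} and the moment equations \eqref{eq.rhoFt}, \eqref{eq.Jt1}--\eqref{eq.Jt2}. For the first bound \eqref{estim.dxphi}, I would multiply the equation $-\partial_{xx}\phi_k=\rhot_k$ by $\phi_k$, integrate over $\T$, and integrate by parts to obtain $\|\dx\phi_k\|_{L^2(\T)}^2=\int_\T \rhot_k\,\phi_k\,\dd x$. Then, by Cauchy--Schwarz and the Poincaré inequality on $\T$ (using $\int_\T\phi_k\,\dd x=0$), this is bounded by $\|\rhot_k\|_{L^2(\T)}\,\|\phi_k\|_{L^2(\T)}\le C_5\|\rhot_k\|_{L^2(\T)}\,\|\dx\phi_k\|_{L^2(\T)}$, so $\|\dx\phi_k\|_{L^2(\T)}\le C_5\|\rhot_k\|_{L^2(\T)}$. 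Passing to the weighted norm $\|\cdot\|_x$ (summing the two components against the weights $1/\rho_k^\infty$) and using \eqref{estim.rhoFt}, which identifies $\|\rhotbf\|_x$ with $\|\Pi\Ftbf\|$, gives \eqref{estim.dxphi}.

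For the second bound \eqref{estim.dtdxphi}, I would differentiate \eqref{eq.Phi} in time: $-\partial_{xx}(\dt\phi_k)=\dt\rhot_k$. Using the continuity-type relation \eqref{eq.rhoFt}, namely $\dt\rhot_k=-\dx\Jt_k+(1-\rho_1\rho_2)$, this becomes $-\partial_{xx}(\dt\phi_k)=-\dx\Jt_k+(1-\rho_1\rho_2)$. Multiplying by $\dt\phi_k$ and integrating by parts as before yields $\|\dx\dt\phi_k\|_{L^2(\T)}^2=\int_\T \Jt_k\,\dx\dt\phi_k\,\dd x+\int_\T(1-\rho_1\rho_2)\,\dt\phi_k\,\dd x$. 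The first term is controlled by $\|\Jt_k\|_{L^2(\T)}\,\|\dx\dt\phi_k\|_{L^2(\T)}$ via Cauchy--Schwarz; the second, using Poincaré (note $\int_\T\dt\phi_k\,\dd x=0$ from differentiating the normalization), by $C_5\|1-\rho_1\rho_2\|_{L^2(\T)}\,\|\dx\dt\phi_k\|_{L^2(\T)}$. Dividing through by $\|\dx\dt\phi_k\|_{L^2(\T)}$ gives $\|\dx\dt\phi_k\|_{L^2(\T)}\le\|\Jt_k\|_{L^2(\T)}+C_5\|1-\rho_1\rho_2\|_{L^2(\T)}$. Finally, summing over $k$ in the weighted norm and invoking the current estimate \eqref{estim.JFt} (which bounds $\|\Jtbf\|_x\le C_1\|(I-\Pi)\Ftbf\|$), together with $C_6=C_5\sqrt{\rho_1^\infty+\rho_2^\infty}$ absorbing the weight constants on the $\|1-\rho_1\rho_2\|_{L^2(\T)}$ term, yields \eqref{estim.dtdxphi}.

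I do not anticipate a serious obstacle here: both estimates are standard elliptic regularity bounds for the periodic Laplacian combined with the already-established moment estimates of Lemma~\ref{lem_mom_estim}. The only point requiring mild care is bookkeeping of the equilibrium weights $1/\rho_k^\infty$ when passing from componentwise $L^2(\T)$ bounds to the weighted norm $\|\cdot\|_x$ — in particular, checking that the constant on the right-hand side of \eqref{estim.dtdxphi} coming from the source term $(1-\rho_1\rho_2)(1,1)^\top$ is exactly $C_6=C_5\sqrt{\rho_1^\infty+\rho_2^\infty}$, since that source contributes identically to both components. One should also confirm that $\dt\phi_k$ inherits the zero-average normalization, which is immediate from differentiating $\int_\T\phi_k\,\dd x=0$ in $t$.
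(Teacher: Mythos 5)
Your proposal is correct and follows essentially the same route as the paper: test the Poisson equation (and its time derivative) against $\phi_k$ (resp.\ $\dt\phi_k$), integrate by parts, use the continuity equation \eqref{eq.rhoFt}, Cauchy--Schwarz, the Poincaré inequality, and the moment estimates \eqref{estim.rhoFt}, \eqref{estim.JFt}. The only cosmetic difference is that you work componentwise and reassemble the weighted norm at the end (where the identity $1/\rho_1^\infty+1/\rho_2^\infty=\rho_1^\infty+\rho_2^\infty$ produces exactly $C_6$), whereas the paper carries out the computation directly in the vector-valued weighted inner product $\lla\cdot,\cdot\rra_x$.
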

\begin{proof}
    The first estimate is obtained by using the Poisson equation \eqref{eq.Phi}, applying the Poincaré inequality and equality \eqref{estim.rhoFt}:
    \[ \|\dx\Phibf\|_x^2=\lla\Phibf,-\partial_{xx}\Phibf\rra_x=\lla\Phibf,\rhotbf\rra_x\leq \|\Phibf\|_x\|\rhotbf\|_x\leq C_5\|\dx\Phibf\|_x\|\Pi\Ftbf\|.\]
    For the second estimate, we differentiate the Poisson equation with respect to time and use the continuity equation \eqref{eq.rhoFt} to get
    \begin{align*}
        \|\dt\dx\Phibf\|_x^2&=\lla \dt\Phibf,-\dt\partial_{xx}\Phibf\rra_x=\lla\dt\Phibf,\dt\rhotbf\rra_x\\
        &=\lla\dt\Phibf,-\dx \Jtbf+(1-\rho_1\rho_2)\left(\begin{matrix}1\\1\end{matrix}\right)\rra_x.
    \end{align*}
    An integration by parts on the first term and the Cauchy-Schwarz inequality give
    \begin{align}
        \|\dt\dx\Phibf\|_x^2&=\lla\dt\dx\Phibf,\Jtbf\rra_x+\lla\dt\Phibf,(1-\rho_1\rho_2)\left(\begin{matrix}1\\ 1\end{matrix}\right)\rra_x \nonumber\\
        &\leq \|\dt\dx\Phibf\|_x\|\Jtbf\|_x+\|\dt\Phibf\|_x\left\|\left(\begin{matrix}1-\rho_1\rho_2\\1-\rho_1\rho_2\end{matrix}\right)\right\|_x. \label{estim.dtdxPhiint}
    \end{align}
    Using that $\rho_1^\infty\rho_2^\infty=1$, we have
    \begin{equation*}
        \left\|\left(\begin{matrix}1-\rho_1\rho_2\\1-\rho_1\rho_2\end{matrix}\right)\right\|_x^2=\left(\frac{1}{\rho_1^\infty}+\frac{1}{\rho_2^\infty}\right)\int_\T(1-\rho_1\rho_2)^2\dd x=(\rho_1^\infty+\rho_2^\infty)\|1-\rho_1\rho_2\|_{L^2(\T)}^2.
    \end{equation*}
    Using this together with the Poincaré inequality on the second term of \eqref{estim.dtdxPhiint}, and the estimate \eqref{estim.JFt} on the first term of \eqref{estim.dtdxPhiint} finally gives the result.
\end{proof}

Using the $L^\infty$ bounds \eqref{estimLinf_f1}--\eqref{estimLinf_f2} on $\Fbf$ together with the moment estimates, we show that, for $\delta>0$ sufficiently small, the modified entropy $\Gamma(t)$ defines a norm equivalent to $\|\cdot\|$.

\begin{Lemma}[Equivalent norm]\label{lem.normeq}
    There is $\delta_1>0$ such that for all $\delta\in (0,\delta_1)$, there are positive constants $0<c_\delta<C_\delta$ such that if $\Fbf\in\mathcal{X}$ satisfies \eqref{estimLinf_f1}--\eqref{estimLinf_f2}, one has
    \[c_\delta\|\Ftbf(t)\|^2\leq \Gamma(t)\leq C_\delta\|\Ftbf(t)\|^2.\] 
\end{Lemma}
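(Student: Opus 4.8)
## Proof proposal

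The plan is to prove the two-sided bound by treating the entropy $H(\Fbf)$ and the correction term $\delta\lla\Jtbf,\dx\Phibf\rra_x$ separately, and then combining them. For the entropy part, the first step is to show that $H(\Fbf)$ is itself equivalent to $\|\Ftbf\|^2$ under the $L^\infty$ bounds \eqref{estimLinf_f1}--\eqref{estimLinf_f2}. Writing $H(\Fbf) = \sum_k \int_{\T\times\R} f_k^\infty\,\psi(f_k/f_k^\infty)\,\dd v\dd x$ with $\psi(s) = s\log s - s + 1 \geq 0$, a Taylor expansion of $\psi$ around $s=1$ gives $\psi(s) = \tfrac12 \psi''(\xi)(s-1)^2 = \tfrac{1}{2\xi}(s-1)^2$ for some $\xi$ between $1$ and $s$. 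The $L^\infty$ bounds force the ratio $f_k/f_k^\infty$ to lie in a fixed compact interval $[\rho_m/\rho_1^\infty, \rho_M/\rho_1^\infty]$ (resp.\ for $k=2$), hence $\xi$ stays in that interval, and $\tfrac{1}{2\xi}$ is bounded above and below by positive constants depending only on $\rho_m,\rho_M$. Therefore
\begin{equation*}
    a\,\|\Ftbf(t)\|^2 \leq H(\Fbf) \leq b\,\|\Ftbf(t)\|^2
\end{equation*}
for explicit $0<a\leq b$, using that $(f_k/f_k^\infty - 1)^2 f_k^\infty = \ft_k^2/f_k^\infty$.

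The second step is to bound the correction term. By Cauchy--Schwarz in $\|\cdot\|_x$, $|\lla\Jtbf,\dx\Phibf\rra_x| \leq \|\Jtbf\|_x\,\|\dx\Phibf\|_x$, and then \eqref{estim.JFt} gives $\|\Jtbf\|_x \leq C_1\|(I-\Pi)\Ftbf\|$ while \eqref{estim.dxphi} gives $\|\dx\Phibf\|_x \leq C_5\|\Pi\Ftbf\|$. Using Young's inequality and the Pythagorean identity $\|(I-\Pi)\Ftbf\|^2 + \|\Pi\Ftbf\|^2 = \|\Ftbf\|^2$, we obtain $|\lla\Jtbf,\dx\Phibf\rra_x| \leq \tfrac12 C_1 C_5\|\Ftbf\|^2$. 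Hence $|\delta\lla\Jtbf,\dx\Phibf\rra_x| \leq \tfrac{\delta}{2}C_1C_5\|\Ftbf\|^2$, so
\begin{equation*}
    \left(a - \tfrac{\delta}{2}C_1C_5\right)\|\Ftbf(t)\|^2 \leq \Gamma(t) \leq \left(b + \tfrac{\delta}{2}C_1C_5\right)\|\Ftbf(t)\|^2.
\end{equation*}
Choosing $\delta_1 \coloneq a/(C_1 C_5)$ (or any smaller positive number), for every $\delta\in(0,\delta_1)$ the lower constant $c_\delta \coloneq a - \tfrac{\delta}{2}C_1C_5$ is strictly positive and $C_\delta \coloneq b + \tfrac{\delta}{2}C_1C_5$ is finite, which is exactly the claim.

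The only genuinely delicate point is the lower bound $a\|\Ftbf\|^2 \leq H(\Fbf)$: one must be careful that the Taylor remainder estimate is uniform, which is precisely where the $L^\infty$ bounds \eqref{estimLinf_f1}--\eqml{estimLinf_f2} are essential — without them the ratio $f_k/f_k^\infty$ could approach $0$ or $\infty$, making $\psi''$ degenerate or blow up, and the equivalence would fail. The rest is routine: the upper bound on $H$ is the easy direction (it uses $\xi$ bounded below away from $0$, again from the $L^\infty$ bounds), and the correction term is handled by standard Cauchy--Schwarz/Young manipulations. I would also note that the integrals are finite because $\Ftbf\in\mathcal{X}$ together with the pointwise bounds ensures the relevant quantities are integrable against $1/f_k^\infty$.
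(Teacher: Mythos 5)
Your proposal is correct and follows essentially the same route as the paper: a second-order Taylor expansion of the entropy integrand combined with the $L^\infty$ bounds to sandwich $H(\Fbf)$ between multiples of $\|\Ftbf\|^2$, then Cauchy--Schwarz with the moment estimate \eqref{estim.JFt} and the Poisson estimate \eqref{estim.dxphi} to absorb the correction term for $\delta$ small. The only (harmless) difference is your extra Young's inequality step, which gives a factor $1/2$ and hence a marginally larger admissible $\delta_1$.
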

\begin{proof}
    We first treat the Boltzmann part of $\Gamma$. Applying a Taylor expansion to the function 
    \[ \xi\mapsto \xi\left(\log\left(\xi/f_k^\infty\right)-1\right)+f_k^\infty, \]
    there exists $\xi_k \in \left ( \min(f_k,f_k^\infty),\max(f_k,f_k^\infty)\right)$ such that
    \[f_k\left[\log\left(f_k/f_k^\infty\right)-1\right]+f_k^\infty=\frac{1}{2\xi_k}(f_k-f_k^\infty)^2.\]
    Since $f_k^\infty=\chi_k\rho_k^\infty$ and given the bounds \eqref{estimLinf_f1}--\eqref{estimLinf_f2} on $f_k$, it holds %at $\xi_k\in \left(f_k,f_k^\infty\right)$, it holds
    \begin{gather*}
        \frac{\rho_m}{\rho_1^\infty}f_1^\infty\leq \xi_1\leq\frac{\rho_M}{\rho_1^\infty}f_1^\infty,\\
        \frac{1}{\rho_M\rho_2^\infty}f_2^\infty\leq \xi_2\leq \frac{1}{\rho_m\rho_2^\infty}f_2^\infty.
    \end{gather*}
    We then have
    \begin{align*}
        H(\Fbf)&=\int_{\T\times\R}\left[\frac{(f_1-f_1^\infty)^2}{2\xi_1}+\frac{(f_2-f_2^\infty)^2}{2\xi_2}\right]\dd v \dd x\\
        &\leq \int_{\T\times\R}\left[\frac{1}{2}(f_1-f_1^\infty)^2\frac{\rho_1^\infty}{\rho_m f_1^\infty}+\frac{1}{2}(f_2-f_2^\infty)^2\frac{\rho_M\rho_2^\infty}{f_2^\infty}\right] \dd v \dd x\\
        &\leq \frac{1}{2}\max \left(\frac{\rho_1^\infty}{\rho_m},\rho_M\rho_2^\infty\right)\|\Ftbf\|^2\eqcolon C_H\|\Ftbf\|^2.
    \end{align*}
    Proceeding in the same way for the lower bound, we obtain
    \begin{equation*}
        H(\Fbf)\geq \frac{1}{2}\min\left(\frac{\rho_1^\infty}{\rho_M},\rho_m\rho_2^\infty\right)\|\Ftbf\|^2\eqcolon c_H\|\Ftbf\|^2.
    \end{equation*}
    We now treat the second term of $\Gamma$:
    \[
    \left|\lla \Jtbf,\dx\Phibf\rra_x\right|\leq \|\Jtbf\|_x\|\dx\Phibf\|_x\leq C_1\|(I-\Pi)\Ftbf\|C_5\|\Pi\Ftbf\|\leq C_1\,C_5\|\Ftbf\|^2.
    \]
    This finally gives the expected result with $c_\delta=c_H-\delta C_1C_5$ and $C_\delta=C_H+\delta C_1C_5$. One concludes the proof by choosing $\delta>0$ such that $c_\delta>0$, that is $\delta <\delta_1\coloneq  C_H/(C_1C_5)$.
\end{proof}

Combining lemmas~\ref{lem_estimD} and \ref{lem.normeq}, one can then establish the proof of Theorem~\ref{thm.hypoco}.

\begin{proof}[Proof of Theorem \ref{thm.hypoco}]
    Using the microscopic coercivity \eqref{estim.H_D}, we have
    \[ \frac{\dd}{\dd t}\Gamma(t)=-\mathcal{D}(\Fbf)+\delta\frac{\dd}{\dd t}\lla \Jtbf,\dx\Phibf\rra_x. \]
    We start by studying the second term. Using the continuity equations \eqref{eq.Jt1}--\eqref{eq.Jt2}, one has
    \[\frac{\dd}{\dd t}\lla \Jtbf,\dx\Phibf\rra_x=\lla\dt \Jtbf,\dx\Phibf\rra_x+\lla \Jtbf,\dt\dx\Phibf\rra_x= T_1+T_2+T_3+T_4,\]
    where
    \begin{align*}
        T_1 &\coloneq  -\lla\dx \Stbf,\dx\Phibf\rra_x,\\
        T_2 &\coloneq  -\lla\left(\begin{matrix}D_1\dx\rhot_1 \\ D_2\dx\rhot_2\end{matrix}\right),\dx\Phibf\rra_x,\\
        T_3 &\coloneq  -\lla\left(\begin{matrix} \rho_2 J_1\\ \rho_1 J_2\end{matrix}\right),\dx\Phibf\rra_x,\\
        T_4 &\coloneq \lla \Jtbf,\dt\dx\Phibf\rra_x.
    \end{align*}
    The term $T_1$ can be easily controlled with moment estimates \eqref{estim.rhoFt} and \eqref{estim.SFt}:   
    \begin{equation*}
        |T_1|=\left|\lla \Stbf,\partial_{xx}\Phibf\rra_x\right|=\left|-\lla \Stbf,\rhotbf\rra_x\right|\leq \|\Stbf\|_x\|\rhotbf\|_x\leq C_2\|(I-\Pi)\Ftbf\|\|\Pi \Ftbf\|.
    \end{equation*}
    The term $T_2$ provides the macroscopic coercivity. Indeed, using an integration by parts and the Poisson equation \eqref{eq.Phi}, we get
    \begin{align*}
        T_2&=-\sum_{k=1,2}\frac{D_k}{\rho_k^\infty}\lla\dx\rhot_k,\dx\phi_k\rra_{L^2(\T)}=\sum_{k=1,2}\frac{D_k}{\rho_k^\infty}\lla \rhot_k,\dxx\phi_k\rra_{L^2(\T)}\\
        &\leq -\min(D_1,D_2)\|\rhotbf\|_x^2 = -\min(D_1,D_2)\|\Pi\Ftbf\|^2.
    \end{align*}
    In the terms $T_3$ and $T_4$, nonlinear contributions appear. These will be controlled on the one hand by the Boltzmann entropy dissipation (microscopic coercivity), and on the other hand by the $L^\infty$ bounds on $\Fbf$. Using the Cauchy-Schwarz inequality, we have
    \[|T_3|\leq \left\|\left(\begin{matrix}\rho_2 J_1\\ \rho_1 J_2\end{matrix}\right)\right\|_x\|\dx\Phibf\|_x.\]
    From the $L^\infty$ bounds on $\Fbf$, we deduce that
    \[ \rho_m\leq \rho_1\leq \rho_M,\quad \rho_M^{-1}\leq \rho_2\leq \rho_m^{-1},\]
    which yields
    \begin{align*}
        \left\|\left(\begin{matrix}\rho_2 J_1\\ \rho_1 J_2\end{matrix}\right)\right\|_x^2&=\frac{1}{\rho_1^\infty}\int_\T(\rho_2 J_1)^2\dd x+\frac{1}{\rho_2^\infty}\int_\T (\rho_1 J_2)^2\dd x\\
        &\leq \frac{\rho_m^{-2}}{\rho_1^\infty}\int_\T J_1^2\dd x+\frac{\rho_M^2}{\rho_2^\infty}\int_\T J_2^2\dd x\\
        &\leq \max(\rho_m^{-1},\rho_M)^2\|\Jbf_\Fbf\|^2_x\\
        & \leq \max(\rho_m^{-1},\rho_M)^2C_1^2\|(I-\Pi)\Ftbf\|^2,
    \end{align*}
    and finally, using also \eqref{estim.dxphi},
    \begin{equation*}
        |T_3|\leq \max(\rho_m^{-1},\rho_M)C_1C_5\|(I-\Pi)\Ftbf\|\|\Pi\Ftbf\|.
    \end{equation*}
    Let us now turn to the estimate of $T_4$. Thanks to the Cauchy-Schwarz inequality and using estimates \eqref{estim.JFt} and \eqref{estim.dtdxphi}, we write
    \begin{align*}
        |T_4|&\leq \|\Jtbf\|_x\|\dt\dx\Phibf\|_x\\
        &\leq C_1\|(I-\Pi)\Ftbf\|\left[C_1\|(I-\Pi)\Ftbf\|+C_6\|1-\rho_1\rho_2\|_{L^2(\T)}\right]\\
        &\leq C_1^2\|(I-\Pi)\Ftbf\|^2+C_1C_6\|(I-\Pi)\Ftbf\|\|1-\rho_1\rho_2\|_{L^2(\T)}.
    \end{align*}
    Using the Young inequality on the second term of the right hand side, we get
    \[|T_4|\leq C_1^2\left(1+\frac{C_6^2}{2}\right)\|(I-\Pi)\Ftbf\|^2+\frac{1}{2}\|1-\rho_1\rho_2\|_{L^2(\T)}^2.\]

    If we now recombine all the terms, and use estimate \eqref{estim.D} of the dissipation $\mathcal{D}(\Fbf)$, we obtain
    \begin{gather*}
        \frac{\dd}{\dd t}\Gamma(t)\leq -\left(C_3-\frac{\delta}{2}\right)\|1-\rho_1\rho_2\|_{L^2(\T)}^2-\left(C_4-\delta C_1^2\left(1+\frac{C_6^2}{2}\right)\right)\|(I-\Pi)\Ftbf\|^2\\
        -\delta\min(D_1,D_2)\|\Pi\Ftbf\|^2+\delta\left(C_2+\max(\rho_m^{-1},\rho_M)C_1C_5\right)\|(I-\Pi)\Ftbf\|\|\Pi\Ftbf\|.
    \end{gather*}
  If $\delta>0$ is chosen sufficiently small, we obtain two constants $K_\delta>0$ and $\overline{K}_\delta>0$ such that
    \[\frac{\dd}{\dd t}\Gamma(t)+K_\delta\|\Ftbf(t)\|^2+\overline{K}_\delta \|1-\rho_1\rho_2\|^2_{L^2(\T)}\leq 0,\]
    and we can conclude by using Lemma \ref{lem.normeq}.
\end{proof}

\section{The discrete setting}\label{sec:DiscreteSetting}
In this section, we recall the numerical scheme introduced in \cite{BCLR2025}. As discussed therein, the use of monotone numerical fluxes is crucial to ensure a discrete maximum principle. In \cite{BCLR2025}, Lax–Friedrichs fluxes were employed, as they are well suited for extending the hypocoercivity framework to the discrete setting. However, these fluxes do not appear to dissipate the Boltzmann entropy--at least no rigorous proof of such a property is available--making them unsuitable for the present analysis. We therefore consider upwind fluxes, which are both monotone and, as will be shown below, compatible with the Boltzmann entropy structure.

\subsection{Notations}
Before introducing the numerical scheme, we fix the notations and describe the underlying mesh.
\subsubsection{Mesh}
Since implementing a numerical scheme on an unbounded domain is not feasible in practice, we first restrict the velocity domain to a bounded and symmetric interval $[-v^*,v^*]$. We consider a discretization of $[-v^*,v^*]$ composed of $2L$ control volumes, arranged symmetrically around the origin. Let $v_\jph$ denote the $2L+1$ interface points, indexed by $j\in\J\coloneq\{-L,\cdots,L\}$, so that
\[ v_{-L+\frac{1}{2}}=-v^*,\quad v_{\frac{1}{2}}=0,\quad v_\jph=-v_\jmh \qquad \forall j=0,\cdots,L.\]
For simplicity, we assume that the velocity grid is uniform, namely that each cell $\mathcal{V}_j=(v_\jmh,v_\jph)$ has the same length $\Dv$. Denoting by $v_j$ the midpoint of the cell $\mathcal{V}_j$, we also have $v_j=-v_{-j+1}$ for all $j=1,\cdots,L$.

In the position variable, we consider a uniform discretization of the torus $\T$ into $N$ control volumes
\[\mathcal{X}_i\coloneq(x_\imh,x_\iph), \quad i\in\II\coloneq\mathbb{Z}/N\mathbb{Z},\]
of constant length $\Dx$. Once again, uniformity of the mesh is assumed for clarity, although the analysis extends to non-uniform meshes. As in \cite{BessemoulinHerdaRey2020}, we further impose that $N$ is odd, which in particular ensures the validity of a discrete Poincaré inequality on the torus for the centered discrete derivative operator.

Finally, the control volumes in phase space are defined by
\[ K_{ij}\coloneq\mathcal{X}_i\times\mathcal{V}_j,\qquad \forall (i,j)\in\II\times \J,\]
and the mesh size is given by $\Delta=(\Dx,\Dv)$.

In addition, we denote by $\Delta t>0$ the time step used for the discrete time integration.

\subsubsection{Discrete velocity profiles} 
We now define discrete versions of the velocity profiles $\chi_k$. Our goal is to ensure that these discrete functions satisfy discrete counterparts of assumptions~\eqref{hyp_chi}, which play a crucial role and are repeatedly used throughout the continuous analysis. 

For $k=1,\,2$, we consider discrete values $(\chi_{k,j})_{j\in\J}\in\mathbb{R}^\J$ that satisfy the following discrete counterparts of assumptions~\eqref{hyp_chi}
\begin{equation}\label{hyp_chi_dis}
    \begin{aligned}
        & \chi_{k,j}>0,\quad \chi_{k,j}=\chi_{k,-j+1} \qquad \forall j=1,\cdots, L,\\
        & \sum_{j\in\J}\Dv\,\chi_{k,j}=1,\\
        & 0 <\,\underline{D}_k\,\leq D_k^\Delta\leq\,\overline{D}_k\,<\infty,\qquad Q_k^\Delta\leq\,\overline{Q}_k\,<\infty,
    \end{aligned}
\end{equation}
where 
\[ D_k^\Delta\coloneq\sum_{j\in\J}\Dv\,|v_j|^2\,\chi_{k,j},\qquad Q_k^\Delta\coloneq\sum_{j\in\J}\Dv\,|v_j|^4\,\chi_{k,j}=\sum_{j\in\J}\Dv\,v_j^2|v_j|^2\,\chi_{k,j},\]
and $\underline{D}_k$, $\overline{D}_k$, $\overline{Q}_k$ are universal constants. Typically, we define $\chi_{k,j}=c_{\Dv}\chi_k(v_j)$ and compute $c_{\Dv}$ in such a way that the mass of $(\chi_{k,j})_j$ is 1. Note also that the symmetry properties imply 
\begin{equation}\label{sym_chi_dis}
    \sum_{j\in\J}\Dv\,v_j\,\chi_{k,j}=0.
\end{equation}

\subsubsection{Discrete functional setting}
To reproduce at the discrete level the functional framework used in the continuous analysis, we now introduce discrete counterparts of the weighted $L^2$ scalar products and of the associated norms for both microscopic and macroscopic quantities.

For discrete microscopic quantities $\Fbf=\left(f_{1,ij},f_{2,ij}\right)^\top_{ij}$ and $ \Gbf=\left( g_{1,ij}, g_{2,ij} \right)^\top_{ij}$, we define the discrete weighted $L^2$ scalar product by
\begin{equation}\label{def_psL2weighted_sdis}
    \langle \Fbf,\Gbf\rangle_\Delta\coloneq\sum_{(i,j)\in\II\times\J}\Dx\Dv\,\left(\frac{f_{1,ij}g_{1,ij}}{f_{1,j}^{\infty,*}} + \frac{f_{2,ij}g_{2,ij}}{f_{2,j}^{\infty,*}}\right),
\end{equation}
and denote by $\|\cdot\|_\Delta$ the associated norm. For macroscopic quantities $\mathbf{U}=\left(u_{1,i}, u_{2,i} \right)^\top_{i\in\II}$ and $\mathbf{W}=\left( w_{1,i}, w_{2,i} \right)^\top_{i\in\II}$, we also define a discrete weighted scalar product in position $\lla \cdot,\cdot\rra_{\Delta,x}$ as
\begin{equation}\label{def_psL2weightedposition_sdis}
    \lla \mathbf{U}, \mathbf{W}\rra_{\Delta,x} = \sum_{i\in\II}\Dx\left(\frac{u_{1,i}w_{1,i}}{\rho_1^{\infty,*}} + \frac{u_{2,i}w_{2,i}}{\rho_2^{\infty,*}}\right)
\end{equation}
with $\|\cdot\|_{\Delta,x}$ the associated norm. 

Due to our choices of discretization, we need to define several discrete gradients in position. Given a macroscopic quantity $u=(u_i)_{i\in\II}$, we define 
\begin{itemize}
\item the discrete centered gradient $D_x^c u\in\R^\II$ given by
\[(D_x^cu)_i=\frac{u_{i+1}-u_{i-1}}{2\Dx} \qquad \forall i\in\II,\]
\item the discrete downstream gradient $D_x^-u\in\R^\II$ given by
\[(D_x^-u)_i=\frac{u_{i}-u_{i-1}}{\Dx} \qquad \forall i\in\II,\]
\item the discrete upstream gradient $D_x^+u\in\R^\II$ given by
\[(D_x^+u)_i=\frac{u_{i+1}-u_{i}}{\Dx} \qquad \forall i\in\II.\]
\end{itemize}
It is straightforward to see that these discrete gradients satisfy the following properties:
\begin{equation}\label{prop_gradients_dis}
    \frac{1}{2}(D_x^-+D_x^+)=D_x^c, \qquad D_x^+D_x^-=D_x^-D_x^+.
\end{equation}

By convention, we assume that these discrete differentiation operators act componentwise on vector-valued quantities. In particular, for a discrete quantity $\mathbf{U}=(U_1,U_2)^\top$, we set
\[D_x^*\mathbf{U}=(D_x^*U_1,D_x^*U_2)^\top, \quad \text{for }*\in\{c,+,-\}.\]
Using the definition of the discrete gradients and the periodic boundary conditions, we immediately have the following algebraic properties.

\begin{Lemma}\label{ref_IPP_discret}
   For all discrete macroscopic quantities $\mathbf{U}$ and $\mathbf{W}$, it holds
    \begin{align}
       & \langle D_x^c \mathbf{U},\mathbf{W}\rangle_{\Delta,x}=-\langle \mathbf{U},D_x^c\mathbf{W}\rangle_{\Delta,x}, \label{IPP_centre}\\
       & \langle D_x^+ \mathbf{U},\mathbf{W}\rangle_{\Delta,x}=-\langle \mathbf{U},D_x^-\mathbf{W}\rangle_{\Delta,x} \label{IPP_decentre},\\
       & \langle (D_x^+D_x^- + D_x^-D_x^+) \mathbf{U},\mathbf{W}\rangle_{\Delta,x}=-4\langle D_x^c\mathbf{U},D_x^c\mathbf{W}\rangle_{\Delta,x} \label{IPP2_decentre},\\
       & \Dx\|D_x^c \mathbf{U}\|_{\Delta,x} \leq \|\mathbf{U}\|_{\Delta,x} \label{estim_dxu_disc}.
    \end{align}
\end{Lemma}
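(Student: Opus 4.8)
The plan is to prove each of the four identities by direct manipulation of the defining sums, using only the periodicity of the index set $\II=\mathbb{Z}/N\mathbb{Z}$ (which legitimizes shifts of the summation index) and the definitions of the discrete gradients. Since all operators act componentwise and both components of $\lla\cdot,\cdot\rra_{\Delta,x}$ have the same structure (differing only by the fixed positive weights $\rho_k^{\infty,*}$), it suffices to establish each identity for a single scalar sequence $u=(u_i)_{i\in\II}$ paired with $w=(w_i)_{i\in\II}$ against the unweighted bracket $\sum_{i\in\II}\Dx\,u_i w_i$; the weighted vector-valued versions follow by linearity.

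For \eqref{IPP_centre}, I would write $\sum_i \Dx\,\frac{u_{i+1}-u_{i-1}}{2\Dx}w_i=\frac12\sum_i(u_{i+1}-u_{i-1})w_i$ and shift indices ($i\mapsto i-1$ in the first piece, $i\mapsto i+1$ in the second) to get $\frac12\sum_i u_i(w_{i-1}-w_{i+1})=-\frac12\sum_i u_i(w_{i+1}-w_{i-1})=-\lla u,D_x^c w\rra$ after reinserting the weights; periodicity is what makes the index shift exact with no boundary terms. For \eqref{IPP_decentre}, the same one-step shift turns $\sum_i(u_{i+1}-u_i)w_i$ into $\sum_i u_i(w_{i-1}-w_i)=-\sum_i u_i(w_i-w_{i-1})$, i.e. $\langle D_x^+\mathbf U,\mathbf W\rangle_{\Delta,x}=-\langle\mathbf U,D_x^-\mathbf W\rangle_{\Delta,x}$. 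Identity \eqref{IPP2_decentre} then follows by composing \eqref{IPP_decentre} with itself: $\langle D_x^+D_x^-\mathbf U,\mathbf W\rangle_{\Delta,x}=-\langle D_x^-\mathbf U,D_x^-\mathbf W\rangle_{\Delta,x}$ and $\langle D_x^-D_x^+\mathbf U,\mathbf W\rangle_{\Delta,x}=-\langle D_x^+\mathbf U,D_x^+\mathbf W\rangle_{\Delta,x}$, and then I would use the elementary algebraic identity $(D_x^-u)_i(D_x^-w)_i+(D_x^+u)_i(D_x^+w)_i=2(D_x^cu)_i(D_x^cw)_i+\tfrac12(D_x^+u-D_x^-u)_i(D_x^+w-D_x^-w)_i$, or more directly expand $(D_x^-u)(D_x^-w)+(D_x^+u)(D_x^+w)$ at each node and recombine using $(a_{i}-a_{i-1})+(a_{i+1}-a_i)=a_{i+1}-a_{i-1}=2\Dx(D_x^cu)_i$ together with a cross-term telescoping after summation; the net result is $-4\langle D_x^c\mathbf U,D_x^c\mathbf W\rangle_{\Delta,x}$, consistent with the first relation in \eqref{prop_gradients_dis}. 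The cleanest route is probably to just verify directly that $\sum_i\big[(D_x^-u)_i(D_x^-w)_i+(D_x^+u)_i(D_x^+w)_i\big]=4\sum_i(D_x^cu)_i(D_x^cw)_i$ is \emph{false} in general and instead rely on the operator composition above plus $\tfrac12(D_x^++D_x^-)=D_x^c$, computing $\langle(D_x^+D_x^-+D_x^-D_x^+)\mathbf U,\mathbf W\rangle_{\Delta,x}=-\langle D_x^-\mathbf U,D_x^-\mathbf W\rangle-\langle D_x^+\mathbf U,D_x^+\mathbf W\rangle$ and expanding each $D_x^\pm$ as $D_x^c\pm\tfrac{\Dx}{?}\dots$—so the potential pitfall here is getting the constant and the second-difference remainder right, which is the one place I would be careful. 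Finally, \eqref{estim_dxu_disc} is a discrete Poincaré-type bound: using $N$ odd as stated, I would invoke (or recall from \cite{BessemoulinHerdaRey2020}) that the centered difference operator on an odd-sized periodic grid has a trivial kernel and a spectral gap, giving $\Dx^2\|D_x^c u\|^2\le \|u\|^2$ after noting that the eigenvalues of $\Dx D_x^c$ are $i\sin(2\pi k/N)$, which are bounded in modulus by $1$; with the constant weights this passes verbatim to $\|\cdot\|_{\Delta,x}$.

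I do not expect any genuine obstacle: everything reduces to summation-by-parts on a finite cyclic group, where boundary terms are absent by construction. The only step requiring attention is \eqref{IPP2_decentre}, where one must track the precise numerical constant $4$ and ensure the second-difference remainder terms cancel upon summation—this is where an off-by-a-factor error would most naturally creep in, so I would double-check it by testing against the Fourier modes $u_i=e^{2\pi\mathrm{i}ki/N}$, for which $(D_x^cu)_i=\tfrac{\mathrm{i}}{\Dx}\sin(2\pi k/N)u_i$ and $(D_x^\pm D_x^\mp u)_i=-\tfrac{2}{\Dx^2}(1-\cos(2\pi k/N))u_i$, whence $(D_x^+D_x^-+D_x^-D_x^+)u=-\tfrac{4}{\Dx^2}(1-\cos)u=-4(D_x^c)^\ast D_x^c u$ on each mode, confirming the identity and the constant.
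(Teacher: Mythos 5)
Your arguments for \eqref{IPP_centre}, \eqref{IPP_decentre} and \eqref{estim_dxu_disc} are correct and coincide with what the paper does (it gives no proof, declaring these immediate from periodicity and index shifts). One small point: \eqref{estim_dxu_disc} is a plain operator-norm bound, $\Dx\,|(D_x^c u)_i|=|u_{i+1}-u_{i-1}|/2\le(|u_{i+1}|+|u_{i-1}|)/2$, valid for every $N$; the odd-$N$/trivial-kernel discussion you invoke belongs to the Poincar\'e inequality of Lemma~\ref{lem_Poincare_dis}, not here.

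The genuine gap is \eqref{IPP2_decentre}, and your own intermediate computation already contains the correct answer. Summation by parts gives
\[
\langle (D_x^+D_x^-+D_x^-D_x^+)\mathbf U,\mathbf W\rangle_{\Delta,x}
=-\langle D_x^-\mathbf U,D_x^-\mathbf W\rangle_{\Delta,x}-\langle D_x^+\mathbf U,D_x^+\mathbf W\rangle_{\Delta,x},
\]
and since $D_x^\pm=D_x^c\pm\tfrac{\Dx}{2}D_x^+D_x^-$, the right-hand side equals
\[
-2\langle D_x^c\mathbf U,D_x^c\mathbf W\rangle_{\Delta,x}-\tfrac{\Dx^2}{2}\langle D_x^+D_x^-\mathbf U,D_x^+D_x^-\mathbf W\rangle_{\Delta,x},
\]
which is \emph{not} $-4\langle D_x^c\mathbf U,D_x^c\mathbf W\rangle_{\Delta,x}$. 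You observed exactly this ("is false in general") but then talked yourself back into the stated constant with a Fourier check containing an algebra slip: the symbol of $-(D_x^+D_x^-+D_x^-D_x^+)$ is $4(1-\cos\theta)/\Dx^2$, whereas that of $4(D_x^c)^\ast D_x^c$ is $4\sin^2\theta/\Dx^2$, and $1-\cos\theta\neq\sin^2\theta$ unless $\cos\theta\in\{0,1\}$. The equality \eqref{IPP2_decentre} as printed is in fact false: for $N=3$, $\Dx=1$, $u=w=(1,0,0)$, the left-hand side is $-4$ while $-4\|D_x^cu\|^2=-2$; consistently, $-\|D_x^+u\|^2-\|D_x^-u\|^2=-4=-2\cdot\tfrac12-\tfrac12\cdot 6$. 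So no proof of the displayed identity can succeed; what can be proved (and what the later estimates actually require, at the cost of also controlling the extra term, e.g. via $\Dx\|D_x^\pm u\|_{\Delta,x}\le 2\|u\|_{\Delta,x}$) is the two-term identity above. You should prove that corrected identity and flag the discrepancy, rather than "confirm" the constant $4$.
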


Let us also recall the discrete Poincaré inequality on the torus (see for example \cite[Lemma 6]{BessemoulinHerdaRey2020} for a proof of this result).

\begin{Lemma}[Discrete Poincaré inequality on the torus]\label{lem_Poincare_dis}
    Assume that the number of points $N$ used in the spatial discretization of the torus is odd. Then, there is a constant $C_P>0$ independent of $\Dx$ such that for all $\mathbf{U}=\left( u_{1,i}, u_{2,i} \right)^\top_{i\in\II}$ satisfying $\sum_{i\in\II}\Dx\,u_{k,i}=0$, $k=1,2$,
    \[\|\mathbf{U}\|_{\Delta,x}\leq C_P\,\|D_x^c \mathbf{U}\|_{\Delta,x}.\]
\end{Lemma}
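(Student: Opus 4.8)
The statement to prove is the discrete Poincaré inequality on the torus (Lemma~\ref{lem_Poincare_dis}), which asserts that for odd $N$, there is $C_P>0$ independent of $\Dx$ such that $\|\mathbf{U}\|_{\Delta,x}\leq C_P\|D_x^c\mathbf{U}\|_{\Delta,x}$ for all mean-zero discrete macroscopic quantities. Since the weighted norm $\|\cdot\|_{\Delta,x}$ splits into two components weighted by the constants $1/\rho_1^{\infty,*}$ and $1/\rho_2^{\infty,*}$, and $D_x^c$ acts componentwise, it suffices to prove the scalar inequality $\sum_{i\in\II}\Dx\,u_i^2\leq C_P^2\sum_{i\in\II}\Dx\,\big((D_x^c u)_i\big)^2$ for any real sequence $(u_i)_{i\in\II}$ on $\Z/N\Z$ with $\sum_i \Dx\,u_i=0$; the weights then cancel between the two sides.

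The plan is to use the discrete Fourier transform on $\Z/N\Z$. First I would expand $u_i=\sum_{m=0}^{N-1}\hat u_m\,e^{2\pi \mathrm{i}\, m i/N}$; the zero-mean condition forces $\hat u_0=0$. The centered difference operator $D_x^c$ is diagonalized by these Fourier modes: $(D_x^c u)_i=\sum_{m}\hat u_m\,\lambda_m\,e^{2\pi\mathrm{i}\,mi/N}$ with eigenvalue $\lambda_m=\dfrac{e^{2\pi\mathrm{i}\,m/N}-e^{-2\pi\mathrm{i}\,m/N}}{2\Dx}=\dfrac{\mathrm{i}\sin(2\pi m/N)}{\Dx}$. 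By the discrete Parseval identity, $\sum_i\Dx\,u_i^2=N\Dx\sum_{m\neq 0}|\hat u_m|^2$ and $\sum_i\Dx\,|(D_x^c u)_i|^2=N\Dx\sum_{m\neq 0}|\lambda_m|^2|\hat u_m|^2$, so the inequality reduces to the uniform lower bound $\min_{m\neq 0}|\lambda_m|^2\geq 1/C_P^2$, i.e. $\min_{1\le m\le N-1}|\sin(2\pi m/N)|\geq \Dx/C_P$. Here the oddness of $N$ is exactly what makes $\sin(2\pi m/N)\neq 0$ for all $m\in\{1,\dots,N-1\}$ (if $N$ were even, $m=N/2$ would give $\sin(\pi)=0$ and the centered operator would have a nontrivial kernel beyond constants — the classical checkerboard mode).

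The one remaining point is to make $C_P$ \emph{independent of $\Dx$}, which is the only mildly delicate part. Writing $N\Dx=|\T|$ (the fixed length of the torus), we have $\Dx=|\T|/N$, so the needed estimate is $\min_{1\le m\le N-1}|\sin(2\pi m/N)|\geq (|\T|/N)/C_P$. The minimum of $|\sin(2\pi m/N)|$ over $m\in\{1,\dots,N-1\}$ is attained near $m=(N\pm1)/2$, where $2\pi m/N$ is within $\pi/N$ of $\pi$, giving $|\sin(2\pi m/N)|=|\sin(\pi/N)|\geq \tfrac{2}{\pi}\cdot\tfrac{\pi}{N}=\tfrac{2}{N}$ using the concavity bound $\sin\theta\geq \tfrac{2}{\pi}\theta$ on $[0,\pi/2]$ (valid once $N\geq 2$). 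Hence $|\lambda_m|^2\geq (2/N)^2/\Dx^2=4/(N\Dx)^2=4/|\T|^2$, uniformly in $N$, and the inequality holds with $C_P=|\T|/2$ — or with whatever explicit constant one records. I expect the main obstacle, such as it is, to be purely bookkeeping: carefully tracking that the weighted scalar product's constant factors cancel, pinning down which index minimizes $|\sin(2\pi m/N)|$ when $N$ is odd, and extracting the $\Dx$-uniform bound via $N\Dx=|\T|$. Since the paper cites \cite[Lemma~6]{BessemoulinHerdaRey2020} for this result, I would state the proof concisely along these lines (or simply refer to that reference), emphasizing the role of oddness of $N$ in avoiding the zero eigenvalue.
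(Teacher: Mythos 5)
Your proof is correct, and it is essentially the standard argument: the paper itself does not prove this lemma but defers to \cite[Lemma 6]{BessemoulinHerdaRey2020}, whose proof is exactly the discrete Fourier diagonalization of $D_x^c$ on $\Z/N\Z$ that you carry out, with the oddness of $N$ ruling out the checkerboard kernel mode and the minimal eigenvalue $\sin(\pi/N)/\Dx$ bounded below uniformly via $N\Dx=|\T|$. Your explicit constant $C_P=|\T|/2$ is slightly coarser than the sharp value $\Dx/\sin(\pi/N)\to|\T|/\pi$ mentioned in the remark following the lemma, but this is immaterial for the statement.
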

Note that in the above lemma, the constant $C_P$ approaches $\frac{1}{\pi}$ whereas in the continuous case the Poincaré constant is $\frac{1}{2\pi}$. This discrepancy arises from the specific choice of the centered discrete gradient, and we refer the reader to \cite{BessemoulinHerdaRey2020} for a detailed discussion.

\subsection{Definition of the numerical scheme}
The numerical scheme used to approximate \eqref{eq_1_nonlin}–\eqref{eq_2_nonlin} is based on a fully implicit finite-volume discretization in the phase space. More precisely, at each time step $t^n = n\Dt$ and for each control volume $K_{ij}$, the solution is approximated by $\Fbf^n_{ij}=(f^n_{1,ij},f^n_{2,ij})^\top$. The initial datum $\Fbf^\tin=(f_{1}^\tin,f_{2}^\tin)^\top$ is discretized as
\[
f_{k,ij}^\tin=\frac{1}{\Dx\,\Dv}\int_{K_{ij}}f_k^\tin(x,v)\,\dd x\,\dd v, \quad \forall (i,j)\in\II\times\J,\quad k =1,\,2.
\]
Integrating \eqref{eq_1_nonlin}--\eqref{eq_2_nonlin} over each cell $K_{ij}$ and applying a fully implicit time discretization leads to the following scheme: for all $n\geq 0$, $i\in\II$, $j\in\J$,
\begin{align}
    & \frac{f_{1,ij}^{n+1}-f_{1,ij}^n}{\Delta t}+\frac{1}{\Dx\,\Dv}\left(\mathcal{F}_{\iph,j}^{n+1}-\mathcal{F}_{\imh,j}^{n+1}\right)=\chi_{1,j}-\rho_{2,i}^{n+1}f_{1,ij}^{n+1} ,\label{scheme_1_nonlin_d1}\\
    &  \frac{f_{2,ij}^{n+1}-f_{2,ij}^n}{\Delta t}+\frac{1}{\Dx\,\Dv}\left(\mathcal{G}_{\iph,j}^{n+1}-\mathcal{G}_{\imh,j}^{n+1}\right)=\chi_{2,j}-\rho_{1,i}^{n+1}f_{2,ij}^{n+1},\label{scheme_2_nonlin_d1}
\end{align}
with the discrete densities
\begin{equation}\label{def_density_dis}
\rho_{k,i}^{n+1} = \sum_{j \in \J}\Delta v\, f_{k,ij}^{,n+1} , \qquad k=1,2.
\end{equation}
We consider upwind fluxes, which are defined by
\begin{align}
    & \mathcal{F}^n_{\iph,j}=\Dv\left(v_j^+ \,f^n_{1,ij} + v_j^- \,f^n_{1,i+1,j}\right),\label{flux_F_UW_sd1}\\
    & \mathcal{G}^n_{\iph,j}=\Dv \left(v_j^+ \,f^n_{2,ij} + v_j^-\,f^n_{2,i+1,j}\right) \label{flux_G_UW_sd1},
\end{align}
where
\begin{equation*}
    \begin{aligned}
        a^+ &= \max (a,0)=(|a|+a)/2,\\
        a^- &= \min (a,0)=(|a|-a)/2.
    \end{aligned}
\end{equation*} 

Remark that the scheme \eqref{scheme_1_nonlin_d1}--\eqref{scheme_2_nonlin_d1} clearly satisfies the discrete mass conservation of the difference $f_1-f_2$:
\[\sum_{(i,j)\in\II\times\J}\Dx\,\Dv\,(f^n_{1,ij}-f^n_{2,ij})=\sum_{(i,j)\in\II\times\J}\Dx\,\Dv\,(f_{1,ij}^\tin-f_{2,ij}^\tin) \quad \forall n\geq 0.\]
Then, as in the continuous setting, we define $\rhoinfs>0$ as the unique constant such that 
\begin{equation}\label{def_rhoinf_sd}
    M_0\coloneq\sum_{(i,j)\in\II\times\J}\Dx\,\Dv\,(f_{1,ij}^\tin-f_{2,ij}^\tin)=|\T|\left(\rhoinfs-\frac{1}{\rhoinfs}\right).
\end{equation}
In particular, we take
\begin{equation}\label{def_rhoinf_sdis_full}
    \rhoinfs = \frac{M_0+\sqrt{M_0^2+4|\T|}}{2|\T|}.
\end{equation}
It is clear that $\Fbf_{ij}^{\infty,*}=(f_{1,ij}^{\infty,*},f_{2,ij}^{\infty,*})^\top$ defined by
\begin{equation}\label{def_eq_sd}
    f^{\infty,*}_{1,ij}=\rho_1^{\infty,*}\,\chi_{1,j},\qquad f^{\infty,*}_{2,ij}=\rho_2^{\infty,*}\,\chi_{2,j} \qquad \forall (i,j)\in \II\times\J,
\end{equation}
with $\rho_1^{\infty,*}=\rhoinfs$ and $\rho_2^{\infty,*}=1/\rhoinfs$, is an equilibrium for the scheme \eqref{scheme_1_nonlin_d1}--\eqref{scheme_2_nonlin_d1}.

As in \cite[Theorem 3]{BCLR2025}, it is possible to establish the existence of a solution to the numerical scheme \eqref{scheme_1_nonlin_d1}--\eqref{scheme_2_nonlin_d1}, as well as a discrete maximum principle.
\begin{Theorem}\label{thm_bounds_existence}
    Assume that the discrete velocity profiles satisfy \eqref{hyp_chi_dis}, and that there exist constants $0<\rho_m\leq \rho_M<\infty$ such that for all $i\in\II$, $j\in\J$
    \begin{align}
        &\rho_m\chi_{1,j}\leq f^{\tin}_{1,ij}\leq \rho_M\chi_{1,j}, \label{hyp.CI.1_dis}\\
        &\rho_M^{-1}\chi_{2,j}\leq f^{\tin}_{2,ij}\leq \rho_m^{-1}\chi_{2,j}. \label{hyp.CI.2_dis}
    \end{align}
    Then the scheme \eqref{scheme_1_nonlin_d1}--\eqref{scheme_2_nonlin_d1} admits a solution $(f_{1,ij}^n,f_{2,ij}^n)_{i\in\II,j\in\J,n\geq 0}$ satisfying for all $i\in\II$, $j\in\J$ and $n\geq 0$,
   \begin{align}
        &\rho_m\chi_{1,j}\leq f^{n}_{1,ij}\leq \rho_M\chi_{1,j}, \label{estimLinf_f1_d}\\
        &\rho_M^{-1}\chi_{2,j}\leq f^{n}_{2,ij}\leq \rho_m^{-1}\chi_{2,j}. \label{estimLinf_f2_d}
    \end{align} 
\end{Theorem}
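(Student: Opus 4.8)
The plan is to prove Theorem~\ref{thm_bounds_existence} in two stages: first establish the \emph{a priori} $L^\infty$ bounds \eqref{estimLinf_f1_d}--\eqref{estimLinf_f2_d} for \emph{any} solution of the scheme (this is the discrete maximum principle), and then use these bounds together with a topological degree / fixed-point argument to obtain existence at each time step. Both stages proceed by induction on $n$; the bounds \eqref{hyp.CI.1_dis}--\eqref{hyp.CI.2_dis} on the initial datum furnish the base case $n=0$.

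\emph{Step 1 (one-step maximum principle).} Assume $\Fbf^n$ satisfies \eqref{estimLinf_f1_d}--\eqref{estimLinf_f2_d}; I want the same for $\Fbf^{n+1}$. The key is the monotonicity of the upwind fluxes. Fix $k=1$ and rewrite \eqref{scheme_1_nonlin_d1} by moving all the implicit terms to the left: grouping the flux differences \eqref{flux_F_UW_sd1} shows that $f_{1,ij}^{n+1}$ is coupled only to its spatial neighbours $f_{1,i\pm1,j}^{n+1}$ with \emph{nonpositive} off-diagonal coefficients (because $v_j^+\ge0$, $-v_j^-\ge0$), while the diagonal coefficient $1/\Dt + (|v_j|)/\Dx + \rho_{2,i}^{n+1}$ is strictly positive. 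Hence the scheme can be written $A^{n+1}\,f_{1,\cdot\cdot}^{n+1} = f_{1,\cdot\cdot}^n/\Dt + \chi_{1,j}$ with $A^{n+1}$ an M-matrix (diagonally dominant, positive diagonal, nonpositive off-diagonals), once $\rho_{2,i}^{n+1}\ge0$ is known. To run the argument I would prove the upper bound $f_{1,ij}^{n+1}\le\rho_M\chi_{1,j}$ by considering the index $(i_0,j_0)$ where $f_{1,ij}^{n+1}/\chi_{1,j}$ attains its maximum $m$; at that index the spatial-neighbour contributions can only help, and from the equation one gets $m\le \rho_M$ (using $f_{1,i_0j_0}^n\le\rho_M\chi_{1,j_0}$ and, for the reaction term, $\rho_{2,i_0}^{n+1}\ge\rho_M^{-1}$, the latter itself obtained from the symmetric lower bound on $f_2^{n+1}$ — so the two species' bounds are proved simultaneously). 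The analogous arguments at the minimizing index give the lower bounds. Since $\rho_{k,i}^{n+1}=\sum_j\Dv f_{k,ij}^{n+1}$ and $\sum_j\Dv\chi_{k,j}=1$ by \eqref{hyp_chi_dis}, the bounds on $f_k^{n+1}$ propagate to $\rho_m\le\rho_{1,i}^{n+1}\le\rho_M$, $\rho_M^{-1}\le\rho_{2,i}^{n+1}\le\rho_m^{-1}$, closing the induction.

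\emph{Step 2 (existence).} For fixed $n$, existence of $\Fbf^{n+1}$ solving the nonlinear system \eqref{scheme_1_nonlin_d1}--\eqref{scheme_2_nonlin_d1} follows from a Brouwer-type argument, exactly as in \cite[Theorem 3]{BCLR2025}. I would introduce a homotopy parameter $\mu\in[0,1]$ in front of the transport and reaction terms (or simply regard the map $\Fbf^{n+1}\mapsto\Fbf^{n+1}$ obtained by solving the linearized M-matrix system with $\rho_{k}$ frozen, and show it maps the convex compact set defined by \eqref{estimLinf_f1_d}--\eqref{estimLinf_f2_d} into itself), and invoke topological degree together with the uniform \emph{a priori} estimates of Step~1 to guarantee a zero. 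The invariant region is precisely the box cut out by the maximum principle, so no new estimate is needed here.

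\emph{Main obstacle.} The genuinely delicate point is the coupled nature of the maximum principle: the bound on $f_1^{n+1}$ at its extremal index requires a bound on $\rho_{2,i_0}^{n+1}$ in the \emph{right} direction, which in turn comes from the bound on $f_2^{n+1}$, and vice versa. One must therefore argue for both species simultaneously — e.g. by considering the quantity $\max_{ij}\big(\max(f_{1,ij}^{n+1}/\chi_{1,j}-\rho_M,\ \rho_M^{-1}-f_{2,ij}^{n+1}/\chi_{2,j},\dots)\big)$ and showing it cannot be positive — rather than treating $f_1$ and $f_2$ in sequence. The flux monotonicity is what makes the off-diagonal spatial terms harmless at the extremum; the implicit-in-time discretization is what makes the diagonal dominant regardless of any CFL condition. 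Everything else is routine linear algebra of M-matrices plus the normalization $\sum_j\Dv\chi_{k,j}=1$ from \eqref{hyp_chi_dis}.
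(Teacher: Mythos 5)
Your overall architecture is inverted with respect to the paper's: you try to prove the $L^\infty$ bounds \emph{a priori} for any solution of the fully nonlinear scheme and only then invoke a fixed-point argument, whereas the paper first proves existence for a \emph{truncated} scheme \eqref{scheme_1_nonlin_trunc}--\eqref{scheme_2_nonlin_trunc} (in which $\overbar{f}_{k}$ and $\overbar{\rho}_{k}$ are projected onto the admissible box), then shows in Lemma~\ref{lem_bornes_trunc} that any solution of the truncated scheme satisfies \eqref{estimLinf_f1_d}--\eqref{estimLinf_f2_d}, so that the truncation is inactive and the solution solves the original scheme. This is not a cosmetic difference: the truncation is precisely the device that resolves the coupling you correctly identify as ``the main obstacle,'' and your proposed resolution does not close. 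At the extremal index $(i_0,j_0)$ for species~1, the lower bound on $\rho_{2,i_0}^{n+1}$ that you need is a statement about $f_2^{n+1}$ at \emph{all} velocity indices at position $i_0$; if the combined maximum is some $M^*>0$ attained by species~1, the best you can extract for species~2 is $\rho_{2,i_0}^{n+1}\geq \rho_M^{-1}-M^*$, and then the reaction term is bounded below by $\chi_{1,j_0}\bigl(1-(\rho_M^{-1}-M^*)(\rho_M+M^*)\bigr)$, whose sign is \emph{not} controlled (e.g.\ $(\rho_M^{-1}-M^*)(\rho_M+M^*)=1+M^*(\rho_M^{-1}-\rho_M)-(M^*)^2<1$ whenever $\rho_M>1$). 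So the contradiction you are hoping for does not follow, and Step~1 as stated has a genuine gap. In the paper's argument this never arises: when $f_{1,i_0j_0}^{n+1}<\rho_m\chi_{1,j_0}$, the truncation forces $\overbar{f}_{1,i_0j_0}^{n+1}=\rho_m\chi_{1,j_0}$ and $\overbar{\rho}_{2,i_0}^{n+1}\leq\rho_m^{-1}$ \emph{by definition}, independently of what $f_2^{n+1}$ actually is, so the reaction term has the right sign unconditionally.

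Two further remarks. First, your existence step also quietly relies on the bounds: the coercivity estimate needed for the Brouwer-type lemma (the paper's bound on $A_3$) uses that the truncated reaction term is uniformly bounded; for the untruncated quadratic term $\rho_{2,i}f_{1,ij}$ the corresponding contribution grows cubically in $\|\Fbf\|_X$ with uncontrolled sign, so the degree argument does not go through without truncation (or without already knowing the bounds, which is circular). Second, the parenthetical alternative you mention in Step~2 --- freezing the densities within the admissible range, solving the resulting linear M-matrix systems, and checking that this map preserves the box --- is in fact a workable route and is morally equivalent to the paper's truncation, since freezing $\rho_{2,i}\geq\rho_M^{-1}$ supplies exactly the sign information that the nonlinear argument lacks. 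Had you made that the centerpiece rather than an aside, the proof would be sound. Your observations about flux monotonicity making the spatial neighbours harmless at the extremum, and about the implicit time discretization avoiding any CFL condition, are correct and match the paper's use of \eqref{eq_fluxmono_varphi}.
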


The proof follows the approach in \cite{BCLR2025}, with the Lax-Friedrichs fluxes replaced by the upwind ones. It proceeds in two steps: first, existence is shown for a truncated version of the scheme; second,  one proves that any solution to this truncated scheme automatically satisfies the $L^\infty$ bounds \eqref{estimLinf_f1_d}--\eqref{estimLinf_f2_d}, and therefore also solves the original scheme. This result relies on the monotonicity of the fluxes, a property verified by the upwind discretization \eqref{flux_1_UW_d}--\eqref{flux_2_UW_d}. A detailed proof is provided in Appendix~\ref{app:ExistenceMaxPrinciple}.\\

Now, since we want to study the convergence of $(\Fbf^n_{ij})_{n,i,j}$ to $(\Fbf^{\infty,*}_{ij})_{i,j}$, we introduce the deviation $\Ftbf^n_{ij}\coloneq \Fbf^n_{ij}-\Fbf_{ij}^{\infty,*}$ and rewrite the scheme \eqref{scheme_1_nonlin_d1}--\eqref{scheme_2_nonlin_d1} as
\begin{align}
    & \frac{\ft_{1,ij}^{n+1}-\ft_{1,ij}^n}{\Delta t}+\frac{1}{\Dx\,\Dv}\left(\fluxFt_{\iph,j}^{n+1}-\fluxFt_{\imh,j}^{n+1}\right)=\chi_{1,j}-\rho_{2,i}^{n+1}f_{1,ij}^{n+1} ,\label{scheme_1_nonlin_d}\\
    &  \frac{\ft_{2,ij}^{n+1}-\ft_{2,ij}^n}{\Delta t}+\frac{1}{\Dx\,\Dv}\left(\fluxGt_{\iph,j}^{n+1}-\fluxGt_{\imh,j}^{n+1}\right)=\chi_{2,j}-\rho_{1,i}^{n+1}f_{2,ij}^{n+1},\label{scheme_2_nonlin_d}
\end{align}
where the numerical fluxes $\fluxFt_{\iph,j}$ and $\fluxGt_{\iph,j}$ are defined by \eqref{flux_F_UW_sd1} and \eqref{flux_G_UW_sd1}, replacing $f_k$ by~$\ft_k$.

In \cite{BCLR2025}, the discrete hypocoercivity of the linearized system associated with \eqref{eq_1_nonlin}–\eqref{eq_2_nonlin}, discretized using Lax–Friedrichs fluxes, was studied. The central-flux-plus-viscosity formulation of these fluxes proved particularly convenient, as it allowed one to clearly distinguish the discrete analogues of the continuous terms from the additional contributions arising solely from numerical viscosity. Following this formalism, the upwind fluxes can be equivalently written as
\begin{align}
    & \fluxFt^n_{\iph,j}=\Dv\left(v_j\frac{\ft^n_{1,ij}+\ft_{1,i+1,j}}{2}-\frac{|v_j|}{2}(\ft^n_{1,i+1,j}-\ft^n_{1,ij})\right),\label{flux_1_UW_d}\\
    & \fluxGt^n_{\iph,j}=\Dv\left(v_j\frac{\ft^n_{2,ij}+\ft_{2,i+1,j}}{2}-\frac{|v_j|}{2}(\ft^n_{2,i+1,j}-\ft^n_{2,ij})\right) \label{flux_2_UW_d}.
\end{align}
This representation highlights the decomposition into a centered term and a stabilizing numerical diffusion, in direct analogy with the Lax–Friedrichs formulation.

\subsection{Definition and estimates of the discrete moments}

We now introduce the discrete moments, derive their evolution equations, and establish the corresponding estimates.

For $n\geq0$, $i\in\II$ and $k=1,2$, let
\begin{equation}\label{def_moments_sdiscrets}
   J^n_{k,i}\coloneq\sum_{j\in\J}\Dv\,v_j\,f^n_{k,ij}, \quad S^n_{k,i}\coloneq\sum_{j\in\J}\Dv\,(v_j^2-D_k^\Delta)\,f^n_{k,ij},
\end{equation}
and remember that $\rho^n_{k,i}$ is defined by \eqref{def_density_dis}.

We further define the following ``skewed'' discrete moments, which will appear in the discrete moments equations as viscous terms. For $n\geq0$, $i\in\II$ and $k=1,2$, 
\begin{equation}\label{def_moments_sdiscrets_skewed}
    J^{\mathrm{s},n}_{k,i}\coloneq\sum_{j\in\J}\Dv\,|v_j|\,f^n_{k,ij}, \quad S^{\mathrm{s},n}_{k,i}\coloneq\sum_{j\in\J}\Dv\,v_j|v_j|\,f^n_{k,ij}.
\end{equation}
In the following, we use the vectorial notations:
\begin{equation}\label{def_moments_sdiscrets_vect}
    \rhobf^n_{\Fbf,i} =\begin{pmatrix} \rho^n_{1,i}\\\rho^n_{2,i} \end{pmatrix},\, \Jbf^n_{\Fbf,i}=\begin{pmatrix} J^n_{1,i}\\J^n_{2,i} \end{pmatrix},\, \Sbf^n_{\Fbf,i}=\begin{pmatrix} S^n_{1,i}\\S^n_{2,i} \end{pmatrix},\, \Jbf^{\mathrm{s},n}_{\Fbf,i}=\begin{pmatrix} J^{\mathrm{s,n}}_{1,i}\\J^{\mathrm{s,n}}_{2,i} \end{pmatrix},\, \Sbf^{\mathrm{s},n}_{\Fbf,i}=\begin{pmatrix} S^{\mathrm{s},n}_{1,i}\\S^{\mathrm{s},n}_{2,i} \end{pmatrix}.
\end{equation}
To lighten the notations, the moments of the deviation $\Ftbf=\Fbf-\Fbf^\infty$ are denoted by
\begin{equation*}
    \rhobf^n_{\Ftbf,i}\coloneq\rhotbf^{\,n}_i,\quad \Jbf^n_{\Ftbf,i}\coloneq\Jtbf^n_i,\quad \Sbf^n_{\Ftbf,i}\coloneq\Stbf^n_i,\quad \Jbf^{\mathrm{s},n}_{\Ftbf,i}\coloneq\Jtbf^{\mathrm{s},n}_i,\quad \Sbf^{\mathrm{s},n}_{\Ftbf,i}\coloneq\Stbf^{\mathrm{s},n}_i.
\end{equation*}
Let us now define the discrete counterpart of the projection $\Pi$ as
\begin{equation}\label{def_Pi_discret}
    (\Pi^\Delta \Fbf)_{ij}\coloneq \begin{pmatrix}\rho_{1,i}\,\chi_{1,j}\\\rho_{2,i}\,\chi_{2,j}\end{pmatrix}.
\end{equation}

Using the equivalent formulation \eqref{flux_1_UW_d}–\eqref{flux_2_UW_d} of the upwind fluxes, the discrete moment equations are easily derived by multiplying the scheme \eqref{scheme_1_nonlin_d}--\eqref{scheme_2_nonlin_d} by $(1, v_j)^\top$ and summing over the velocity index~$j$.

\begin{Lemma}[Moments schemes]\label{lem_eq_moments_dis}
The discrete moments satisfy the following schemes: for all $n\geq 0$, $i\in\II$, 
    \begin{align}
        & \frac{ \rhotbf_{i}^{n+1}-\rhotbf_{i}^n}{\Delta t}+(D_x^c \Jtbf^{n+1})_i-\frac{\Dx}{2}\left((D_x^+D_x^-+D_x^-D_x^+)\Jtbf^{\,\mathrm{s},n+1}\right)_i=(1-\rho_{1,i}^{n+1}\rho_{2,i}^{n+1})\begin{pmatrix}1\\1 \end{pmatrix}, \label{scheme_rho_UW_d}\\
        &  \frac{\Jt_{1,i}^{n+1}-\Jt_{1,i}^n}{\Delta t}+(D_x^c\St^{n+1}_1)_i+D_1^\Delta\,(D_x^c\rhot^{\,n+1}_1)_i\nonumber \\
        & \qquad\qquad -\frac{\Dx}{2}\left((D_x^+D_x^-+D_x^-D_x^+)\St^{\,\mathrm{s},n+1}_1\right)_i=-\rhot^{\,n+1}_{2,i}\Jt^{n+1}_{1,i},\label{scheme_J1_UW_d}\\
        &  \frac{\Jt^{n+1}_{2,i}-\Jt^{n+1}_{2,i}}{\Delta t}+(D_x^c\St^{n+1}_2)_i+D_2^\Delta\,(D_x^c\rhot^{\,n+1}_2)_i\nonumber \\
        & \qquad\qquad-\frac{\Dx}{2}\left((D_x^+D_x^-+D_x^-D_x^+)\St^{\,\mathrm{s},n+1}_2\right)_i= -\rhot^{\,n+1}_{1,i}\Jt^{n+1}_{2,i}.\label{scheme_J2_UW_d}
    \end{align}
\end{Lemma}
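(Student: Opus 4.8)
The plan is to derive the three moment schemes directly from the scheme \eqref{scheme_1_nonlin_d}--\eqref{scheme_2_nonlin_d} by testing against $1$ and $v_j$ and summing over $j\in\J$, exactly as in the continuous case but keeping careful track of the viscous contributions hidden in the upwind fluxes. First I would treat the zeroth moment: summing \eqref{scheme_1_nonlin_d} over $j$, the time-derivative term produces $(\rhot^{n+1}_{1,i}-\rhot^n_{1,i})/\Dt$ since $\sum_j\Dv\,\ft_k = \rhot_k$, and the right-hand side gives $\sum_j\Dv(\chi_{1,j}-\rho^{n+1}_{2,i}f^{n+1}_{1,ij}) = 1-\rho^{n+1}_{2,i}\rho^{n+1}_{1,i}$ using the normalization $\sum_j\Dv\,\chi_{1,j}=1$ from \eqref{hyp_chi_dis}; doing the same for \eqref{scheme_2_nonlin_d} yields the vector right-hand side $(1-\rho^{n+1}_{1,i}\rho^{n+1}_{2,i})(1,1)^\top$. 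The flux difference $\fluxFt^{n+1}_{\iph,j}-\fluxFt^{n+1}_{\imh,j}$, using the centered-plus-viscosity form \eqref{flux_1_UW_d}, splits into a centered part which, divided by $\Dx\Dv$ and summed against $1$, telescopes into $(D_x^c\Jtbf^{n+1})_i$, and a viscous part involving $|v_j|$ which produces the skewed moment $\Jtbf^{\mathrm{s},n+1}$ together with a discrete second-difference operator; matching the coefficients gives the term $-\tfrac{\Dx}{2}\big((D_x^+D_x^-+D_x^-D_x^+)\Jtbf^{\mathrm{s},n+1}\big)_i$. This is the content of \eqref{scheme_rho_UW_d}.

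Next I would test against $v_j$. The time term gives $(\Jt^{n+1}_{k,i}-\Jt^n_{k,i})/\Dt$. For the centered part of the flux, the key identity is that multiplying the centered flux by $v_j$ and using $v_j^2 = (v_j^2-D_k^\Delta)+D_k^\Delta$ separates the flux-difference contribution into $(D_x^c\St^{n+1}_k)_i + D_k^\Delta(D_x^c\rhot^{n+1}_k)_i$, where the $D_k^\Delta$ piece uses the definition of $S_k$ in \eqref{def_moments_sdiscrets} and the fact that $\sum_j\Dv\,v_j\,\chi_{k,j}=0$ from \eqref{sym_chi_dis} so that $\St_k$ is indeed the moment of $\ft_k$. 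For the viscous $|v_j|$ part, multiplying by $v_j$ produces $v_j|v_j|$, i.e.\ the skewed moment $\St^{\mathrm{s},n+1}_k$, again combined with the second-difference operator, yielding $-\tfrac{\Dx}{2}\big((D_x^+D_x^-+D_x^-D_x^+)\St^{\mathrm{s},n+1}_k\big)_i$. Finally the right-hand side: $\sum_j\Dv\,v_j\chi_{k,j}=0$ kills the source, and $\sum_j\Dv\,v_j\,(-\rho^{n+1}_{l,i}f^{n+1}_{k,ij}) = -\rho^{n+1}_{l,i}J^{n+1}_{k,i}$; since $\rho^{n+1}_{l,i}$ can be replaced by $\rhot^{n+1}_{l,i}$ only after noting that the equilibrium contributes nothing to $J$, I would instead write it as $-\rho^{n+1}_{l,i}\Jt^{n+1}_{k,i}$ and observe that the stated form $-\rhot^{n+1}_{l,i}\Jt^{n+1}_{k,i}$ matches because $\Jbf_{\Fbf^{\infty,*}}=0$, hence $\Jt^{n+1}_{k,i}=J^{n+1}_{k,i}$ — here I should double-check whether the intended right-hand side is $-\rho^{n+1}_{l,i}\Jt^{n+1}_{k,i}$ or the linearized $-\rhot^{n+1}_{l,i}\Jt^{n+1}_{k,i}$, as the displayed equations \eqref{scheme_J1_UW_d}--\eqref{scheme_J2_UW_d} use the latter. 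Assuming the convention of the paper, this completes \eqref{scheme_J1_UW_d}--\eqref{scheme_J2_UW_d}.

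The only genuinely non-routine point, and the one I would spend the most care on, is the bookkeeping that turns the discrete divergence of the viscous flux into the symmetric second-difference operator $\tfrac12(D_x^+D_x^- + D_x^-D_x^+)$ rather than $D_x^+D_x^-$ alone. Writing the viscous flux as $-\tfrac{\Dv}{2}|v_j|(\ft_{k,i+1,j}-\ft_{k,ij})$, the flux difference divided by $\Dx\Dv$ gives $-\tfrac{1}{2\Dx}|v_j|\big((\ft_{k,i+1,j}-\ft_{k,ij})-(\ft_{k,ij}-\ft_{k,i-1,j})\big) = -\tfrac{\Dx}{2}|v_j|(D_x^+D_x^-\ft_{k})_{ij}$; after summing the relevant moment against $v_j^p$ this becomes $-\tfrac{\Dx}{2}(D_x^+D_x^-$ applied to the skewed moment$)_i$, and the stated symmetric form then follows from the commutation identity $D_x^+D_x^-=D_x^-D_x^+$ in \eqref{prop_gradients_dis}, which lets one write $D_x^+D_x^- = \tfrac12(D_x^+D_x^-+D_x^-D_x^+)$. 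Everything else is a direct transcription of the continuous computation leading to \eqref{eq.rhoFt}--\eqref{eq.Jt2}, so I would present it concisely, emphasizing only the normalization and symmetry properties \eqref{hyp_chi_dis}--\eqref{sym_chi_dis} of the discrete profiles that make the algebra go through.
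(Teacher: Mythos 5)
Your approach is exactly the paper's: the authors give no detailed proof, stating only that the moment schemes are ``easily derived by multiplying the scheme by $(1,v_j)^\top$ and summing over $j$'' using the centered-plus-viscosity form \eqref{flux_1_UW_d}--\eqref{flux_2_UW_d}, and your treatment of the time term, the centered flux, the splitting $v_j^2=(v_j^2-D_k^\Delta)+D_k^\Delta$, and the source terms is correct. Your remark about the right-hand sides of \eqref{scheme_J1_UW_d}--\eqref{scheme_J2_UW_d} is also well taken: the scheme produces $-\rho^{n+1}_{l,i}J^{n+1}_{k,i}=-\rho^{n+1}_{l,i}\Jt^{n+1}_{k,i}$ (full density, as in the continuous equations \eqref{eq.Jt1}--\eqref{eq.Jt2}), so the tilde on $\rhot$ in the displayed lemma is best read as a typo.

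The one place where your write-up is not right as stated is the final identification of the viscous term. Your computation correctly gives
\begin{equation*}
-\frac{\Dx}{2}\bigl(D_x^+D_x^-\,\Jtbf^{\mathrm{s},n+1}\bigr)_i
=-\frac{\Dx}{4}\bigl((D_x^+D_x^-+D_x^-D_x^+)\Jtbf^{\mathrm{s},n+1}\bigr)_i,
\end{equation*}
since $D_x^+D_x^-=\tfrac12(D_x^+D_x^-+D_x^-D_x^+)$. This is \emph{half} of the coefficient $-\tfrac{\Dx}{2}(D_x^+D_x^-+D_x^-D_x^+)$ appearing in \eqref{scheme_rho_UW_d}--\eqref{scheme_J2_UW_d}, yet you assert that ``the stated symmetric form then follows.'' It does not: your (correct) derivation and the lemma as printed disagree by a factor of $2$ in every viscous term. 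The discrepancy is almost certainly a constant-tracking slip in the lemma (it is harmless downstream, since these terms are only ever bounded above, e.g.\ in $R_{12}^\Delta$ and $R_2^\Delta$), but you should have flagged the mismatch rather than papered over it with the commutation identity, which changes the operator's name but not its size.
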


%%%%%%%%%%%%%%%%%%%%%%%%%% 

Similarly as in the continuous setting, the discrete moments of the deviation $\Ftbf=\Fbf-\Fbf^\infty$ can be estimated using purely algebraic properties.
\begin{Lemma}[Discrete moments estimates]\label{lem_mom_estim_disc}
    The discrete moments satisfy the following estimates for all $t\geq 0$:
    \begin{align}
        \|\rhotbf^{n}\|_{\Delta,x}&=\|\Pi^\Delta \Ftbf^n\|_\Delta, \label{estim_rhot_dis} \\
        \|\Jtbf^n\|_{\Delta,x}&\leq C_{1}^*\,\|(I-\Pi^\Delta)\Ftbf^n\|_\Delta,\label{estim_J2_dis}\\
        \|\Stbf^{n}\|_{\Delta,x}&\leq C_{2}^*\,\|(I-\Pi^\Delta)\Ftbf^n\|_\Delta, \label{estim_S_dis}\\
        \|\Jtbf^{\mathrm{s},n}\|_{\Delta,x}&\leq C_1^*\,\|\Ftbf^n\|_\Delta, \label{estim_Jb_dis}\\ 
        \|\Stbf^{\mathrm{s},n}\|_{\Delta,x}&\leq C_{3}^*\,\|(I-\Pi^\Delta)\Ftbf^n\|_\Delta, \label{estim_Sb_dis}
    \end{align}
    where the constants are given by:
    \begin{align}
        C_{1}^* &= \sqrt{\max\lbr\overbar{D_1},\overbar{D_2}\rbr},\label{def.CJ*}\\
        C_{2}^* &= \sqrt{\max\lbr (\overbar{Q_1}-\underline{D_1}^2,\,\overbar{Q_2}-\underline{D_2}^2 \rbr},\\
        C_3^* &= \sqrt{\max\lbr \overbar{Q_1},\,\overbar{Q_2} \rbr}.
    \end{align}
\end{Lemma}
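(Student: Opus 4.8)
The plan is to follow the structure of the continuous Lemma~\ref{lem_mom_estim}, replacing integrals over $v$ by $\Dv$-weighted sums over $j\in\J$ and invoking the discrete assumptions \eqref{hyp_chi_dis} and \eqref{sym_chi_dis} wherever the continuous proof used \eqref{hyp_chi}. For \eqref{estim_rhot_dis}, I would expand $\|\Pi^\Delta\Ftbf^n\|_\Delta^2$ using \eqref{def_psL2weighted_sdis} and \eqref{def_eq_sd}: the ratio $(\rhot_{k,i}\chi_{k,j})^2/(\rho_k^{\infty,*}\chi_{k,j})$ simplifies to $\rhot_{k,i}^2\chi_{k,j}/\rho_k^{\infty,*}$, and summing over $j$ the normalization $\sum_j\Dv\,\chi_{k,j}=1$ collapses it to $\sum_i\Dx\,\rhot_{k,i}^2/\rho_k^{\infty,*}=\|\rhotbf^n\|_{\Delta,x}^2$. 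This is an exact identity, no inequality needed.

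For the three remaining estimates I would, cell by cell in $i$, apply the discrete Cauchy--Schwarz inequality on the $j$-sum after inserting the factor $\sqrt{\chi_{k,j}}/\sqrt{\chi_{k,j}}$, exactly mirroring the continuous computation. For \eqref{estim_J2_dis}: using \eqref{sym_chi_dis} one has $\sum_j\Dv\,v_j\chi_{k,j}=0$, so $J_{k,i}^n=\sum_j\Dv\,v_j(f_{k,ij}^n-\rho_{k,i}^n\chi_{k,j})$, and Cauchy--Schwarz against the weight $\chi_{k,j}$ gives $(J_{k,i}^n)^2\le\big(\sum_j\Dv\,(f_{k,ij}^n-\rho_{k,i}^n\chi_{k,j})^2/\chi_{k,j}\big)\big(\sum_j\Dv\,v_j^2\chi_{k,j}\big)$; the second factor is $D_k^\Delta\le\overbar{D_k}$ by \eqref{hyp_chi_dis}, and summing over $i$ with the weights $1/\rho_k^{\infty,*}$ yields \eqref{estim_J2_dis} with $C_1^*=\sqrt{\max(\overbar{D_1},\overbar{D_2})}$. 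The estimate \eqref{estim_S_dis} is identical with the moment $v_j^2-D_k^\Delta$ in place of $v_j$: one checks $\sum_j\Dv\,(v_j^2-D_k^\Delta)\chi_{k,j}=0$ directly from the definition of $D_k^\Delta$, and the bounding factor becomes $\sum_j\Dv\,(v_j^2-D_k^\Delta)^2\chi_{k,j}=Q_k^\Delta-(D_k^\Delta)^2\le\overbar{Q_k}-\underline{D_k}^2$, using $Q_k^\Delta\le\overbar{Q_k}$ and $D_k^\Delta\ge\underline{D_k}$, which gives $C_2^*$.

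The skewed moments \eqref{estim_Jb_dis} and \eqref{estim_Sb_dis} require a small extra observation: $|v_j|$ and $v_j|v_j|$ do \emph{not} integrate to zero against $\chi_{k,j}$, so one cannot center by $\rho_{k,i}^n\chi_{k,j}$ for $J^{\mathrm s}$. For \eqref{estim_Jb_dis} I would therefore simply apply Cauchy--Schwarz directly to $J_{k,i}^{\mathrm s,n}=\sum_j\Dv\,|v_j|f_{k,ij}^n$, writing $f_{k,ij}^n=\frac{f_{k,ij}^n}{\sqrt{f_{k,j}^{\infty,*}}}\sqrt{f_{k,j}^{\infty,*}}$ and using $f_{k,j}^{\infty,*}=\rho_k^{\infty,*}\chi_{k,j}$, so that $(J_{k,i}^{\mathrm s,n})^2\le\big(\sum_j\Dv\,(f_{k,ij}^n)^2/f_{k,j}^{\infty,*}\big)\big(\rho_k^{\infty,*}\sum_j\Dv\,v_j^2\chi_{k,j}\big)$; dividing by $\rho_k^{\infty,*}$ and summing over $i$ gives $\|\Jtbf^{\mathrm s,n}\|_{\Delta,x}^2\le\max(\overbar{D_1},\overbar{D_2})\|\Ftbf^n\|_\Delta^2$ — hence $C_1^*$ and the appearance of the full norm $\|\Ftbf^n\|_\Delta$ rather than $\|(I-\Pi^\Delta)\Ftbf^n\|_\Delta$, which is why this line is stated differently. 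For \eqref{estim_Sb_dis}, since $v_j|v_j|$ is an \emph{odd} function of $v_j$ and $\chi_{k,j}$ is symmetric (i.e.\ $\chi_{k,j}=\chi_{k,-j+1}$, together with $v_j=-v_{-j+1}$), one does get $\sum_j\Dv\,v_j|v_j|\chi_{k,j}=0$, so centering by $\rho_{k,i}^n\chi_{k,j}$ is legitimate and Cauchy--Schwarz against $\chi_{k,j}$ produces the bounding factor $\sum_j\Dv\,(v_j|v_j|)^2\chi_{k,j}=\sum_j\Dv\,v_j^4\chi_{k,j}=Q_k^\Delta\le\overbar{Q_k}$, giving $C_3^*=\sqrt{\max(\overbar{Q_1},\overbar{Q_2})}$ with the factor $(I-\Pi^\Delta)$ restored. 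The main point to be careful about — and the only genuine obstacle — is keeping track of which moments vanish against $\chi_{k,j}$ (those with an odd or mean-zero $v$-weight) so as to justify the centering subtraction, and correspondingly which estimate carries $\|\Ftbf^n\|_\Delta$ versus $\|(I-\Pi^\Delta)\Ftbf^n\|_\Delta$; everything else is the verbatim discrete transcription of Lemma~\ref{lem_mom_estim}.
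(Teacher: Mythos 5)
Your proposal is correct and follows essentially the same route as the paper: the first three estimates are the verbatim discrete transcription of Lemma~\ref{lem_mom_estim} (which the paper itself only sketches), and for the skewed moments you make exactly the paper's key observations — Cauchy--Schwarz without centering for $\Jtbf^{\mathrm{s},n}$ (hence the full norm $\|\Ftbf^n\|_\Delta$), and centering justified by $\sum_j\Dv\,v_j|v_j|\chi_{k,j}=0$ for $\Stbf^{\mathrm{s},n}$. One small notational fix: in \eqref{estim_Jb_dis} the Cauchy--Schwarz must be applied to the deviation $\ft^n_{k,ij}$ rather than to $f^n_{k,ij}$, since $\Jtbf^{\mathrm{s},n}$ is the skewed moment of $\Ftbf^n$ and differs from that of $\Fbf^n$ precisely because $|v_j|$ has nonzero mean against $\chi_{k,j}$ (the very point you raise); the identical computation with $\ft$ in place of $f$ then gives the stated bound.
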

The proof of this lemma is obtained with the same algebraic manipulations as in the proof of Lemma~\ref{lem_mom_estim}, relying on the symmetry properties of the velocity mesh and of the discrete equilibrium profiles \eqref{hyp_chi_dis}--\eqref{sym_chi_dis}. The only additional step consists in estimating the new skewed moments, which can be handled using similar arguments.

\begin{proof}
    We obtain estimate \eqref{estim_Jb_dis} thanks to definition \eqref{def_moments_sdiscrets_skewed} and the Cauchy-Schwarz inequality: 
    \begin{align*}
    \|\Jtbf^{\mathrm{s},n}\|_{\Delta,x}^2&=\sum_{k=1,2}\frac{1}{\rho_k^{\infty,*}}\sum_{i\in\II}\Dx\left(\sum_{j\in\J}\Dv\,|v_j|\ft^n_{k,ij}\frac{\sqrt{\chi_{k,j}}}{\sqrt{\chi_{k,j}}}\right)^2\\
    &\leq \sum_{k=1,2}\frac{1}{\rho_k^{\infty,*}}\sum_{i\in\II}\Dx\left(\sum_{j\in\J}\Dv\, v_j^2\chi_{k,j}\right)\left(\sum_{j\in\J}\Dv\frac{\left(\ft^n_{k,ij}\right)^2}{\chi_{k,j}}\right)\\
    &\leq \max(\overline{D_1},\overline{D_2})\,\|\Ftbf^n\|_{\Delta}^2.
    \end{align*}

    Estimate \eqref{estim_Sb_dis} is a consequence of the following relation (due to symmetry): 
    \begin{equation*}
        \sum_{j\in\J}\Dv\, v_j|v_j|\chi_{k,j}=0.
    \end{equation*}
    Indeed, one has:
    \begin{equation}
        \begin{aligned}
            \|\Stbf^{\mathrm{s},n}\|_{\Delta,x}^2  &= \sum_{k=1,2}\sum_{i\in\II} \Dx\frac{1}{\rho^{\infty,*}_k}\left(\sum_{j\in\J}\Dv\, v_j|v_j|\ft^n_{k,ij}\right)^2\\
            &= \sum_{k=1,2}\sum_{i\in\II} \Dx\frac{1}{\rho^{\infty,*}_k}\left(\sum_{j\in\J}\Dv\, v_j|v_j|\left(\ft^n_{k,ij}-\rhot_{k,i}\chi_{k,j}\right)\right)^2\\
            &\leq \max\lbr \overline{Q_1}, \overline{Q_2} \rbr \|(I-\Pi^\Delta)\Ftbf^n\|_\Delta^2.
        \end{aligned}
    \end{equation}
\end{proof}

\section{Discrete nonlinear hypocoercivity}\label{sec:Hypoco}
In this section, we adapt the proof of Section~\ref{sec:ContinuousSetting} to the discrete setting. Since algebraic manipulations directly carry over to the discrete setting, we shall focus on the main challenges induced by the discretization.

The discrete analog of the relative Boltzmann entropy is defined as 
\begin{equation}\label{BoltzEntropy_d}
    H^n\coloneq \sum_{k=1,2}\sum_{i,j}\Dx\Dv\left(f_{k,ij}^n\left(\log\left(\frac{f_{k,ij}^n}{f_{k,j}^{\infty,*}}\right)-1\right)+f_{k,j}^{\infty,*}\right).
\end{equation}
First, we introduce the discrete counterpart of Lemma \ref{lem_microcoercivity}.

\begin{Lemma}[Discrete microscopic coercivity]\label{lem_disc_microcoercivity}
    Under assumptions \eqref{hyp_chi_dis} and \eqref{hyp.CI.1_dis}--\eqref{hyp.CI.2_dis}, let $\Fbf=(\Fbf_{ij})_{ij}$ be solution to the scheme \eqref{scheme_1_nonlin_d1}--\eqref{scheme_2_nonlin_d1}. Then for every $n\geq 0$, 
    \begin{equation}\label{estim_coercivite_micro_dis}
        \frac{H^{n+1} - H^n}{\Delta t} \leq -  \mathcal{D}^{n+1}-\frac{C_{4}^*}{\Delta t}\|\Ftbf^{n+1}-\Ftbf^n\|_{\Delta}^2,
    \end{equation}
    where
    \begin{equation}\label{def.D_d}
        \mathcal{D}^n = \sum_{(i,j,m)\in\II\times\J^2}\Dx\Dv^2 \left(\chi_{1,j}\chi_{2,m} - f_{1,ij}^nf_{2,im}^n\right)\log\left(\frac{f_{1,ij}^nf_{2,im}^n}{\chi_{1,j}\chi_{2,m}}\right),
    \end{equation}
    and 
    \[C_{4}^*=\frac{1}{2}\frac{\rho_m}{\rho_M}\min\left(\frac{\rho_1^{\infty,*}}{\rho_M},\rho_2^{\infty,*}\rho_m\right).\]
\end{Lemma}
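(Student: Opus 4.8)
The strategy mirrors the continuous proof of Lemma~\ref{lem_microcoercivity}, but must account for two genuinely discrete effects: the absence of a chain rule (so that $\log$ cannot simply be pulled through a finite difference) and the extra dissipative term generated by the implicit time stepping. The starting point is the convexity of the map $\xi\mapsto\xi(\log(\xi/f_{k,j}^{\infty,*})-1)+f_{k,j}^{\infty,*}$, whose derivative is $\log(\xi/f_{k,j}^{\infty,*})$. Convexity gives the pointwise inequality
\[
 f_{k,ij}^{n+1}\Big[\log\Big(\tfrac{f_{k,ij}^{n+1}}{f_{k,j}^{\infty,*}}\Big)-1\Big]+f_{k,j}^{\infty,*} - f_{k,ij}^{n}\Big[\log\Big(\tfrac{f_{k,ij}^{n}}{f_{k,j}^{\infty,*}}\Big)-1\Big]-f_{k,j}^{\infty,*}
 \le \big(f_{k,ij}^{n+1}-f_{k,ij}^{n}\big)\log\Big(\tfrac{f_{k,ij}^{n+1}}{f_{k,j}^{\infty,*}}\Big),
\]
and summing against $\Dx\Dv$ bounds $(H^{n+1}-H^n)/\Dt$ from above by $\sum_{k,i,j}\Dx\Dv\,\frac{f_{k,ij}^{n+1}-f_{k,ij}^{n}}{\Dt}\log(f_{k,ij}^{n+1}/f_{k,j}^{\infty,*})$. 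To recover the \emph{missing} negative term $-\tfrac{C_4^*}{\Dt}\|\Ftbf^{n+1}-\Ftbf^n\|_\Delta^2$, I would instead use a second-order Taylor expansion of the same convex function with an explicit remainder: there is an intermediate value $\eta_{k,ij}$ between $f_{k,ij}^n$ and $f_{k,ij}^{n+1}$ such that the left-hand side above equals $(f_{k,ij}^{n+1}-f_{k,ij}^{n})\log(f_{k,ij}^{n+1}/f_{k,j}^{\infty,*}) - \tfrac{1}{2\eta_{k,ij}}(f_{k,ij}^{n+1}-f_{k,ij}^{n})^2$; then the $L^\infty$ bounds \eqref{estimLinf_f1_d}--\eqref{estimLinf_f2_d} (which apply to both $f^n$ and $f^{n+1}$, hence to $\eta$) give $\eta_{1,ij}\le \tfrac{\rho_M}{\rho_1^{\infty,*}}f_{1,j}^{\infty,*}$ and $\eta_{2,ij}\le \tfrac{1}{\rho_m\rho_2^{\infty,*}}f_{2,j}^{\infty,*}$, whence $\tfrac{1}{2\eta_{k,ij}}\ge C_4^*/f_{k,j}^{\infty,*}$ with $C_4^*$ as stated, and the quadratic remainder reassembles into $C_4^*\|\Ftbf^{n+1}-\Ftbf^n\|_\Delta^2$ since $f_{k,ij}^{n+1}-f_{k,ij}^{n}=\ft_{k,ij}^{n+1}-\ft_{k,ij}^{n}$.

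It remains to show that $\sum_{k,i,j}\Dx\Dv\,\frac{f_{k,ij}^{n+1}-f_{k,ij}^{n}}{\Dt}\log(f_{k,ij}^{n+1}/f_{k,j}^{\infty,*}) \le -\mathcal{D}^{n+1}$, which is the exact discrete analogue of the computation in Lemma~\ref{lem_microcoercivity}. Here I substitute the scheme \eqref{scheme_1_nonlin_d1}--\eqref{scheme_2_nonlin_d1} for the time increment and treat the transport and reaction parts separately. For the \emph{transport} part one must verify that $\sum_{i}(\mathcal{F}_{\iph,j}^{n+1}-\mathcal{F}_{\imh,j}^{n+1})\log(f_{1,ij}^{n+1}/f_{1,j}^{\infty,*})$ summed over $i$ is $\le 0$ (ideally $=0$, but for upwind fluxes one only gets a sign); using the upwind form \eqref{flux_F_UW_sd1} and a discrete summation by parts (periodicity in $i$), this reduces to a sum of terms of the shape $\Dv\,|v_j|(f^{n+1}_{1,i+1,j}-f^{n+1}_{1,ij})(\log f^{n+1}_{1,i+1,j}-\log f^{n+1}_{1,ij})\ge 0$ — this is where upwinding is essential and Lax--Friedrichs would fail — so this contribution has the good sign and can be dropped. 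Note also that the $\log f_{k,j}^{\infty,*}$ part of the logarithm telescopes away in $i$. For the \emph{reaction} part one plugs in $\chi_{k,j}-\rho_{2,i}^{n+1}f_{1,ij}^{n+1}$ (resp. $\chi_{k,j}-\rho_{1,i}^{n+1}f_{2,ij}^{n+1}$), doubles the velocity variable by writing $\rho_{2,i}^{n+1}=\sum_m\Dv f_{2,im}^{n+1}$ and $1=\sum_m\Dv\chi_{2,m}$, symmetrizes the two species' contributions by swapping the roles of $j$ and $m$, and uses $\rho_1^{\infty,*}\rho_2^{\infty,*}=1$ to absorb the $\log(\rho_1^{\infty,*}\rho_2^{\infty,*})$ term exactly as in the continuous case; what remains is precisely $-\mathcal{D}^{n+1}$ as defined in \eqref{def.D_d}.

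The main obstacle is the transport term: unlike the continuous case, where an integration by parts makes it vanish identically, the upwind discretization only yields the correct \emph{sign}, and one has to carry out the discrete summation by parts carefully — keeping track of $v_j^+$ versus $v_j^-$ and the periodic index shifts — to see that each interface contributes a nonnegative quantity of the form (increment of $f$)$\times$(increment of $\log f$). A secondary subtlety is bookkeeping: one must confirm that replacing $\Ftbf$ by $\Fbf$ in the logarithm arguments is harmless (the constants $f_{k,j}^{\infty,*}$ inside $\log$ drop out after summation in $i$ for the transport part and get absorbed via $\rho_1^{\infty,*}\rho_2^{\infty,*}=1$ for the reaction part), and that the intermediate values $\eta_{k,ij}$ from the Taylor remainder indeed lie in the interval where the $L^\infty$ bounds apply, which they do since both endpoints $f_{k,ij}^n$ and $f_{k,ij}^{n+1}$ satisfy \eqref{estimLinf_f1_d}--\eqref{estimLinf_f2_d}. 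Everything else is the same algebra as in Lemma~\ref{lem_microcoercivity} and can be stated briefly.
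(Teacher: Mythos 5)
Your overall architecture matches the paper's proof: an exact second-order (Taylor-with-remainder) expansion of the convex entropy integrand produces the cross term $(f^{n+1}_{k,ij}-f^{n}_{k,ij})\log(f^{n+1}_{k,ij}/f^{\infty,*}_{k,j})$ plus a negative quadratic remainder; the $L^\infty$ bounds of Theorem~\ref{thm_bounds_existence} turn that remainder into $-C_4^*\|\Ftbf^{n+1}-\Ftbf^n\|_\Delta^2$ (your intermediate-value bound in fact yields a slightly larger constant than $C_4^*$, which is harmless since the inequality is one-sided); and the reaction part is the same variable-doubling computation as in Lemma~\ref{lem_microcoercivity}, with $\rho_1^{\infty,*}\rho_2^{\infty,*}=1$ absorbing the constant, exactly as in the paper.

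The one step that would fail as written is the transport term. After the discrete summation by parts, the relevant quantity is $\sum_{i,j}\mathcal{F}^{n+1}_{\iph,j}\bigl(\log f^{n+1}_{1,i+1,j}-\log f^{n+1}_{1,ij}\bigr)$, and it does \emph{not} reduce to a sum of nonnegative interface terms of the form $\Dv\,|v_j|\,(f^{n+1}_{1,i+1,j}-f^{n+1}_{1,ij})(\log f^{n+1}_{1,i+1,j}-\log f^{n+1}_{1,ij})$. If you split the upwind flux into centered plus viscosity as in \eqref{flux_1_UW_d}, the viscous half indeed gives such signed terms, but the centered half $\sum_i v_j\frac{f_{1,ij}+f_{1,i+1,j}}{2}(\log f_{1,i+1,j}-\log f_{1,ij})$ does not vanish and has no definite sign: the discrete analogue of $\int f\,\partial_x(\log f)\dd x=\int \partial_x f\dd x=0$ is false for this pairing (try $f=(1,2,4)$ on a periodic three-cell mesh). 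Likewise, in the genuine upwind form, the individual term $v_j f^{n+1}_{1,ij}(\log f^{n+1}_{1,i+1,j}-\log f^{n+1}_{1,ij})$ for $v_j>0$ is not signed. The mechanism that actually works, and that the paper uses, is concavity of the logarithm combined with telescoping: for $v_j>0$ one bounds $f_{1,ij}(\log f_{1,i+1,j}-\log f_{1,ij})\le f_{1,i+1,j}-f_{1,ij}$, and for $v_j<0$ one bounds $f_{1,i+1,j}(\log f_{1,i+1,j}-\log f_{1,ij})\ge f_{1,i+1,j}-f_{1,ij}$; the resulting majorant $\sum_i \Dv\, v_j(f_{1,i+1,j}-f_{1,ij})$ then vanishes by periodicity. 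The sign is thus obtained only for the sum over $i$, not interface by interface. (A minor related slip: what must be shown is $\sum_{i}(\mathcal{F}^{n+1}_{\iph,j}-\mathcal{F}^{n+1}_{\imh,j})\log(f^{n+1}_{1,ij}/f^{\infty,*}_{1,j})\ge 0$, the minus sign coming from the scheme then making the transport contribution to $H^{n+1}-H^n$ nonpositive; your displayed inequality has the sign reversed.) With the concavity argument in place of the claimed pointwise reduction, the rest of your proposal goes through and coincides with the paper's proof.
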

For the purely algebraic manipulations carried out in the continuous setting, the transposition to the discrete level is straightforward. The main difference arises from the absence of a discrete chain rule, which forces us to rely instead on the concavity of the logarithm function. More precisely, the remainder term in the first-order Taylor expansion of $s \mapsto \log(s)$ has a definite sign, which we exploit to obtain the desired estimate. This yields the second term in \eqref{estim_coercivite_micro_dis}, corresponding to an additional numerical dissipation. Furthermore, this extra dissipation originates purely from the time discretization and is of order $\Dt$; hence, it vanishes in the continuous-time limit $\Dt \to 0$.
\begin{proof}
    By definition, we have
    \begin{equation*}
        H^{n+1}-H^n=\sum_{k=1,2}\sum_{i,j}\Dx \Dv \left[f_{k,ij}^{n+1}\left(\log\left(\frac{f_{k,ij}^{n+1}}{f_{k,j}^\infty}\right)-1\right)-f_{k,ij}^{n}\left(\log\left(\frac{f_{k,ij}^{n}}{f_{k,j}^\infty}\right)-1\right)\right].
    \end{equation*}
    For $k=1,2$, for all $i\in\II$, $j\in\J$, we can write
    \begin{gather*}
        f_{k,ij}^{n+1}\left(\log\left(\frac{f_{k,ij}^{n+1}}{f_{k,j}^\infty}\right)-1\right)-f_{k,ij}^{n}\left(\log\left(\frac{f_{k,ij}^{n}}{f_{k,j}^\infty}\right)-1\right) = \\
        (f_{k,ij}^{n+1}-f_{k,ij}^n)\log\left(\frac{f_{k,ij}^{n+1}}{f_{k,j}^\infty}\right)+f_{k,ij}^n\left[\log\left(\frac{f_{k,ij}^{n+1}}{f_{k,j}^\infty}\right)-\log\left(\frac{f_{k,ij}^n}{f_{k,j}^\infty}\right)\right]+f_{k,ij}^{n+1}-f_{k,ij}^n.
    \end{gather*}
    Using a Taylor expansion, the second term of the right hand side can be written as
    \[ 
        f_{k,ij}^n\left(\log(f_{k,ij}^{n+1})-\log(f_{k,ij}^n)\right) = f_{k,ij}^n\left[\frac{1}{f_{k,ij}^n}(f_{k,ij}^{n+1}-f_{k,ij}^n)-\frac{1}{2(\widehat{f_{k,ij}})^2}(f_{k,ij}^{n+1}-f_{k,ij}^n)^2\right],
    \]
    with $\widehat{f_{k,ij}}\in\left (\min(f_{k,ij}^n,f_{k,ij}^{n+1}),\max(f_{k,ij}^n,f_{k,ij}^{n+1})\right)$.
    We thus have
    \begin{gather}
        H^{n+1}-H^n=\sum_{k=1,2}\sum_{i,j}\Dx \Dv\, (f_{k,ij}^{n+1}-f_{k,ij}^n)\log\left(\frac{f_{k,ij}^{n+1}}{f_{k,j}^\infty}\right) \nonumber\\
        -\frac{1}{2}\sum_{k=1,2}\sum_{i,j}\Dx \Dv \,\frac{f_{k,ij}^n}{(\widehat{f_{k,ij}})^2}\,(f_{k,ij}^{n+1}-f_{k,ij}^n)^2.\label{micro_d1}
    \end{gather}
    The first term is analog to its continuous counterpart, and is rewritten using the numerical scheme \eqref{scheme_1_nonlin_d1}--\eqref{scheme_2_nonlin_d1}, yielding
    \begin{align}
        &\sum_{k=1,2}\sum_{i,j}\Dx\Dv(f_{k,ij}^{n+1}-f_{k,ij}^n)\log\left(\frac{f^{n+1}_{k,ij}}{f_{k,j}^{\infty,*}}\right)\label{micro_d2}\\
        &=\Dt\sum_{i,j}\left(-(\mathcal{F}^{n+1}_{\iph}-\mathcal{F}^{n+1}_{\imh})+\Dx\Dv(\chi_{1,j}-\rho^{n+1}_{2,i}f^{n+1}_{1,ij})\right)\log\left(\frac{f^{n+1}_{1,ij}}{f_{1,j}^{\infty,*}}\right)\nonumber\\
        &+ \Dt\sum_{i,j}\left(-(\mathcal{G}^{n+1}_{\iph}-\mathcal{G}^{n+1}_{\imh})+\Dx\Dv(\chi_{2,j}-\rho_{1,i}f^{n+1}_{2,ij})\right)\log\left(\frac{f^{n+1}_{2,ij}}{f_{2,j}^{\infty,*}}\right).\nonumber
    \end{align}
    The transport terms are handled in the same way for both species. We therefore focus on the first one. Using a discrete integration by parts on $i$, one has 
    \begin{equation*}
    \begin{aligned}
     A_\TT &\coloneq  -\sum_{i,j}(\mathcal{F}^{n+1}_{\iph,j}-\mathcal{F}^{n+1}_{\imh,j})\log\left(\frac{f^{n+1}_{1,ij}}{f_{1,j}^{\infty,*}}\right) \\
     &= \sum_{i,j} \mathcal{F}^{n+1}_{\iph,j}\left(\log\left(f^{n+1}_{1,i+1,j}\right)-\log\left(f^{n+1}_{1,ij}\right)\right)\\
        &= \sum_{i\in\II, j>0}\Dv\, v_j f_{1,ij}\left(\log\left(f^{n+1}_{1,i+1,j}\right)-\log\left(f^{n+1}_{1,ij}\right)\right)\\
        &\quad+ \sum_{i\in\II, j<0}\Dv\, v_j f^{n+1}_{1,i+1,j} \left(\log\left(f^{n+1}_{1,i+1,j}\right)-\log\left(f_{1,ij}\right)\right).
    \end{aligned}
    \end{equation*}
    Then, by concavity of $s\mapsto\log(s)$, we have
    \begin{equation*}
        \begin{aligned}
            A_\TT       &\leq \sum_{i\in\II, j>0}\Dv\, v_j f_{1,ij}^{n+1}\frac{f_{1,i+1,j}^{n+1}-f_{1,ij}^{n+1}}{f_{1,ij}^{n+1}}
            + \sum_{i\in\II, j<0}\Dv\, v_j f_{1,i+1,j}^{n+1} \frac{f_{1,i+1,j}^{n+1}-f_{1,ij}^{n+1}}{f_{1,i+1,j}^{n+1}}\\
            &\leq 0,
        \end{aligned}
    \end{equation*}
    where we have used the periodic boundary conditions in the physical space to conclude. The reaction terms in \eqref{micro_d2} are then handled in the same way as in the continuous setting, since their treatment involves only algebraic manipulations. One then obtains
    \begin{equation*}
    \sum_{k=1,2}\sum_{i,j}\Dx \Dv\, (f_{k,ij}^{n+1}-f_{k,ij}^n)\log\left(\frac{f_{k,ij}^{n+1}}{f_{k,j}^\infty}\right) \leq -\Delta t \mathcal{D}^{n+1}.
    \end{equation*}
    For the second term of the right-hand side of \eqref{micro_d1}, we use the uniform-in-time $L^\infty$ bounds of $f_{k,ij}^n$. These bounds immediately transfer to $\widehat{f_{k,ij}}$, from which we obtain
    \[\frac{f_{1,ij}^n}{(f_{1,ij}^*)^2}\geq \frac{\rho_m\chi_1}{(\rho_M\chi_1)^2}=\frac{\rho_m}{\rho_M^2}\frac{1}{\chi_1}, \quad \frac{f_{2,ij}^n}{(f_{2,ij}^*)^2}\geq \frac{\chi_2\rho_m^2}{\rho_M\chi_2^2}=\frac{\rho_m^2}{\rho_M}\frac{1}{\chi_2},\]
    and finally the expected result \eqref{estim_coercivite_micro_dis}.
\end{proof}

By directly adapting the arguments used in the continuous setting to the discrete framework, we obtain the following discrete analogue of Lemma \ref{lem_estimD}. It relates the discrete dissipation to the nonlinear reaction terms appearing in the discrete moment equation \eqref{scheme_rho_UW_d}, as well as to the dissipation in the velocity variable.
\begin{Lemma}\label{lem_estimD_d}
Under assumptions \eqref{hyp_chi_dis} and \eqref{hyp.CI.1_dis}--\eqref{hyp.CI.2_dis}, the dissipation term defined by \eqref{def.D_d} satisfies for all $n\geq 0$
\begin{equation}\label{estim.D_d}
    \mathcal{D}^n\geq C_5^*\|1-\rho_1^n\rho_2^n\|_{L^2(\T)}^2+C_6^*\|(I-\Pi^\Delta)\Ftbf^n\|^2_\Delta,
\end{equation}
where $C_5^*=\rho_m/\rho_M$ and $C_6^*=(\rho_m/\rho_M)(\rho_1^{\infty,*}+\rho_2^{\infty,*})^{-1}\min(\rho_m,\rho_M^{-1})^2$.
\end{Lemma}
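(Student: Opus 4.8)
The plan is to mirror the continuous proof of Lemma~\ref{lem_estimD} line by line, since the authors have been careful to arrange that the only structural ingredient used there---the Taylor expansion of the logarithm, the doubling of velocity variables, the Cauchy--Schwarz inequality, and the expansion of a square whose cross term vanishes by the mean-zero property of $f_k-\rho_k\chi_k$---all have exact discrete counterparts. First I would apply the first-order Taylor expansion to $s\mapsto\log(s)$ to write $\log(f_{1,ij}^n f_{2,im}^n/(\chi_{1,j}\chi_{2,m})) = (f_{1,ij}^n f_{2,im}^n - \chi_{1,j}\chi_{2,m})/\widehat{fg}$ with $\widehat{fg}$ between the two arguments; the uniform $L^\infty$ bounds \eqref{estimLinf_f1_d}--\eqref{estimLinf_f2_d} give $\widehat{fg}\le (\rho_M/\rho_m)\chi_{1,j}\chi_{2,m}$, so that
\[
\mathcal{D}^n \ge \frac{\rho_m}{\rho_M}\sum_{(i,j,m)}\Dx\Dv^2\,\frac{(f_{1,ij}^n f_{2,im}^n - \chi_{1,j}\chi_{2,m})^2}{\chi_{1,j}\chi_{2,m}} \eqcolon \frac{\rho_m}{\rho_M}\,\mathcal{E}^n,
\]
the discrete analogue of \eqref{estim.Da}.

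Next I would estimate $\mathcal{E}^n$ from below. Using $\rho_1^{\infty,*}+\rho_2^{\infty,*} = 1/\rho_1^{\infty,*}+1/\rho_2^{\infty,*}$ and $\sum_m\Dv\,\chi_{k,m}=1$, I would write $(\rho_1^{\infty,*}+\rho_2^{\infty,*})\mathcal{E}^n$ as a sum over the two ordered pairs $(k,l)$ with $k\ne l$ of $\sum_i\sum_j \frac{\Dx\Dv}{\rho_k^{\infty,*}\chi_{k,j}}\big[\sum_m \Dv\,\frac{(f_{k,ij}f_{l,im}-\chi_{k,j}\chi_{l,m})}{\sqrt{\chi_{l,m}}}\sqrt{\chi_{l,m}}\big]^2$, then apply the discrete Cauchy--Schwarz inequality in the $m$-sum to pull out $\sum_m\Dv\,\chi_{l,m}=1$, giving
\[
(\rho_1^{\infty,*}+\rho_2^{\infty,*})\mathcal{E}^n \ge \sum_{\substack{k,l=1,2\\k\ne l}}\sum_i\sum_j \frac{\Dx\Dv}{\rho_k^{\infty,*}\chi_{k,j}}\Big(\textstyle\sum_m\Dv(f_{k,ij}f_{l,im}-\chi_{k,j}\chi_{l,m})\Big)^2 = \sum_{\substack{k,l=1,2\\k\ne l}}\sum_i\sum_j \frac{\Dx\Dv}{\rho_k^{\infty,*}\chi_{k,j}}(\rho_{l,i}f_{k,ij}-\chi_{k,j})^2.
\]
Then I would split $\rho_{l,i}f_{k,ij}-\chi_{k,j} = \rho_{l,i}(f_{k,ij}-\rho_{k,i}\chi_{k,j}) - \chi_{k,j}(1-\rho_{k,i}\rho_{l,i})$ and expand the square; the cross term sums to zero in $j$ because $\sum_j\Dv\,(f_{k,ij}-\rho_{k,i}\chi_{k,j}) = \rho_{k,i}-\rho_{k,i} = 0$ by definition \eqref{def_density_dis} and $\sum_j\Dv\,\chi_{k,j}=1$. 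Using the $L^\infty$ bounds to lower-bound $\rho_{l,i}^2$ by $\min(\rho_m,\rho_M^{-1})^2$ and recognizing $\sum_j\Dv\,(f_{k,ij}-\rho_{k,i}\chi_{k,j})^2/(\rho_k^{\infty,*}\chi_{k,j})$ summed over $i$ as $\|(I-\Pi^\Delta)\Ftbf^n\|_\Delta^2$ (with $\Fbf^{\infty,*}$ in the nullspace of $I-\Pi^\Delta$), together with $(1/\rho_1^{\infty,*}+1/\rho_2^{\infty,*})\|1-\rho_1^n\rho_2^n\|_{L^2(\T)}^2$ for the other piece, yields the claim after combining with the bound on $\mathcal{E}^n$ above.

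I do not expect any genuine obstacle here: the proof is purely algebraic and every manipulation used in the continuous argument transfers verbatim once $\int_\R(\cdot)\dd v$ is replaced by $\sum_j\Dv(\cdot)$, $\int_\R(\cdot)\dd x$ by $\sum_i\Dx(\cdot)$, and $\chi_k$, $\rho_k^\infty$, $f_k^\infty$, $\Pi$ by their starred/discrete versions. The only point requiring a word of care is that the discrete Cauchy--Schwarz step is applied to finite sums (trivially valid) and that the vanishing of the cross term relies on the \emph{exact} discrete mass identity $\rho_{k,i}^n = \sum_j\Dv\,f_{k,ij}^n$ rather than an approximation---which holds by construction. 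Hence the write-up can legitimately be compressed to ``the same algebraic manipulations as in the proof of Lemma~\ref{lem_estimD}, using \eqref{hyp_chi_dis}--\eqref{sym_chi_dis} and the discrete mass identity'' followed by the statement of the two resulting constants $C_5^* = \rho_m/\rho_M$ and $C_6^* = (\rho_m/\rho_M)(\rho_1^{\infty,*}+\rho_2^{\infty,*})^{-1}\min(\rho_m,\rho_M^{-1})^2$, exactly paralleling $C_3$ and $C_4$.
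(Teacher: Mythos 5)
Your proposal is correct and is exactly the argument the paper intends: the paper omits the proof of this lemma, stating only that it follows ``by directly adapting the arguments used in the continuous setting,'' and your line-by-line transposition (Taylor expansion of the logarithm with the $L^\infty$ bounds, doubling of the velocity index, discrete Cauchy--Schwarz, and the vanishing cross term via the exact identity $\sum_j\Dv(f_{k,ij}-\rho_{k,i}\chi_{k,j})=0$) is precisely that adaptation, yielding the stated $C_5^*$ and $C_6^*$. The only point worth flagging is that you implicitly read $\mathcal{D}^n$ with the sign convention of the continuous definition \eqref{def.D}, i.e.\ as $\sum(f_{1,ij}f_{2,im}-\chi_{1,j}\chi_{2,m})\log(\cdot)$, which is the nonnegative quantity the lemma requires; the sign in \eqref{def.D_d} as printed appears to be a typo.
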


We are now ready to state the full discrete hypocoercivity result.

\begin{Theorem}\label{thm.hypoco_d}
    Under assumptions \eqref{hyp_chi_dis} and \eqref{hyp.CI.1_dis}--\eqref{hyp.CI.2_dis}, and assuming that the number of points $N$ of the spatial discretization is odd, there exist constants $C\geq 1$ and $\kappa>0$, which do not depend on the discretization parameters, such that 
    \begin{equation}
    \|\Fbf^n-\Fbf^{\infty,*}\|_\Delta\leq C\,\|\Fbf^\tin-\Fbf^{\infty,*}\|_\Delta e^{-\kappa t^n} \quad \forall n\geq 0.
    \end{equation}
\end{Theorem}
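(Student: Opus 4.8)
The plan is to mimic the continuous proof of Theorem~\ref{thm.hypoco} with the modified entropy functional adapted to the discrete setting. Define
\[
\Gamma^n\coloneq H^n+\delta\,\lla\Jtbf^n,D_x^c\Phibf^n\rra_{\Delta,x},
\]
where $\Phibf^n=(\phi_1^n,\phi_2^n)^\top$ solves the discrete Poisson problem $-(D_x^+D_x^-\phi_k^n)_i=\rhot^{\,n}_{k,i}$ with zero mean, and $\delta>0$ is a small parameter. The first step is to prove a discrete analogue of the "equivalent norm" Lemma~\ref{lem.normeq}: using the same Taylor expansion of $\xi\mapsto\xi(\log(\xi/f^{\infty,*}_{k,j})-1)+f^{\infty,*}_{k,j}$ together with the uniform $L^\infty$ bounds \eqref{estimLinf_f1_d}--\eqref{estimLinf_f2_d} gives $c_H\|\Ftbf^n\|_\Delta^2\le H^n\le C_H\|\Ftbf^n\|_\Delta^2$, while Cauchy--Schwarz, the discrete moment estimate \eqref{estim_J2_dis}, the discrete Poincaré inequality (Lemma~\ref{lem_Poincare_dis}) and \eqref{estim_rhot_dis} bound the cross term by $C_1^*C_P\|\Ftbf^n\|_\Delta^2$; hence for $\delta$ small, $c_\delta\|\Ftbf^n\|_\Delta^2\le\Gamma^n\le C_\delta\|\Ftbf^n\|_\Delta^2$. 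I would also record the discrete counterparts of \eqref{estim.dxphi}--\eqref{estim.dtdxphi}, where the estimate on $\|D_x^c(\Phibf^{n+1}-\Phibf^n)/\Dt\|_{\Delta,x}$ is obtained by testing the time-differenced Poisson equation with $(\Phibf^{n+1}-\Phibf^n)/\Dt$ and using the discrete continuity equation \eqref{scheme_rho_UW_d}; this is where the new skewed moment $\Jtbf^{\mathrm s}$ enters and is controlled via \eqref{estim_Jb_dis}, producing an extra term of size $\Dx\|\Ftbf^{n+1}\|_\Delta$.

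Next I would compute the discrete time increment of $\Gamma^n$. Write
\[
\frac{\Gamma^{n+1}-\Gamma^n}{\Dt}=\frac{H^{n+1}-H^n}{\Dt}+\delta\,\frac{\lla\Jtbf^{n+1},D_x^c\Phibf^{n+1}\rra_{\Delta,x}-\lla\Jtbf^n,D_x^c\Phibf^n\rra_{\Delta,x}}{\Dt}.
\]
For the entropy part, apply discrete microscopic coercivity (Lemma~\ref{lem_disc_microcoercivity}): the increment is bounded by $-\mathcal D^{n+1}-(C_4^*/\Dt)\|\Ftbf^{n+1}-\Ftbf^n\|_\Delta^2$, and $-\mathcal D^{n+1}$ is further estimated by Lemma~\ref{lem_estimD_d}, yielding negative contributions in $\|1-\rho_1^{n+1}\rho_2^{n+1}\|_{L^2(\T)}^2$ and $\|(I-\Pi^\Delta)\Ftbf^{n+1}\|_\Delta^2$. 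For the modified term, use the discrete product rule $a^{n+1}b^{n+1}-a^nb^n=(a^{n+1}-a^n)b^{n+1}+a^n(b^{n+1}-b^n)$ to split it into a part involving the increment of $\Jtbf$ — rewritten via the discrete moment scheme \eqref{scheme_J1_UW_d}--\eqref{scheme_J2_UW_d} into discrete analogues of $T_1$, $T_2$, $T_3$, $T_4$ (including a new term $T_5$ coming from the skewed viscous moment $\Stbf^{\mathrm s}$) — and a part involving the increment of $D_x^c\Phibf$, which is exactly $T_4$-type. Each $T_\ell$ is then estimated as in the continuous proof: $T_1$ by \eqref{estim_rhot_dis} and \eqref{estim_S_dis}; $T_2$ provides the macroscopic coercivity $-\min(\underline{D_1},\underline{D_2})\|\Pi^\Delta\Ftbf^{n+1}\|_\Delta^2$ after a discrete integration by parts using \eqref{IPP_decentre} and the discrete Poisson equation; $T_3$ by the $L^\infty$ bounds on $\rho_k^{n+1}$ and \eqref{estim_J2_dis}; $T_4$ by Cauchy--Schwarz, \eqref{estim_J2_dis} and the discrete version of \eqref{estim.dtdxphi}, then Young's inequality; and the new $T_5$ by \eqref{estim_Sb_dis} together with \eqref{estim_dxu_disc}, which produces a factor $\Dx$ and hence is a higher-order perturbation.

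Recombining everything gives, for $\delta$ small enough and independently of $\Delta$,
\[
\frac{\Gamma^{n+1}-\Gamma^n}{\Dt}+K_\delta\|\Ftbf^{n+1}\|_\Delta^2+\overline K_\delta\|1-\rho_1^{n+1}\rho_2^{n+1}\|_{L^2(\T)}^2\le 0,
\]
where the leftover negative terms $-(C_4^*/\Dt)\|\Ftbf^{n+1}-\Ftbf^n\|_\Delta^2$ and the $O(\Dx)$ terms are simply discarded (they have the right sign or are absorbed). Combined with the discrete equivalence $\Gamma^{n+1}\asymp\|\Ftbf^{n+1}\|_\Delta^2$, this yields $\Gamma^{n+1}\le(1+2\tilde\kappa\Dt)^{-1}\Gamma^n$ for some $\tilde\kappa>0$, hence $\Gamma^n\le(1+2\tilde\kappa\Dt)^{-n}\Gamma^0\le e^{-\kappa t^n}\Gamma^0$ using $\log(1+x)\ge x/(1+x)$ or, more simply, $(1+a\Dt)^{-n}\le e^{-\kappa n\Dt}$ for a suitable $\kappa$; translating back through $c_\delta\|\Ftbf^n\|_\Delta^2\le\Gamma^n$ and $\Gamma^0\le C_\delta\|\Ftbf^{\tin}\|_\Delta^2$ gives the claimed estimate with $C=\sqrt{C_\delta/c_\delta}$.

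The main obstacle I anticipate is twofold. First, controlling the extra time-discrete dissipation term $-(C_4^*/\Dt)\|\Ftbf^{n+1}-\Ftbf^n\|_\Delta^2$: naively it is useless (just dropped), but one must be careful that no positive $O(1/\Dt)$ term is generated elsewhere when manipulating the modified term — in particular the discrete product-rule splitting must not introduce an uncontrolled $\|\Ftbf^{n+1}-\Ftbf^n\|_\Delta^2/\Dt$ term with the wrong sign, or if it does, it must be absorbed by this dissipation (this is precisely the "stabilizing role" advertised in the abstract). Second, ensuring that all the $O(\Dx)$ contributions arising from the skewed viscous moments $\Jtbf^{\mathrm s},\Stbf^{\mathrm s}$ in the discrete moment equations \eqref{scheme_rho_UW_d}--\eqref{scheme_J2_UW_d} are genuinely lower order and do not spoil the sign of the quadratic form in $(\|\Pi^\Delta\Ftbf\|_\Delta,\|(I-\Pi^\Delta)\Ftbf\|_\Delta,\|1-\rho_1\rho_2\|_{L^2(\T)})$; a clean way is to first fix $\delta$ so that the $\Dx$-independent quadratic form is negative definite with some margin, then choose $\Dx$ (or rather note the $\Dx$-terms come with $\Dx\le\mathrm{diam}(\T)$ fixed, so really use $\delta$ small) small enough that the perturbation stays within that margin — but since the theorem claims $\kappa,C$ independent of $\Delta$, one must instead verify the skewed-moment terms carry an intrinsic $\Dx$ factor that is bounded, and absorb them using \eqref{estim_dxu_disc}, which trades a $D_x^c$ for a $1/\Dx$ and thus cancels the $\Dx$ cleanly. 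Getting this bookkeeping right, so that the final constants depend only on the data through $\rho_m,\rho_M,\rho^{\infty,*},\underline{D_k},\overline{D_k},\overline{Q_k},C_P$, is the delicate part.
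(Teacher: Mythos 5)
Your overall architecture coincides with the paper's: discrete microscopic coercivity (Lemma~\ref{lem_disc_microcoercivity}), the dissipation lower bound of Lemma~\ref{lem_estimD_d}, the discrete moment estimates, macroscopic coercivity extracted from the discrete Poisson equation, and absorption for $\delta$ small. The genuine gap lies in the time increment of the cross term, precisely the point you flag as your ``main obstacle'' without resolving it. The paper's Lyapunov functional is not the one you wrote: it carries a third term, $\frac{\delta}{2\Dt}\sum_{i}\Dx\left((D_x^c\Phibf^n)_i-(D_x^c\Phibf^{n-1})_i\right)^2$ (see \eqref{eq_GammaDelta}), and this term is what makes the argument close uniformly in $\Dt$. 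In the paper's splitting, the product-rule residual $R_1^\Delta=\lla\Jtbf^n-\Jtbf^{n+1},D_x^c\Phibf^{n+1}-D_x^c\Phibf^{n}\rra_{\Delta,x}$ is integrated by parts and rewritten through the continuity scheme \eqref{scheme_rho_UW_d}; this produces an $O(1/\Dt)$ term involving the second difference $\rhotbf^{\,n+1}-2\rhotbf^{\,n}+\rhotbf^{\,n-1}$, which cannot be absorbed by any available dissipation and is instead cancelled by telescoping against the extra term of $\Gamma^n$ via $-a(a-b)\le -(a^2-b^2)/2$. The remaining residuals are bounded by $O(1)\cdot\|\Ftbf^{n+1}-\Ftbf^n\|_\Delta^2$ with \emph{no} factor $\Dt$ and are absorbed by the time-discrete dissipation $-C_4^*\|\Ftbf^{n+1}-\Ftbf^n\|_\Delta^2$ of Lemma~\ref{lem_disc_microcoercivity}; that term is therefore genuinely used, not ``simply discarded''.

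Your alternative split $a^n(b^{n+1}-b^n)$ avoids the second time difference but does not close with $\Delta$-independent constants. Estimating $\lla\Jtbf^n,D_x^c(\Phibf^{n+1}-\Phibf^n)\rra_{\Delta,x}$ by Cauchy--Schwarz and \eqref{estim_dtxphi_dis} leaves $\|\Jtbf^n\|_{\Delta,x}$ at level $n$ while all coercivity sits at level $n+1$; converting via $\|(I-\Pi^\Delta)\Ftbf^n\|_\Delta\le\|(I-\Pi^\Delta)\Ftbf^{n+1}\|_\Delta+\|\Ftbf^{n+1}-\Ftbf^n\|_\Delta$ generates a term of size $\delta\,\Dt\,\|\Ftbf^{n+1}-\Ftbf^n\|_\Delta\,\|\Ftbf^{n+1}\|_\Delta$; after Young's inequality, absorbing its increment part into $C_4^*\|\Ftbf^{n+1}-\Ftbf^n\|_\Delta^2$ leaves a remainder of order $\delta^2\Dt^2\|\Ftbf^{n+1}\|_\Delta^2$, which is dominated by the coercive term $-\delta\,\Dt\,\alpha_1\|\Ftbf^{n+1}\|_\Delta^2$ only if $\delta\Dt$ is small---a CFL-type restriction incompatible with the claimed uniformity of $C$ and $\kappa$. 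You need either the augmented functional or an equivalent device. Two minor points: the paper's auxiliary Poisson scheme is $-D_x^cD_x^c\phi_k=\rhot_k$, not $-D_x^+D_x^-\phi_k=\rhot_k$, which is what matches the centered operators in the moment schemes and the Poincar\'e inequality requiring $N$ odd; and your handling of the skewed viscous moments is correct, since \eqref{estim_dxu_disc} trades a $D_x^c$ for a factor $\Dx^{-1}$ and the $\Dx$ factors cancel exactly as in the paper's estimate of $R_2^\Delta$.
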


In order to prove this result, we introduce the discrete modified entropy functional
\begin{equation}\label{eq_GammaDelta}
    \Gamma^n\coloneq  H^n+\delta\lla \Jtbf^n,D_x^c\Phibf^n\rra_{\Delta,x}+\frac{\delta}{2\Delta t}\sum_{i\in\II}\Dx \left((D_x^c\Phibf^n)_i-(D_x^c\Phibf^{n-1})_i\right)^2,
\end{equation}
where $\Phibf^n=\left(\phi^n_1, \phi^n_2\right)^\top$, $(\phi_{k,i})_{i\in\II}$ being solution to the discrete Poisson equation
\begin{equation}\label{Poisson_d}
    -D_x^cD_x^c\phi^n_{k,i}=\rhot_{k,i},\quad\sum_{i\in\II}\Dx\phi^n_{k,i}=0, \quad k=1,2,
\end{equation}
and $\delta>0$ is a small parameter to be chosen later. For an odd number of points $N$, existence and uniqueness of $\Phibf^n$ satisfying \eqref{Poisson_d} is guaranteed (see \cite{BessemoulinHerdaRey2020}). The third term in \eqref{eq_GammaDelta} arises from the choice of a fully implicit time discretization and is consistent with zero in the limit $\Dt \to 0$. This term, similarly to the additional numerical dissipation identified in Lemma~\ref{lem_disc_microcoercivity}, serves to compensate for the lack of a discrete chain rule, which necessitates to control the corresponding remainder terms.\\

Analogously to the continuous setting, it is straightforward to show that the discrete modified entropy $\Gamma^n$ defines a norm equivalent to $\|\Ftbf^n\|^2_\Delta$, as stated in the following lemma from \cite{BessemoulinHerdaRey2020}.

\begin{Lemma}[Equivalent norm]\label{lem.normeq.d}
 There is $\delta^*>0$ such that for all $\delta\in (0,\delta^*)$, there are positive constants $0<c_\delta<C_\delta$ such that if $\Fbf^n$ satisfies \eqref{estimLinf_f1}--\eqref{estimLinf_f2}, one has
 \[c_\delta\|\Ftbf^n\|^2\leq \Gamma^n\leq C_\delta\|\Ftbf^n\|^2.\] 
\end{Lemma}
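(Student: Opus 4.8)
The plan is to split the functional into $\Gamma^n = H^n + \delta\,T_{\mathrm c}^n + T_{\mathrm i}^n$, where $T_{\mathrm c}^n\coloneq\lla \Jtbf^n, D_x^c\Phibf^n\rra_{\Delta,x}$ is the cross term and $T_{\mathrm i}^n\coloneq\frac{\delta}{2\Dt}\sum_{i\in\II}\Dx\bigl((D_x^c\Phibf^n)_i-(D_x^c\Phibf^{n-1})_i\bigr)^2\ge 0$ is the extra implicit-in-time term, and to control each piece by $\|\Ftbf^n\|_\Delta^2$. The entropic part $H^n$ and the cross term $T_{\mathrm c}^n$ are handled exactly as in the proof of Lemma~\ref{lem.normeq}, with discrete sums replacing integrals; the only genuinely new object is $T_{\mathrm i}^n$, which, being a sum of squares, only needs an upper bound.

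For the entropic part I would apply cell by cell the second-order Taylor expansion of $\xi\mapsto \xi\bigl(\log(\xi/f_{k,j}^{\infty,*})-1\bigr)+f_{k,j}^{\infty,*}$, which gives, for each $(i,j)$, $f_{k,ij}^n\bigl(\log(f_{k,ij}^n/f_{k,j}^{\infty,*})-1\bigr)+f_{k,j}^{\infty,*}=\frac{1}{2\xi_{k,ij}}(f_{k,ij}^n-f_{k,j}^{\infty,*})^2$ with $\xi_{k,ij}$ between $f_{k,ij}^n$ and $f_{k,j}^{\infty,*}$. Since $f_{k,j}^{\infty,*}=\rho_k^{\infty,*}\chi_{k,j}$ and, exactly as in the continuous case, the discrete mass constraint \eqref{def_rhoinf_sd} together with $\sum_j\Dv\chi_{k,j}=1$ forces $\rho_m\le\rhoinfs\le\rho_M$, the discrete maximum principle \eqref{estimLinf_f1_d}--\eqref{estimLinf_f2_d} sandwiches $\xi_{1,ij}$ between $\tfrac{\rho_m}{\rho_1^{\infty,*}}f_{1,j}^{\infty,*}$ and $\tfrac{\rho_M}{\rho_1^{\infty,*}}f_{1,j}^{\infty,*}$, and $\xi_{2,ij}$ between $\tfrac{1}{\rho_M\rho_2^{\infty,*}}f_{2,j}^{\infty,*}$ and $\tfrac{1}{\rho_m\rho_2^{\infty,*}}f_{2,j}^{\infty,*}$. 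Summing against $\Dx\Dv$ then yields $c_H^*\|\Ftbf^n\|_\Delta^2\le H^n\le C_H^*\|\Ftbf^n\|_\Delta^2$ with $C_H^*=\tfrac12\max(\rho_1^{\infty,*}/\rho_m,\rho_M\rho_2^{\infty,*})$ and $c_H^*=\tfrac12\min(\rho_1^{\infty,*}/\rho_M,\rho_m\rho_2^{\infty,*})$, i.e.\ the constants of Lemma~\ref{lem.normeq} with $\rhoinf$ replaced by $\rhoinfs$.

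For the cross term I would use Cauchy--Schwarz, $|T_{\mathrm c}^n|\le\|\Jtbf^n\|_{\Delta,x}\,\|D_x^c\Phibf^n\|_{\Delta,x}$, bound the first factor by the discrete moment estimate \eqref{estim_J2_dis}, and bound the second by the discrete analogue of \eqref{estim.dxphi}: testing \eqref{Poisson_d} against $\Phibf^n$ and using \eqref{IPP_centre} twice gives $\|D_x^c\Phibf^n\|_{\Delta,x}^2=\lla\Phibf^n,\rhotbf^n\rra_{\Delta,x}$, then Lemma~\ref{lem_Poincare_dis} (applicable since $N$ is odd and $\sum_i\Dx\phi_{k,i}^n=0$) and \eqref{estim_rhot_dis} yield $\|D_x^c\Phibf^n\|_{\Delta,x}\le C_P\|\Pi^\Delta\Ftbf^n\|_\Delta$. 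Since $\Pi^\Delta$ is the $\lla\cdot,\cdot\rra_\Delta$-orthogonal projection onto $\mathrm{Ran}\,\Pi^\Delta$ (because $\sum_j\Dv\chi_{k,j}=1$), both $\|(I-\Pi^\Delta)\Ftbf^n\|_\Delta$ and $\|\Pi^\Delta\Ftbf^n\|_\Delta$ are $\le\|\Ftbf^n\|_\Delta$, hence $|T_{\mathrm c}^n|\le C_1^*C_P\|\Ftbf^n\|_\Delta^2$.

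Collecting, $T_{\mathrm i}^n\ge 0$ gives $\Gamma^n\ge(c_H^*-\delta C_1^*C_P)\|\Ftbf^n\|_\Delta^2$, so one sets $\delta^*\coloneq c_H^*/(C_1^*C_P)$ and $c_\delta\coloneq c_H^*-\delta C_1^*C_P>0$ for $\delta\in(0,\delta^*)$; note $c_\delta$ is independent of the mesh. For the upper bound, at $n=0$ the convention $\Phibf^{-1}=\Phibf^0$ makes $T_{\mathrm i}^0=0$, so $\Gamma^0\le(C_H^*+\delta C_1^*C_P)\|\Ftbf^0\|_\Delta^2$ with a mesh-independent constant; for $n\ge 1$, testing the Poisson equation against $\Phibf^n-\Phibf^{n-1}$ and applying Lemma~\ref{lem_Poincare_dis} gives $\|D_x^c(\Phibf^n-\Phibf^{n-1})\|_{\Delta,x}\le C_P\|\rhotbf^n-\rhotbf^{n-1}\|_{\Delta,x}$, and the discrete continuity equation \eqref{scheme_rho_UW_d} expresses $\rhotbf^n-\rhotbf^{n-1}$ as $\Dt$ times terms controlled by $\|\Ftbf^n\|_\Delta$ through \eqref{estim_J2_dis}, \eqref{estim_Jb_dis}, \eqref{estim_rhot_dis} and the $L^\infty$ bounds, so that $T_{\mathrm i}^n\le C_\delta'\|\Ftbf^n\|_\Delta^2$ and $\Gamma^n\le C_\delta\|\Ftbf^n\|_\Delta^2$. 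The main obstacle is precisely this last term: it has no continuous counterpart, and because of the $1/\Dt$ weight one must genuinely invoke the scheme to see that $D_x^c(\Phibf^n-\Phibf^{n-1})$ is itself of order $\Dt$, so that $T_{\mathrm i}^n$ does not destroy the equivalence — while keeping track that $c_\delta$ and the $n=0$ upper constant, which are the ones actually used in the proof of Theorem~\ref{thm.hypoco_d}, remain independent of the discretization parameters.
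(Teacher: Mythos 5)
Your proof is correct and follows the route the paper intends: the paper omits the argument, deferring to the continuous Lemma~\ref{lem.normeq} and to the cited reference, and your treatment of $H^n$ (cell-wise Taylor expansion using the discrete maximum principle and $\rho_m\le\rhoinfs\le\rho_M$) and of the cross term (Cauchy--Schwarz with \eqref{estim_J2_dis} and \eqref{estim_dxphi_dis}) is exactly that adaptation. Your handling of the extra implicit-in-time term --- nonnegativity for the lower bound, and the scheme-based observation that $D_x^c(\Phibf^n-\Phibf^{n-1})$ is of order $\Dt$ (the content of \eqref{estim_dtxphi_dis}) for the upper bound, with the $n=0$ convention making it vanish where the constant is actually used --- is the only genuinely new ingredient and is handled correctly.
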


We also have the following estimates on $(\Phibf^n_i)_{i\in\II}$. They are obtained as in the continuous setting, using discrete integration by parts and the Poincaré inequality recalled in Lemma~\ref{lem_Poincare_dis}, although the upwind fluxes give rise to additional terms that must be controlled.
\begin{Lemma}\label{lem_estim_phi_dis}
    One has for all $n\geq 0$
    \begin{align}
        & \|D_x^c\Phibf^n\|_{\Delta, x}\leq C_P\,\|\Pi^\Delta \Ftbf^n\|_\Delta, \label{estim_dxphi_dis}\\        & \| D_x^c\Phibf^{n+1}-D_x^c\Phibf^{n}\|_{\Delta, x}\leq \Delta t\left( C_1^*\|(I-\Pi^\Delta) \Ftbf^{n+1}\|_\Delta\right.\nonumber \\
        & \qquad        \left.+ C_7^*\|1-\rho_1^{n+1}\rho_2^{n+1}\|_{L^2(\T)}+ C_1^*\|\Ftbf^{n+1}\|_\Delta\right), \label{estim_dtxphi_dis}
    \end{align}
    where $C_P$ is the discrete Poincaré constant of Lemma \ref{lem_Poincare_dis}, $C_1^*$ is defined by \eqref{def.CJ*} and
    \begin{equation}\label{def.C7*}    
        C_7^*\coloneq C_P\sqrt{\rho_1^{\infty,*}+\rho_2^{\infty,*}}.
    \end{equation}      
\end{Lemma}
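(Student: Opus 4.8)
The plan is to mirror the proof of the continuous estimates \eqref{estim.dxphi}--\eqref{estim.dtdxphi}, replacing continuous integration by parts by the discrete identities of Lemma~\ref{ref_IPP_discret} and the continuous Poincaré inequality by Lemma~\ref{lem_Poincare_dis}, while carefully accounting for the extra numerical-viscosity terms that appear in the discrete continuity equation \eqref{scheme_rho_UW_d}. For \eqref{estim_dxphi_dis}, I would test the discrete Poisson equation \eqref{Poisson_d} against $\Phibf^n$ in the $\lla\cdot,\cdot\rra_{\Delta,x}$ scalar product: using \eqref{IPP2_decentre}-type manipulations (or rather the basic identity $\lla -D_x^cD_x^c\Phibf^n,\Phibf^n\rra_{\Delta,x}=\|D_x^c\Phibf^n\|_{\Delta,x}^2$ obtained from \eqref{IPP_centre}) one gets $\|D_x^c\Phibf^n\|_{\Delta,x}^2=\lla\rhotbf^{\,n},\Phibf^n\rra_{\Delta,x}$, then Cauchy--Schwarz, the discrete Poincaré inequality $\|\Phibf^n\|_{\Delta,x}\le C_P\|D_x^c\Phibf^n\|_{\Delta,x}$ (valid since $\sum_i\Dx\phi^n_{k,i}=0$), and finally the discrete moment identity \eqref{estim_rhot_dis} to replace $\|\rhotbf^{\,n}\|_{\Delta,x}$ by $\|\Pi^\Delta\Ftbf^n\|_\Delta$. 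Dividing through by $\|D_x^c\Phibf^n\|_{\Delta,x}$ yields \eqref{estim_dxphi_dis} with constant $C_P$.

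For \eqref{estim_dtxphi_dis}, I would take the difference of the Poisson equation \eqref{Poisson_d} at times $n+1$ and $n$, so that $-D_x^cD_x^c(\Phibf^{n+1}-\Phibf^n)=\rhotbf^{\,n+1}-\rhotbf^{\,n}$, then test against $\Phibf^{n+1}-\Phibf^n$ to obtain $\|D_x^c(\Phibf^{n+1}-\Phibf^n)\|_{\Delta,x}^2=\lla \rhotbf^{\,n+1}-\rhotbf^{\,n},\Phibf^{n+1}-\Phibf^n\rra_{\Delta,x}$. Now I substitute the discrete continuity equation \eqref{scheme_rho_UW_d} for $\rhotbf^{\,n+1}-\rhotbf^{\,n}$, which produces three contributions: the transport term $-\Dt(D_x^c\Jtbf^{n+1})$, the viscous term $+\Dt\frac{\Dx}{2}(D_x^+D_x^-+D_x^-D_x^+)\Jtbf^{\,\mathrm{s},n+1}$, and the reaction term $\Dt(1-\rho^{n+1}_{1,i}\rho^{n+1}_{2,i})(1,1)^\top$. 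For the transport term, a discrete integration by parts via \eqref{IPP_centre} moves $D_x^c$ onto $\Phibf^{n+1}-\Phibf^n$, giving $\Dt\,\lla\Jtbf^{n+1},D_x^c(\Phibf^{n+1}-\Phibf^n)\rra_{\Delta,x}$, controlled by $\Dt\,C_1^*\|(I-\Pi^\Delta)\Ftbf^{n+1}\|_\Delta\,\|D_x^c(\Phibf^{n+1}-\Phibf^n)\|_{\Delta,x}$ using \eqref{estim_J2_dis}. For the reaction term, Cauchy--Schwarz, the identity $\|(1-\rho_1\rho_2,1-\rho_1\rho_2)^\top\|_{\Delta,x}^2=(\rho_1^{\infty,*}+\rho_2^{\infty,*})\|1-\rho_1^{n+1}\rho_2^{n+1}\|_{L^2(\T)}^2$ (using $\rho_1^{\infty,*}\rho_2^{\infty,*}=1$), and the discrete Poincaré inequality applied to $\Phibf^{n+1}-\Phibf^n$ give $\Dt\,C_7^*\|1-\rho_1^{n+1}\rho_2^{n+1}\|_{L^2(\T)}\,\|D_x^c(\Phibf^{n+1}-\Phibf^n)\|_{\Delta,x}$, with $C_7^*$ as in \eqref{def.C7*}.

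The genuinely new step, and the one I expect to be the main obstacle, is the viscous term $\Dt\frac{\Dx}{2}\lla (D_x^+D_x^-+D_x^-D_x^+)\Jtbf^{\,\mathrm{s},n+1},\Phibf^{n+1}-\Phibf^n\rra_{\Delta,x}$, which has no continuous counterpart. The idea is to integrate by parts twice using \eqref{IPP_decentre}: each pair $D_x^+D_x^-$ moves onto $\Phibf^{n+1}-\Phibf^n$ to become $D_x^-D_x^+$, and symmetrically, so that the factor $\frac{\Dx}{2}(D_x^+D_x^-+D_x^-D_x^+)$ hitting $\Phibf^{n+1}-\Phibf^n$ can be recognized—via \eqref{prop_gradients_dis} and \eqref{IPP2_decentre}—as producing a term involving $D_x^cD_x^c(\Phibf^{n+1}-\Phibf^n)=-(\rhotbf^{\,n+1}-\rhotbf^{\,n})$ up to the $\Dx$ prefactor, or alternatively one simply bounds $\frac{\Dx}{2}\|(D_x^+D_x^-+D_x^-D_x^+)(\Phibf^{n+1}-\Phibf^n)\|_{\Delta,x}$ by $2\|D_x^c(\Phibf^{n+1}-\Phibf^n)\|_{\Delta,x}$ using the estimate \eqref{estim_dxu_disc} applied twice (since $\Dx D_x^c$ is a contraction and $D_x^+D_x^-+D_x^-D_x^+ = 2(D_x^c)^2$-like up to boundary algebra). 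Then Cauchy--Schwarz and the bound $\|\Jtbf^{\,\mathrm{s},n+1}\|_{\Delta,x}\le C_1^*\|\Ftbf^{n+1}\|_\Delta$ from \eqref{estim_Jb_dis} give the third term $\Dt\,C_1^*\|\Ftbf^{n+1}\|_\Delta\,\|D_x^c(\Phibf^{n+1}-\Phibf^n)\|_{\Delta,x}$. Collecting the three contributions, dividing through by $\|D_x^c(\Phibf^{n+1}-\Phibf^n)\|_{\Delta,x}$, yields \eqref{estim_dtxphi_dis}. One must be slightly careful that the skewed moment $\Jtbf^{\,\mathrm{s}}$ is only controlled by the full norm $\|\Ftbf^{n+1}\|_\Delta$ and not by the microscopic part alone—this is why the third term on the right-hand side of \eqref{estim_dtxphi_dis} is unavoidable and distinguishes the discrete estimate from its continuous analogue \eqref{estim.dtdxphi}.
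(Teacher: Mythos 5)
Your proposal is correct and follows essentially the same route as the paper: testing the differenced discrete Poisson equation against $\Phibf^{n+1}-\Phibf^n$, substituting the discrete continuity equation \eqref{scheme_rho_UW_d}, handling the viscous term via \eqref{IPP2_decentre} together with \eqref{estim_dxu_disc}, and using the discrete Poincar\'e inequality and the moment bounds \eqref{estim_J2_dis}, \eqref{estim_Jb_dis}. The only cosmetic point is the factor $2$ on the skewed-current contribution (so that the last term should read $2C_1^*\|\Ftbf^{n+1}\|_\Delta$), which the paper's own computation also produces but omits from the final statement.
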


\begin{proof}
    Estimate \eqref{estim_dxphi_dis} can be proven exactly in the same way as \eqref{estim.dxphi}, by using the discrete Poisson equation \eqref{Poisson_d}, applying the discrete Poincaré inequality (Lemma \ref{lem_Poincare_dis}) and equality~\eqref{estim_rhot_dis}.
    
    Regarding estimate \eqref{estim_dtxphi_dis}, we obtain it by taking the difference of the scheme \eqref{Poisson_d} at times $n$ and $n+1$ and using Lemma \ref{lem_eq_moments_dis} :
    \begin{align*}
        \| D_x^c\Phibf^{n+1}-D_x^c\Phibf^{n}\|_{\Delta, x}^2&=\langle \Phibf^{n+1}-\Phibf^n,-D_x^cD_x^c(\Phibf^{n+1}-\Phibf^n)\rangle_{\Delta,x}\\
        &= \langle\Phibf^{n+1}-\Phibf^n,\rhotbf^{n+1}-\rhotbf^n\rangle_{\Delta,x}\\
        &=\Dt\left\langle \Phibf^{n+1}-\Phibf^n, -D_x^c\Jtbf^{n+1}+(1-\rho_1^{n+1}\rho_2^{n+1})\begin{pmatrix} 1\\ 1 \end{pmatrix}\right\rangle_{\Delta,x}\\
        &\quad + \Dt\left\langle \Phibf^{n+1}-\Phibf^n , \frac{\Dx}{2}(D_x^+D_x^-+D_x^-D_x^+)\Jtbf^{s,n+1}\right\rangle_{\Delta,x}.
    \end{align*}
    Using the discrete integration by parts \eqref{IPP_centre} and \eqref{IPP2_decentre}, we get
    \begin{align*}
        \| D_x^c\Phibf^{n+1}-D_x^c\Phibf^{n}\|_{\Delta, x}^2&=\Dt \langle D_x^c\Phibf^{n+1}-D_x^c\Phibf^{n},\Jtbf^{n+1}-2\Dx\,D_x^c \Jtbf^{s,n+1} \rangle_{\Delta,x}\\
        &\quad + \Dt\left\langle \Phibf^{n+1}-\Phibf^n,(1-\rho_1^{n+1}\rho_2^{n+1})\begin{pmatrix} 1\\ 1 \end{pmatrix}\right\rangle_{\Delta,x}.
    \end{align*}
    Now, thanks to the Cauchy-Schwarz inequality, estimate \eqref{estim_dxu_disc} and the discrete Poincaré inequality, we obtain
    \begin{align*}
        \| D_x^c\Phibf^{n+1}-D_x^c\Phibf^{n}\|_{\Delta, x}^2 &\leq \Dt \|D_x^c\Phibf^{n+1}-D_x^c\Phibf^{n}\|_{\Delta,x}\left[ \|\Jtbf^{n+1}\|_{\Delta,x}+2\Dx \|D_x^c\Jtbf^{s,n+1}\|_{\Delta,x}\right]\\
        & +\Dt \|\Phibf^{n+1}-\Phibf^n\|_{\Delta,x}\sqrt{\rho_1^{\infty,*}+\rho_2^{\infty,*}}\|1-\rho_1^{n+1}\rho_2^{n+1}\|_{L^2(\T)}\\
        & \leq \Dt \|D_x^c\Phibf^{n+1}-D_x^c\Phibf^{n}\|_{\Delta,x}\left( \|\Jtbf^{n+1}\|_{\Delta,x}+2\|\Jtbf^{s,n+1}\|_{\Delta,x}\right.\\ &\qquad\left.+C_7^*\|1-\rho_1^{n+1}\rho_2^{n+1}\|_{L^2(\T)}\right),
    \end{align*}
    which concludes the proof thanks to estimates \eqref{estim_J2_dis} and \eqref{estim_Jb_dis}.
\end{proof}

\begin{proof}[Proof of Theorem \ref{thm.hypoco_d}]
    We adapt the proof of Theorem \ref{thm.hypoco} to the discrete setting. For the sake of clarity, the terms are numbered as follows: discrete counterparts of the continuous terms retain the same numbering (for instance, $T_1$ becomes $T_1^\Delta$), while additional terms arising from the time and space discretizations are denoted by $R_1^\Delta$, $R_2^\Delta$, and so on.
    
    Using the microscopic coercivity \eqref{estim_coercivite_micro_dis}, we have
    \begin{align}
        \Gamma^{n+1}-\Gamma^n &\leq  -  \Delta t\,\mathcal{D}^{n+1}-C_{4}^*\|\Ftbf^{n+1}-\Ftbf^n\|_{\Delta}^2 + \delta\left(\langle \Jtbf^{n+1},D_x^c\Phibf^{n+1}\rangle_{\Delta,x}-\langle \Jtbf^{n},D_x^c\Phibf^{n}\rangle_{\Delta,x}\right)\nonumber\\
        &+\frac{\delta}{2\Delta t}\sum_{i\in\II}\Dx \left[\left((D_x^c\Phibf^{n+1})_i-(D_x^c\Phibf^{n})_i\right)^2- \left((D_x^c\Phibf^n)_i-(D_x^c\Phibf^{n-1})_i\right)^2\right].\label{hypoco_d.1}
    \end{align}
    Let us start with the term corresponding to the discrete counterpart of $\frac{\dd}{\dd t}\langle \Jtbf,\partial_x\Phibf\rangle_x$: 
    \begin{equation*}
        \lla \Jtbf^{n+1},D_x^c\Phibf^{n+1}\rra_{\Delta,x}-\lla \Jtbf^n,D_x^c\Phibf^n \rra_{\Delta,x}=\lla \Jtbf^{n+1}-\Jtbf^n,D_x^c\Phibf^{n+1}\rra_{\Delta,x}+T_4^\Delta+R_1^\Delta,
    \end{equation*}
    where 
    \begin{align*}
        T_4^\Delta&=\lla \Jtbf^{n+1},D_x^c\Phibf^{n+1}-D_x^c\Phibf^{n}\rra_{\Delta,x},\\
        R_1^\Delta&=\lla \Jtbf^n,D_x^c\Phibf^{n+1}\rra_{\Delta,x}-\lla \Jtbf^{n+1},D_x^c\Phibf^{n+1}\rra_{\Delta,x}+\lla \Jtbf^{n+1},D_x^c\Phibf^{n}\rra_{\Delta,x}-\lla \Jtbf^n,D_x^c\Phibf^{n}\rra_{\Delta,x}. 
    \end{align*}
    Using the scheme \eqref{scheme_J1_UW_d}--\eqref{scheme_J2_UW_d} for $\Jtbf$, we can rewrite
    \begin{equation*}
        \lla \Jtbf^{n+1}-\Jtbf^n,D_x^c\Phibf^{n+1}\rra_{\Delta,x}=T_1^\Delta+T_2^\Delta+T_3^\Delta+R_2^\Delta,
    \end{equation*}
    where
   \begin{align*}
        T_{1}^\Delta&= -\Delta t \lla D_x^c \Stbf^{n+1},D_x^c\Phibf^{n+1}\rra_{\Delta,x},\\
        T_{2}^\Delta&=-\Delta t\lla\begin{pmatrix} D_1^\Delta D_x^c\rhot_1^{n+1}\\ D_2^\Delta D_x^c\rhot_2^{n+1}\end{pmatrix},D_x^c\Phibf^{n+1}\rra_{\Delta,x},\\
        T_{3}^\Delta&= -\Delta t \lla \begin{pmatrix} \rhot_2^{n+1}\Jt_1^{n+1}\\ \rhot_2^{n+1}\Jt_1^{n+1} \end{pmatrix},D_x^c\Phibf^{n+1}\rra_{\Delta,x},\\
        R_2^\Delta&= \frac{\Delta t\Dx}{2}\langle (D_x^+D_x^-+D_x^-D_x^+)\overbar{\Sbf}^{n+1}_\Ftbf,D_x^c\Phibf^{n+1}\rangle_{\Delta,x}.
    \end{align*}
    Let us first deal with the residual term $R_{1}^\Delta$. Reorganizing the terms and integrating by parts, we have
    \begin{align*}
        R_{1}^\Delta&=\lla \Jtbf^n-\Jtbf^{n+1},D_x^c\Phibf^{n+1}-D_x^c\Phibf^{n}\rra_{\Delta,x}\\
        &= \lla D_x^c\Jtbf^{n+1}-D_x^c\Jtbf^n,\Phibf^{n+1}-\Phibf^n\rra_{\Delta,x}.
    \end{align*}
    The next step is to replace the discrete space derivatives of $\Jtbf$ using the scheme \eqref{scheme_rho_UW_d}, which yields $R_{1}^\Delta=R_{11}^\Delta+R_{12}^\Delta+R_{13}^\Delta$, with
    \begin{align*}
        R_{11}^\Delta&= -\frac{1}{\Delta t}\sum_{i\in\II}\Delta x\left(\rhotbf_{i}^{\,n+1}-2\rhotbf_{i}^{\,n}+\rhotbf_{i}^{\,n-1}\right)(\Phibf_i^{n+1}-\Phibf_i^n),\\
        R_{12}^\Delta&=\frac{\Dx}{2}\sum_{i\in\II}\Dx \left[\left((D_x^+D_x^-+D_x^-D_x^+)(\Jtbf^{\mathrm{s},n+1}-\Jtbf^{\mathrm{s},n})\right)_i\right](\Phibf_i^{n+1}-\Phibf_i^n),\\
        R_{13}^\Delta&= \sum_{i\in\II}\Dx \left(\rho_{1,i}^{n+1}\rho_{2,i}^{n+1}-\rho_{1,i}^n\rho_{2,i}^n\right)(\Phibf_{i}^{n+1}-\Phibf_i^n).
    \end{align*}    
    As in \cite{BCLR2025}, we combine the term $R_{11}^\Delta$ with the last line of \eqref{hypoco_d.1} using the auxiliary equation \eqref{Poisson_d}, an integration by parts, and the identity $-a(a-b)=-(a^2-b^2)/2-(a-b)^2/2$, with $a=\left(D_x^c(\Phibf^{n+1}-\Phibf_i^n)\right)_i$ and  $b=\left(D_x^c(\Phibf^{n}-\Phibf_i^{n-1})\right)_i$, therefore yielding 
    \begin{equation*}
        \frac{1}{2\Delta t}\sum_{i\in\II}\Dx \left[\left((D_x^c\Phibf^{n+1})_i-(D_x^c\Phibf^{n})_i\right)^2- \left((D_x^c\Phibf^n)_i-(D_x^c\Phibf^{n-1})_i\right)^2\right]+R_{11}^\Delta\leq 0.
    \end{equation*}
    
    Let us now turn to the estimates of $R_{12}^\Delta$ and $R_{13}^\Delta$. These terms will be bounded in terms of $\|\Ftbf^{n+1}-\Ftbf^n\|_{\Delta}^2$, which will ultimately be controlled by the additional dissipation term appearing in the microscopic coercivity estimate~\eqref{estim_coercivite_micro_dis}.
    
    First of all, we apply two integration by parts and use the auxiliary scheme \eqref{Poisson_d} to obtain
    \begin{align*}
        R_{12}^\Delta&=2\Dx \sum_{i\in\II}\Dx \left(\Jtbf_{i}^{\mathrm{s},n+1}-\Jtbf_{i}^{\mathrm{s},n}\right)\left(D_x^cD_x^c(\Phibf^{n+1}-\Phibf^n)\right)_i\\
        &=-2\Dx\sum_{i\in\II}\Dx\left(\Jtbf_{i}^{\mathrm{s},n+1}-\Jtbf_{i}^{\mathrm{s},n}\right)\left(\rhotbf_{i}^{n+1}-\rhotbf_{i}^n\right).
    \end{align*}
    Using the moments estimates \eqref{estim_rhot_dis} and \eqref{estim_Jb_dis}, and assuming without loss of generality that $\Dx<1$, we finally get
    \begin{equation}\label{estim_T132}
        |R_{12}^\Delta|\leq 2\Dx \,C_1^*\,\|\Ftbf^{n+1}-\Ftbf^n\|_\Delta^2\leq 2\,C_1^*\,\|\Ftbf^{n+1}-\Ftbf^n\|_\Delta^2.
    \end{equation}
     Using the Cauchy-Schwarz inequality, we obtain
    \begin{equation}\label{T133_1}
        |R_{13}^\Delta|\leq \|\rho_1^{n+1}\rho_2^{n+1}-\rho_1^n\rho_2^n\|_{L^2(\T)}\|\Phibf^{n+1}-\Phibf^n\|_{\Delta,x}.
    \end{equation}
     We treat the first factor by linearizing and using the $L^\infty$ bounds:
    \begin{align*}
        \|\rho_1^{n+1}\rho_2^{n+1}-\rho_1^n\rho_2^n\|_{L^2(\T)}^2 &=  \|\rho_1^{n+1}(\rho_2^{n+1}-\rho_2^n)+\rho_2^n(\rho_1^{n+1}-\rho_1^n)\|_{L^2(\T)}^2\\
        &\leq 2 \left(\rho_M^2\|\rho_2^{n+1}-\rho_2^n\|^2_{L^2(\T)}+\rho_m^{-2}\|\rho_1^{n+1}-\rho_1^n\|_{L^2(\T)}^2\right)\\
        & \leq 2 \max(\rho_M^2\rho_2^{\infty,*},\rho_m^{-2}\rho_{1}^{\infty,*})\|\rhobf_{\Fbf}^{n+1}-\rhobf_\Fbf^n\|_{\Delta,x}^2,
    \end{align*}
    from which we deduce
    \[
    \|\rho_1^{n+1}\rho_2^{n+1}-\rho_1^n\rho_2^n\|_{L^2(\T)} \leq C_{8}^*\,\|\Ftbf^{n+1}-\Ftbf^n\|_{\Delta},
    \]
    with $C_{8}^*=2 \max(\rho_M^2\rho_2^{\infty,*},\rho_m^{-2}\rho_{1}^{\infty,*})$. Now, for the second factor of \eqref{T133_1}, remark that by the Poincaré inequality and an integration by parts, 
    \begin{align*}
        \|\Phibf^{n+1}-\Phibf^n\|_{\Delta,x}^2&\leq C_P^2\,\|D_x^c(\Phibf^{n+1}-\Phibf^n)\|_{\Delta,x}^2\\
        &\leq -C_P^2\lla \Phibf^{n+1}-\Phibf^n,D_x^cD_x^c(\Phibf^{n+1}-\Phibf^n)\rra_{\Delta,x}.
    \end{align*}
    Using now the auxiliary scheme \eqref{Poisson_d} and the Cauchy-Schwarz inequality, we get
    \begin{align*}
        \|\Phibf^{n+1}-\Phibf^n\|_{\Delta,x}^2&\leq C_P^2\lla \Phibf^{n+1}-\Phibf^n,\rhotbf^{\,n+1}-\rhotbf^{\,n}\rra_{\Delta,x}\\
        &\leq C_P^2 \,\|\Phibf^{n+1}-\Phibf^n\|_{\Delta,x}\|\rhotbf^{n+1}-\rhotbf^{\,n}\|_{\Delta,x},
    \end{align*}
    from which we obtain
    \[
    \|\Phibf^{n+1}-\Phibf^n\|_{\Delta,x}\leq C_P^2 \|\rhotbf^{\,n+1}-\rhotbf^{\,n}\|_{\Delta,x}\leq C_P^2 \,\|\Ftbf^{n+1}-\Ftbf^n\|_{\Delta}.
    \]
    Finally, we get
    \begin{equation}\label{estim_T133}
        |R_{13}^\Delta|\leq C_{8}^*\,C_P^2\,\|\Ftbf^{n+1}-\Ftbf^n\|_{\Delta}^2.
    \end{equation}
    
   The rest of the proof follows the same steps as in the continuous setting, with the additional task of controlling the numerical viscosity. The terms $T_k^\Delta$, for $k=1,2,3$ correspond respectively to the discrete counterparts of the continuous terms $T_k$, and are handled in exactly the same manner, yielding
    \begin{align*}
        T_{1}^\Delta&\leq \Delta t \,C_2^* \|(I-\Pi^\Delta)\Ftbf^{n+1}\|_{\Delta}\|\Pi^\Delta\Ftbf^{n+1}\|_\Delta,\\
        T_{2}^\Delta&\leq -\Delta t\,\min(\underline{D}_1,\underline{D}_2)\|\Pi^\Delta\Ftbf^{n+1}\|_\Delta^2,\\
        T_{3}^\Delta&\leq \Delta t \, C_9^*\|(I-\Pi^\Delta)\Ftbf^{n+1}\|_{\Delta}\|\Pi^\Delta\Ftbf^{n+1}\|_\Delta,
    \end{align*}    
    with $ C_9^*=\max(\rho_m^{-1},\rho_M)\,C_1^*C_P$.
    
    The term $R_{2}^\Delta$ arises from the numerical viscosity introduced by the upwind fluxes. It can be estimated by combining \eqref{estim_dxu_disc} with the discrete moment estimates \eqref{estim_rhot_dis} and \eqref{estim_Sb_dis}, which yields
    \[R_{2}^\Delta \leq 2\,\Delta t\, C^*_{3}\,\|(I-\Pi^\Delta)\Ftbf^{n+1}\|_{\Delta}\|\Pi^\Delta\Ftbf^{n+1}\|_\Delta.\]
    Gathering all these estimates, we conclude that
    \begin{align*}
        \lla \Jtbf^{n+1}-\Jtbf^n,D_x^c\Phibf^{n+1}\rra_{\Delta,x}&\leq -\Delta t\,\min(\underline{D}_1,\underline{D}_2)\|\Pi^\Delta\Ftbf^{n+1}\|_\Delta^2\\
        &+\Delta t\left(C_2^*+C_9^*+2C_3^*\right)\,\|(I-\Pi^\Delta)\Ftbf^{n+1}\|_{\Delta}\|\Pi^\Delta\Ftbf^{n+1}\|_\Delta.
    \end{align*}
    
    Finally, the term $T_{4}^\Delta$ is the discrete counterpart of the term $T_4$. Using the Cauchy-Schwarz inequality and estimates \eqref{estim_J2_dis} and \eqref{estim_dtxphi_dis}, we have
    \begin{gather*}
        T_{4}^\Delta\leq \Delta t(C_1^*)^2\|(I-\Pi^\Delta)\Ftbf^{n+1}\|_\Delta^2+\Delta t C_1^*C_7^*\|(I-\Pi^\Delta)\Ftbf^{n+1}\|_\Delta\|1-\rho_1^{n+1}\rho_2^{n+1}\|_{L^2(\T)}\\
        + \Delta t (C_1^*)^2\|(I-\Pi^\Delta)\Ftbf^{n+1}\|_{\Delta}\|\Ftbf^{n+1}\|_\Delta.
    \end{gather*}
    
    Then, using Young's inequality, we have for $\eta>0$ (which will be chosen conveniently later) some constants $C_\eta>0$ and $C_{10}^*$ such that
    \[
    T_{4}^\Delta\leq \Delta t C_\eta\,\|(I-\Pi^\Delta)\Ftbf^{n+1}\|_\Delta^2+\Delta t C_{10}^*\|1-\rho_1^{n+1}\rho_2^{n+1}\|_{L^2(\T)}^2+\Delta t\frac{\eta}{2}\,\|\Ftbf^{n+1}\|_\Delta^2.
    \]
    By collecting the estimates of all the terms, we obtain
    \begin{align*}
        \Gamma^{n+1}-\Gamma^n\leq &-\Delta t\,\mathcal{D}^{n+1}-C^*_{4}\|\Ftbf^{n+1}-\Ftbf^n\|_\Delta^2-\Delta t\,\delta\min (\underbar{D}_1,\underbar{D}_2)\|\Pi^\Delta\Ftbf^{n+1}\|^2_\Delta\\
        & +\Delta t\,\delta(C_2^*+C_9^*+2C_{3}^*)\|(I-\Pi^\Delta)\Ftbf^{n+1}\|_\Delta\|\Pi^\Delta\Ftbf^{n+1}\|_\Delta\\
        & + \Delta t\,\delta\,C_\eta\|(I-\Pi^\Delta)\Ftbf^{n+1}\|_\Delta^2+\Delta t\,\delta\,C_{10}^*\|1-\rho_1^{n+1}\rho_2^{n+1}\|_{L^2(\T)}^2\\
        &+\delta\frac{\eta}{2}\,\|\Ftbf^{n+1}\|_\Delta^2+2\delta\, C_1^*\|\Ftbf^{n+1}-\Ftbf^n\|_\Delta^2+\delta\,C_{8}^*C_P^2\|\Ftbf^{n+1}-\Ftbf^n\|_\Delta^2.
    \end{align*}
    Now, using that 
    \[
    \|\Ftbf^{n+1}\|^2_\Delta=\|\Pi^\Delta\Ftbf^{n+1}\|_\Delta^2+\|(I-\Pi^\Delta)\Ftbf^{n+1}\|_\Delta^2,
    \]
    and Lemma \ref{lem_estimD_d},  we finally get
    \begin{align*}
        \Gamma^{n+1}-\Gamma^n\leq & -\left(C^*_{4}-\delta(2 C_1^*+C_{8}^*C_P^2)\right)\|\Ftbf^{n+1}-\Ftbf^n\|_\Delta^2\\
        & -\delta\Delta t\left(\min(\underbar{D}_1,\underbar{D}_2)-\frac{\eta}{2}\right)\|\Pi^\Delta\Ftbf^{n+1}\|_\Delta^2\\
        &+\delta\Delta t\left(C_2^*+C_9^*+2C_{3}^*\right)\|(I-\Pi^\Delta)\Ftbf^{n+1}\|_\Delta\|\Pi^\Delta\Ftbf^{n+1}\|_\Delta\\
        &-\Delta t\left(C_6^*-\delta\left(C_\eta+\frac{\eta}{2}\right)\right)\|(I-\Pi^\Delta)\Ftbf^{n+1}\|_\Delta^2\\
        &-\Delta t\left(C_5^*-\delta C_{10}^*\right)\|1-\rho_1^{n+1}\rho_2^{n+1}\|_{L^2(\T)}^2.
    \end{align*}
     First, we fix $\eta >0$ in such a way that the second term of the right hand side is nonpositive, namely $\eta \leq 2 \min(\underbar{D}_1,\underbar{D}_2)$. Second, we also assume that $0<\delta<\delta_0^*$, with
    \[
    \delta_0^*=\min\left(\frac{C_{4}^*}{2C_1^*+C_{8}^*C_P^2},\frac{C_5^*}{C_{10}^*},\frac{C_{9}^*}{C_\eta+\eta/2}\right), 
    \]
    ensuring that the first and the last two terms are nonpositive.
    
    With such an $\eta$ and $\delta$, we now have
    \begin{gather*}
        \frac{\Gamma^{n+1}-\Gamma^n}{\Delta t}\leq  -\delta\alpha_1\|\Pi^\Delta\Ftbf^{n+1}\|_\Delta^2 - \left(C_6^*-\delta \alpha_2\right)\|(I-\Pi^\Delta)\Ftbf^{n+1}\|_\Delta^2\\
        +\delta\alpha_3 \|(I-\Pi^\Delta)\Ftbf^{n+1}\|_\Delta\|\Pi^\Delta\Ftbf^{n+1}\|_\Delta,
    \end{gather*} 
    with $\alpha_1=\min(\underbar{D}_1,\underbar{D}_2)-\eta/2$, $\alpha_2=C_\eta+\eta/2$, and $\alpha_3=C_2^*+C_9^*+2C_{3}^*$. If we choose $\delta<\delta_1$ with
    \[
    \delta_1<\frac{C_6^*\alpha_1}{\alpha_3^2+\alpha_1\alpha_2},
    \]
    we then get
    \[
    \frac{\Gamma^{n+1}-\Gamma^n}{\Delta t}\leq- \frac{\delta\alpha_1}{2}\|\Pi^\Delta\Ftbf^{n+1}\|_\Delta^{2}-\frac{1}{2}(C_6^*-\delta\alpha_2)\|(I-\Pi^\Delta)\Ftbf^{n+1}\|_\Delta^2,
    \]
    from which we conclude the proof using Lemma~\ref{lem.normeq.d}.
\end{proof}

\section*{Acknowledgments}
TR received funding from the European Union's Horizon Europe research and innovation program under the Marie Skłodowska-Curie Doctoral Network DataHyking (Grant No. 101072546), and from UniCA$_{JEDI}$ Investments in the Future project managed by the National Research Agency (ANR) with the reference number ANR-15-IDEX-01.	 

TL received funding from the European Research Council (ERC) under the European Union’s Horizon 2020 research and innovation program (grant agreement No 865711) and from the European Union's Horizon Europe research and innovation program under the Marie Skłodowska-Curie Doctoral Network DataHyking (Grant No. 101072546).

MBC  benefits from the support of the French government “Investissements d’Avenir” program integrated to France 2030, under the reference ANR-11-LABX-0020-01, and ANR Project Cookie (ANR-25-CE40-5565-02).

TR would also like to thanks Isabelle Tristani for fruitful discussions about the continuous part of this work. 

\bibliographystyle{acm}
\bibliography{biblio_hypoco_system}

\appendix

\section{Proof of the existence and maximum principle}\label{app:ExistenceMaxPrinciple}

To prove Theorem~\ref{thm_bounds_existence}, we follow along the lines of  \cite[Theorem 3]{BCLR2025}, adapted to the upwind fluxes. Let us first introduce the following truncated version of the scheme \eqref{scheme_1_nonlin_d1}--\eqref{scheme_2_nonlin_d1}:
\begin{align}
    & \frac{f_{1,ij}^{n+1}-f_{1,ij}^n}{\Delta t}+\frac{1}{\Delta x\,\Delta v}\left(\mathcal{F}_{\iph,}^{n+1}-\mathcal{F}_{\imh,j}^{n+1}\right)=\chi_{1,j}-\overbar{\rho}_{2,i}^{n+1}\overbar{f}_{1,ij}^{n+1} ,\label{scheme_1_nonlin_trunc}\\
    & \frac{f_{2,ij}^{n+1}-f_{2,ij}^n}{\Delta t}+\frac{1}{\Delta x\,\Delta v}\left(\mathcal{G}_{\iph,j}^{n+1}-\mathcal{G}_{\imh,j}^{n+1}\right)=\chi_{2,j}-\overbar{\rho}_{1,i}^{n+1}\overbar{f}_{2,ij}^{n+1},\label{scheme_2_nonlin_trunc}
\end{align}
with the upwind fluxes \eqref{flux_F_UW_sd1}--\eqref{flux_G_UW_sd1}, and where the truncated quantities are defined by
\begin{equation}\label{def_f_trunc}
    \overbar{f}_{1,ij}\coloneq\left\{\begin{array}{ll}
        \rho_m\chi_{1,j} & \text{ if } f_{1,ij}\leq \rho_m\chi_{1,j},  \\
        f_{1,ij} & \text{ if } \rho_m\chi_{1,j}\leq f_{1,ij}\leq \rho_M\chi_{1,j},\\
        \rho_M\chi_{1,j} & \text{ if } f_{1,ij} \geq \rho_M\chi_{1,j},
    \end{array}\right.
\end{equation}
and
\begin{equation}\label{def_g_trunc}
    \overbar{f}_{2,ij}\coloneq\left\{\begin{array}{ll}
        \rho_M\inv\chi_{2,j} & \text{ if } f_{2,ij}\leq \rho_M\inv\chi_{2,j},  \\
        f_{2,ij} & \text{ if } \rho_M\inv\chi_{2,j}\leq f_{2,ij}\leq \rho_m\inv\chi_{2,j},\\
        \rho_m\inv\chi_{2,j} & \text{ if } f_{2,ij} \geq \rho_m\inv\chi_{2,j}.
    \end{array}\right.
\end{equation}
The corresponding truncated densities are then given by
\begin{equation*}
    \overbar{\rho}_{1,i}=\sum_{j\in\J}\Delta v\,\overbar{f}_{1,ij},\qquad \overbar{\rho}_{2,i}=\sum_{j\in\J}\Delta v\,\overbar{f}_{2,ij} \qquad \forall i\in\II.
\end{equation*}

The proof of the existence of solutions to \eqref{scheme_1_nonlin_trunc}--\eqref{scheme_2_nonlin_trunc} relies on the following result which is proven for example in \cite[Lemma 1.4, Chapter II]{temam1984}.
\begin{Lemma}\label{lem_existence_Navier}
    Let $X$ be a finite dimensional Hilbert space with scalar product $\langle\cdot,\cdot\rangle_X$ and associated norm $\|\cdot\|_X$. Let $P:X\rightarrow X$ be a continuous mapping such that $\langle P(\xi),\xi\rangle_X >0$ for all $\xi\in X$ such that $\|\xi\|_X=k$ for some fixed $k>0$. Then there exists $\xi_0\in X$ such that $\|\xi_0\|_X\leq k$ and $P(\xi)=0$.
\end{Lemma}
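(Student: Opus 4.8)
This is the classical ``Brouwer-type'' lemma used to produce solutions of nonlinear finite-dimensional systems (here, the truncated scheme \eqref{scheme_1_nonlin_trunc}--\eqref{scheme_2_nonlin_trunc}), and the plan is to deduce it from Brouwer's fixed point theorem by a contradiction argument. As a preliminary, I would observe that since $X$ is a finite-dimensional Hilbert space, fixing an orthonormal basis identifies $(X,\langle\cdot,\cdot\rangle_X)$ isometrically with a Euclidean space; in particular the closed ball $\overline{B}_k\coloneq\{\xi\in X:\|\xi\|_X\le k\}$ is homeomorphic to a closed Euclidean ball, so Brouwer's theorem applies to any continuous self-map of $\overline{B}_k$.

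I would then argue by contradiction, assuming that $P(\xi)\neq 0$ for every $\xi\in\overline{B}_k$. In that case $\xi\mapsto\|P(\xi)\|_X$ is continuous and strictly positive on the compact set $\overline{B}_k$, so the normalized map
\[
\Psi\colon\overline{B}_k\longrightarrow\overline{B}_k,\qquad \Psi(\xi)\coloneq -\,k\,\frac{P(\xi)}{\|P(\xi)\|_X},
\]
is well defined and continuous, and its image lies on the sphere $\{\xi\in X:\|\xi\|_X=k\}\subset\overline{B}_k$. Brouwer's fixed point theorem then provides $\xi_0\in\overline{B}_k$ with $\Psi(\xi_0)=\xi_0$; since $\Psi$ takes its values on the sphere, necessarily $\|\xi_0\|_X=k$, so the sign hypothesis is available at $\xi_0$.

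It then suffices to compute the pairing: using $\xi_0=\Psi(\xi_0)$ together with $P(\xi_0)\neq 0$,
\[
\langle P(\xi_0),\xi_0\rangle_X=\langle P(\xi_0),\Psi(\xi_0)\rangle_X=-\,k\,\frac{\langle P(\xi_0),P(\xi_0)\rangle_X}{\|P(\xi_0)\|_X}=-\,k\,\|P(\xi_0)\|_X<0,
\]
which contradicts the assumption that $\langle P(\xi),\xi\rangle_X>0$ whenever $\|\xi\|_X=k$. Hence $P$ must vanish at some $\xi_0\in\overline{B}_k$, that is, there exists $\xi_0\in X$ with $\|\xi_0\|_X\le k$ and $P(\xi_0)=0$, as claimed. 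The only ingredient that is not entirely elementary is Brouwer's fixed point theorem itself; once the finite-dimensional Hilbert structure has been identified with a Euclidean one, there is no real obstacle, and alternatively one may simply invoke the reference \cite{temam1984} for this statement.
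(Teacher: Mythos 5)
Your proof is correct: the contradiction argument via the normalized map $-k\,P(\xi)/\|P(\xi)\|_X$ and Brouwer's fixed point theorem is exactly the classical proof of this lemma, and it is the argument given in the reference the paper cites (Temam, Lemma 1.4, Chapter II); the paper itself does not reproduce a proof. No gaps.
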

We apply this result with $X=\left(\R^{(2L+1)N} \right)^2$ where for $\Fbf=(f_1,f_2)^\top \in X$ and $\Gbf=(g_1,g_2)^\top\in X $, we set
\begin{equation*}
    \langle \Fbf,\Gbf\rangle_X=\langle f_1,g_1\rangle+\langle f_2,g_2\rangle,
\end{equation*}
$\langle \cdot,\cdot \rangle$ being the classical Euclidian dot product on $\R^{(2L+1)N}$.
Then, let us define
\begin{equation*}
    P:\begin{pmatrix} f_1\\f_2 \end{pmatrix}\mapsto
    \begin{pmatrix} \Delta x\Delta v(f_{1,ij}-f_{1,ij}^n)+\Delta t(\mathcal{F}_{\iph,j}-\mathcal{F}_{\imh,j})-\Delta x\Delta v\Delta t(\chi_{1,j}-\overbar{\rho}_{2,i}\overbar{f}_{1,ij})\\ 
    \Delta x\Delta v(f_{2,ij}-f_{2,ij}^n)+\Delta t(\mathcal{G}_{\iph,j}-\mathcal{G}_{\imh,j})-\Delta x\Delta v\Delta t(\chi_{1,j}-\overbar{\rho}_{1,i}\overbar{f}_{2,ij})
    \end{pmatrix},
\end{equation*}
where $\mathcal{F}$ and $\mathcal{G}$ are the numerical fluxes \eqref{flux_1_UW_d}--\eqref{flux_2_UW_d} computed with $f_{1,ij}$ and $f_{2,ij}$. We shall show that
\begin{equation*}
    \lla P(\Fbf), \Fbf \rra_X > 0, \quad \text{ for all } \Fbf=( f_1,f_2)^\top  \text{ such that }\left\|\Fbf\right\|_X=k,
\end{equation*}
where $k$ is taken large enough. Indeed, this scalar product splits into 3 terms, 
\begin{equation*}
    \lla P(\Fbf), \Fbf \rra_X=A_1+A_2+A_3,
\end{equation*} 
where
\begin{align*}
    A_1 &= \sum_{(i,j)\in\II\times\J} \Delta x\Delta v\left((f_{1,ij}-f_{1,ij}^n)f_{1,ij} + (f_{2,ij}-f_{2,ij}^n)f_{2,ij}\right),\\
    A_2 &= \Delta t\sum_{(i,j)\in\II\times\J} \left((\mathcal{F}_{\iph,j}-\mathcal{F}_{\imh,j})f_{1,ij} + (\mathcal{G}_{\iph,j}-\mathcal{G}_{\imh,j})f_{2,ij}\right),\\
    A_3 &= -\Delta t\sum_{(i,j)\in\II\times\J} \Delta x\Delta v\left((\chi_{1,j}-\overbar{\rho}_{2,i}\overbar{f_1}_{ij})f_{1,ij} + (\chi_{2,j}-\overbar{\rho}_{1,i}\overbar{f_2}_{ij})f_{2,ij}\right).
\end{align*}
Using the relation $a(a-b)\geq(a^2-b^2)/2$ one gets
\begin{equation}\label{estim_A1}
    A_1 \geq\frac{1}{2}\left( \left\| \Fbf\right\|_X^2 - \left\| \Fbf^n\right\|_X^2\right).
\end{equation}
Focusing on the first species, by definition of the numerical fluxes, one can use similar computations as in the proof of Lemma~\ref{lem_disc_microcoercivity}. In particular, using \eqref{flux_F_UW_sd1}, a discrete integration by parts and the periodic boundary conditions, one obtains
\begin{equation}\label{estim_A2}
    \begin{aligned}
      \sum_{i,j}(\mathcal{F}_{\iph,j}-\mathcal{F}_{\imh,j})f_{1,ij} &= -\sum_{i,j}\mathcal{F}_{\iph,j}(f_{1,i+1,j}-f_{1,ij})\\
        &= -\sum_{i\in\II, j>0} \Dv\, v_j f_{1,ij}\left(f_{1,i+1,j}-f_{1,i,j}\right)\\
        &\qquad- \sum_{i\in\II, j<0} \Dv\,v_jf_{1,i+1,j}(f_{1,i+1,j}-f_{1,ij})\\
        &\geq \frac{1}{2}\sum_{i,j}\Dv\, v_j\left((f_{1,ij})^2-(f_{1,i+1,j})^2\right)=0.
    \end{aligned}
\end{equation}

Now, by assumptions \eqref{hyp_chi_dis}, the velocity profiles are bounded:
\begin{equation*}
    \exists C_\chi>0 \quad\text{ such that }\quad \forall j\in\J, \quad 0\leq \chi_{1,j},\chi_{2,j}\leq C_\chi.
\end{equation*}
Then, by definition of the truncated quantities, there exists a constant $C>0$ such that
\begin{equation*}
    |\chi_{1,j}-\overbar{\rho}_{2,i}\overbar{f}_{1,ij}|\leq C\quad\text{and}\quad |\chi_{2,j}-\overbar{\rho}_{1,i}\overbar{f}_{2,ij}|\leq C.
\end{equation*}
Applying the Cauchy-Schwarz inequality on $A_3$, one gets
\begin{equation}\label{estim_A3}
    A_3\geq -\Delta t C \left\| \Fbf\right\|_X.
\end{equation}
Combining \eqref{estim_A1}, \eqref{estim_A2} and \eqref{estim_A3} yields the estimate
\begin{equation*}
    \lla P(\Fbf), \Fbf \rra_X\geq 
    \frac{1}{2}\left( \left\| \Fbf\right\|_X^2 - \left\| \Fbf^n\right\|_X^2\right) -\Delta t C \left\| \Fbf\right\|_X.
\end{equation*} 
The right hand side is a second order polynomial in $\left\| \Fbf\right\|_X$ with a positive leading coefficient. Since $\left\| \Fbf^n\right\|_X$ is a constant in this context, there exists $k>0$ such that if $\left\| \Fbf\right\|_X\geq k$ then 
\begin{equation*}
    \lla P(\Fbf), \Fbf \rra_X> 0.
\end{equation*} 
Finally, applying Lemma \ref{lem_existence_Navier}, one obtains existence of $\Fbf^{n+1}$ such that $P(\Fbf^{n+1})=0$, therefore ensuring existence of a solution to the truncated scheme \eqref{scheme_1_nonlin_trunc}-\eqref{scheme_2_nonlin_trunc}.

\begin{Lemma}\label{lem_bornes_trunc}
    If  $(f_{1,ij}^n,f_{2,ij}^n)^\top_{i\in\II,j\in\J}$ satisfies the estimates \eqref{estimLinf_f1_d}--\eqref{estimLinf_f2_d}, then any solution\\ $(f_{1,ij}^{n+1},f_{2,ij}^{n+1})^\top_{i\in\II,j\in\J}$ to the truncated scheme \eqref{scheme_1_nonlin_trunc}--\eqref{scheme_2_nonlin_trunc} also satisfies these estimates.
\end{Lemma}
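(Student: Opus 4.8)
The plan is to prove a discrete maximum principle for the truncated scheme by arguing by contradiction at an extremum of a suitably rescaled unknown, using the monotonicity of the upwind fluxes. The four inequalities in \eqref{estimLinf_f1_d}--\eqref{estimLinf_f2_d} are structurally identical, so I would carry out the upper bound $f_{1,ij}^{n+1}\le\rho_M\chi_{1,j}$ in detail and merely indicate the adjustments for the other three.

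To this end, set $g_{ij}\coloneq f_{1,ij}^{n+1}-\rho_M\chi_{1,j}$ and suppose, for contradiction, that $\max_{(i,j)\in\II\times\J}g_{ij}>0$; by finiteness of the grid the maximum is attained at some $(i_0,j_0)$. Since $\chi_{1,j}$ is independent of $i$, the index $i_0$ also maximizes $i\mapsto f_{1,ij_0}^{n+1}$, so $f_{1,i_0j_0}^{n+1}\ge f_{1,i_0-1,j_0}^{n+1}$ and $f_{1,i_0j_0}^{n+1}\ge f_{1,i_0+1,j_0}^{n+1}$. Writing the flux difference from \eqref{flux_F_UW_sd1} as
\[
\mathcal{F}^{n+1}_{i_0+\frac12,j_0}-\mathcal{F}^{n+1}_{i_0-\frac12,j_0}=\Dv\Bigl[v_{j_0}^+\bigl(f_{1,i_0j_0}^{n+1}-f_{1,i_0-1,j_0}^{n+1}\bigr)+v_{j_0}^-\bigl(f_{1,i_0+1,j_0}^{n+1}-f_{1,i_0j_0}^{n+1}\bigr)\Bigr],
\]
and using $v_{j_0}^+\ge 0$ together with $v_{j_0}^-\le 0$, both summands are nonnegative, hence the difference is $\ge 0$. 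Evaluating \eqref{scheme_1_nonlin_trunc} at $(i_0,j_0)$ thus gives $(f_{1,i_0j_0}^{n+1}-f_{1,i_0j_0}^n)/\Delta t\le \chi_{1,j_0}-\overbar{\rho}_{2,i_0}^{n+1}\,\overbar{f}_{1,i_0j_0}^{n+1}$.

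Next I would close the argument using the truncation. Since $g_{i_0j_0}>0$ means $f_{1,i_0j_0}^{n+1}>\rho_M\chi_{1,j_0}$, definition \eqref{def_f_trunc} forces $\overbar{f}_{1,i_0j_0}^{n+1}=\rho_M\chi_{1,j_0}$; moreover \eqref{def_g_trunc} always gives $\overbar{f}_{2,ij}\ge\rho_M^{-1}\chi_{2,j}$, so summing against $\Dv$ and using $\sum_{j\in\J}\Dv\,\chi_{2,j}=1$ from \eqref{hyp_chi_dis} yields $\overbar{\rho}_{2,i_0}^{n+1}\ge\rho_M^{-1}$. Therefore $\chi_{1,j_0}-\overbar{\rho}_{2,i_0}^{n+1}\overbar{f}_{1,i_0j_0}^{n+1}\le\chi_{1,j_0}-\rho_M^{-1}\rho_M\chi_{1,j_0}=0$, whence $f_{1,i_0j_0}^{n+1}\le f_{1,i_0j_0}^n\le\rho_M\chi_{1,j_0}$ by the assumed bound on $\Fbf^n$, contradicting $f_{1,i_0j_0}^{n+1}>\rho_M\chi_{1,j_0}$. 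This proves $f_{1,ij}^{n+1}\le\rho_M\chi_{1,j}$ for all $(i,j)$. The lower bound $f_{1,ij}^{n+1}\ge\rho_m\chi_{1,j}$ is obtained in the same way at a minimizer of $f_{1,ij}^{n+1}-\rho_m\chi_{1,j}$ (the flux difference is then $\le 0$, $\overbar{f}_{1,i_0j_0}^{n+1}=\rho_m\chi_{1,j_0}$ and $\overbar{\rho}_{2,i_0}^{n+1}\le\rho_m^{-1}$), and the two bounds on $f_2$ follow identically after swapping the roles of $\rho_m$ and $\rho_M^{-1}$ and using $\rho_m\le\overbar{\rho}_{1,i}^{n+1}\le\rho_M$, which again comes straight from \eqref{def_f_trunc} and $\sum_{j\in\J}\Dv\,\chi_{1,j}=1$.

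The one step requiring care is the flux estimate: one must observe that the $i$-independent weight $\chi_{1,j}$ lets an extremum of $g_{ij}$ be read as an extremum of $f_{1,ij}^{n+1}$ in the spatial index at fixed velocity, which is precisely what gives the upwind contribution the correct sign; the remainder is bookkeeping with the definitions of the truncated quantities. Once Lemma~\ref{lem_bornes_trunc} is established, an induction on $n$ starting from $\Fbf^\tin$ (which satisfies the bounds by hypothesis) shows that any solution of \eqref{scheme_1_nonlin_trunc}--\eqref{scheme_2_nonlin_trunc} satisfies \eqref{estimLinf_f1_d}--\eqref{estimLinf_f2_d}, so the truncations act trivially and it also solves \eqref{scheme_1_nonlin_d1}--\eqref{scheme_2_nonlin_d1}, completing the proof of Theorem~\ref{thm_bounds_existence}.
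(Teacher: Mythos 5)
Your proof is correct and follows essentially the same strategy as the paper's: evaluate the truncated scheme at an extremal point of $f_{k,ij}^{n+1}-\rho\,\chi_{k,j}$, use the monotonicity of the upwind flux to give the transport contribution the right sign, use the truncation together with $\sum_j\Dv\,\chi_{k,j}=1$ to sign the reaction term, and close with the bound on $\Fbf^n$. The only (cosmetic) difference is that the paper multiplies the equation by the negative part $(f_{1,ij}^{n+1}-\rho_m\chi_{1,j})^-$ and signs the three resulting terms for a general monotone two-point flux, whereas you argue by direct contradiction at the extremum; both are valid.
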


\begin{proof}
    Let us focus on proving that any solution to the nonlinear truncated scheme \eqref{scheme_1_nonlin_trunc}– \eqref{scheme_2_nonlin_trunc} satisfies
    \begin{equation*}
        f_{1,ij}^{n+1} \geq\rho_m\chi_{1,j}.
    \end{equation*}
    We start by setting $(i,j)\in\II\times\J$ such that 
    \begin{equation}\label{eq_ij_bounds}
        f_{1,ij}^{n+1} - \rho_m\chi_{1,j} = \underset{(k,l)\in\II\times\J}{\min}(f_{1,kl}^{n+1} - \rho_m\chi_{1,l}).
    \end{equation}
    Our aim is now to show that this quantity is nonnegative. To this end, we multiply the equation in \eqref{scheme_1_nonlin_trunc} corresponding to the fixed pair $(i,j)$ by
    \begin{equation}\label{eq_negPart}
        (f_{1,ij}^{n+1} - \rho_m\chi_{1,j})^-=\min(0,f_{1,ij}^{n+1} - \rho_m\chi_{1,j})\leq 0.
    \end{equation}
    It yields an expression of the form $B_1=B_2+B_3$ where
    \begin{align*}
        B_1 &= \Delta x\Delta v(f_{1,ij}^{n+1}-f_{1,ij}^{n})(f_{1,ij}^{n+1} - \rho_m\chi_{1,j})^-,\\
        B_2 &= -\Delta t(\mathcal{F}_{\iph,j}^{n+1}-\mathcal{F}_{\imh,j}^{n+1})(f_{1,ij}^{n+1} - \rho_m\chi_{1,j})^-,\\
        B_3 &=  \Delta t\Delta x\Delta v(\chi_{1,j}-\overbar{\rho}_{2,i}^{n+1}\overbar{f}_{1,ij}^{n+1})(f_{1,ij}^{n+1} - \rho_m\chi_{1,j})^-.
    \end{align*}
    Starting with $B_1$, we add and subtract $\rho_m\chi_{1,j}$ to obtain
    \begin{equation*}
        B_1 = \Delta x\Delta v( (f_{1,ij}^{n+1}-\rho_m\chi_{1,j}) - (f_{1,ij}^{n}-\rho_m\chi_{1,j}))(f_{1,ij}^{n+1} - \rho_m\chi_{1,j})^-
    \end{equation*}
    Then, under the hypothesis that $f^n_{1,ij}$ satisfies the bounds \eqref{estimLinf_f1_d}, and using the definition \eqref{eq_negPart} of the negative part, 
    \begin{equation}\label{estim_B1}
        B_1 \geq \Delta x\Delta v\,(f_{1,ij}^{n+1}-\rho_m\chi_{1,j})(f_{1,ij}^{n+1} - \rho_m\chi_{1,j})^- \geq 0.
    \end{equation}
    Next, we turn our attention to $B_3$. We need to consider two cases:
    \begin{itemize}
        \item If $f_{1,ij}^{n+1}\geq\rho_m\chi_{1,j}$ then $B_3=0$.
        \item Else, by definition of the truncated quantities \eqref{def_f_trunc} and \eqref{def_g_trunc}, one has $\overbar{f}_{1,ij}^{n+1} = \rho_m\chi_{1,j}$ and since the discrete velocity profiles have unit mass, $\overbar{\rho}_{2,i}^{n+1}\leq\rho_m\inv$. Therefore, 
        \begin{equation*}
            \chi_{1,j}-\overbar{\rho}_{2,i}^{n+1}\overbar{f}_{1,ij}^{n+1} \geq \chi_{1,j}-\frac{\rho_m}{\rho_m}\chi_{1,j}= 0.
        \end{equation*}
    \end{itemize}
    As a result, one gets
    \begin{equation}\label{estim_B3}
        B_3\leq 0.
    \end{equation}
    Finally, the sign of $B_2$ relies on the monotonicity of the numerical flux. More precisely, let us consider a monotone numerical flux in a general two-point approximation form: 
    \begin{equation*}
        \mathcal{F}_{\iph,j}^{n+1}=\varphi_j(f_{1,ij}^{n+1},f_{1,i+1,j}^{n+1}),
    \end{equation*}
    where $a\mapsto \varphi_j(a,\cdot)$ is assumed to be nondecreasing and $b\mapsto \varphi_j(\cdot,b)$ is assumed to be nonincreasing. Then, the balance of fluxes at cell $\mathcal{X}_i$ rewrites as
    \begin{align}\label{eq_fluxmono_varphi}
        \mathcal{F}_{\iph,j}^{n+1}-\mathcal{F}_{\imh,j}^{n+1}=\, &\varphi_j(f_{1,ij}^{n+1},f_{1,i+1,j}^{n+1}) - \varphi_j(f_{1,ij}^{n+1},f_{1,i,j}^{n+1})\nonumber\\
        & + \varphi_j(f_{1,ij}^{n+1},f_{1,ij}^{n+1}) - \varphi_j(f_{1,i-1,j}^{n+1},f_{1,ij}^{n+1}).
    \end{align}
    From our choice \eqref{eq_ij_bounds} of the pair $(i,j)$, we deduce that
    \begin{equation*}
        f_{1,ij}^{n+1} - \rho_m\chi_{1,j} \leq f_{1,kl}^{n+1} - \rho_m\chi_{1,l},\quad\forall (k,l)\in\II\times\J,
    \end{equation*}
    therefore in particular, for every $k\in\II$, $f_{1,ij}^{n+1}\leq f_{1,kj}^{n+1}$. Consequently, due to the monotonicity of the function $\varphi_j$, it yields
    \begin{equation}\label{estim_B2}
        B_2 \leq 0.
    \end{equation}

    Gathering \eqref{estim_B1}, \eqref{estim_B3} and \eqref{estim_B2} into $B_1=B_2+B_3$, and using the monotonicity of the upwind fluxes, we get that $B_1\leq0$, and consequently that
    \begin{equation*}
        f_{1,i,j}^{n+1} - \rho_m\chi_{1,j} \geq 0.
    \end{equation*}
    Lastly, since we chose $(i,j)$ satisfying \eqref{eq_ij_bounds}, 
    \begin{equation*}
        f_{1,i,j}^{n+1} \geq \rho_m\chi_{1,j},\quad \forall (i,j)\in\II\times\J.
    \end{equation*}
    The remaining bounds can then be obtained following the same steps.
\end{proof}

\begin{proof}[Proof of Theorem \ref{thm_bounds_existence}]
    We proceed by induction on $n$. The case $n=0$ is satisfied by assumption. Suppose now that there exists $(f_1^n,f_2^n)^\top$ satisfying the bounds \eqref{estimLinf_f1_d} and \eqref{estimLinf_f2_d}. Then, we showed the existence of $(f_1^{n+1},f_2^{n+1})^\top$ solution to the truncated scheme \eqref{scheme_1_nonlin_trunc}-\eqref{scheme_2_nonlin_trunc}, which satisfies \eqref{estimLinf_f1_d} and \eqref{estimLinf_f2_d} according to Lemma \ref{lem_bornes_trunc}. But then, we have
    \begin{align*}
        &\overbar{f_1}^{n+1} = f_1^{n+1},\quad \overbar{f_2}^{n+1} = f_2^{n+1},\\
        &\overbar{\rho_1}^{n+1} = \rho_1^{n+1},\quad \overbar{\rho_2}^{n+1} = \rho_2^{n+1},
    \end{align*}
    meaning that $(f_1^{n+1},f_2^{n+1})^\top$ is indeed a solution to the original nonlinear scheme \eqref{scheme_1_nonlin_d1}-\eqref{scheme_1_nonlin_d1}, which concludes the proof.
\end{proof}

\end{document}